\documentclass[12pt]{amsart} 
\usepackage[mathscr]{eucal} 
\usepackage{amsmath,amsfonts} 
\usepackage{amssymb}
\usepackage{color}
\usepackage{graphicx}
\usepackage{hyperref}
\parskip=\smallskipamount 
\newtheorem{theorem}{Theorem}[section] 
\newtheorem{proposition}[theorem]{Proposition} 
\newtheorem{corollary}[theorem]{Corollary} 
\newtheorem{lemma}[theorem]{Lemma} 
\theoremstyle{definition} 
\newtheorem{definition}[theorem]{Definition}

\newtheorem{remark}[theorem]{Remark}

\makeatletter 
\@addtoreset{equation}{section} 
\makeatother

\newcommand{\CC}{{\mathbb C}} 
\newcommand{\NN}{{\mathbb N}}

\newcommand{\RR}{{\mathbb R}} 
 
\newcommand{\cA}{{\mathcal A}} 
\newcommand{\cB}{{\mathcal B}} 
 
\newcommand{\cD}{{\mathcal D}} 
 
\newcommand{\cF}{{\mathcal F}} 
\newcommand{\cG}{{\mathcal G}} 
\newcommand{\cH}{{\mathcal H}} 
\newcommand{\cI}{{\mathcal I}}
\newcommand{\cJ}{{\mathcal J}} 
\newcommand{\cK}{{\mathcal K}} 
\newcommand{\cL}{{\mathcal L}} 
\newcommand{\cM}{{\mathcal M}}

\newcommand{\cR}{{\mathcal R}}

\newcommand{\cZ}{{\mathcal Z}} 

\newcommand{\bG}{\mathbf{G}}
\newcommand{\bH}{\mathbf{H}}

\newcommand{\fC}{\mathfrak{C}}

\newcommand{\ra}{\rightarrow} 
\newcommand{\ol}{\overline}

\let\phi=\varphi

\newcommand{\de}{\mathrm{d}}

\newcommand{\lin}{\operatorname{Lin}}
\newcommand{\supp}{\operatorname{supp}}
\newcommand{\Lloc}{\cB_{\mathrm{loc}}}

\newcommand{\esssup}{\operatorname{ess-sup}}
\newcommand{\esssupp}{\operatorname{ess-supp}}

\newcommand{\nr}[1]{\vspace{0.1ex}\noindent\hspace*{12mm}\llap{\textup{(#1)}}} 

\date{\today}
 
\begin{document} 
\title[Abelian Locally von Neumann Algebras]{Functional Models of \\[1ex]
Abelian Locally von Neumann Algebras and\\[1ex] Direct Integrals of Locally Hilbert Spaces}

\begin{abstract} We obtain a functional model for an arbitrary Abelian locally von Neumann algebra acting on a 
representing locally Hilbert space under the assumption that the index directed set is sequentially finite, hence 
countable, in terms of locally essentially bounded functions on 
strictly inductive systems of measure spaces, which can be viewed as the reduction theory of this kind of 
operator algebras. 
Then, we single out the concept of a direct integral of locally Hilbert spaces and 
the concepts of locally decomposable and locally diagonlisable operators and we show that these form locally 
von Neumann algebras that are commutant one to each other. Finally, we show that any Abelian locally von 
Neumann algebra which, in addition, acts on separable representing locally Hilbert spaces, is spatially 
isomorphic with the Abelian locally 
von Neumann algebra of all locally diagonlisable operators on a certain direct integral of 
locally Hilbert spaces with respect to a certain strictly inductive system of locally finite measure spaces on standard Borel spaces.
    
\end{abstract}
\subjclass[2020]{Primary 46H35, 47L40; Secondary 46C99, 28C15}
\keywords{Locally Hilbert space, functional model, inductive limit of measure spaces, Abelian locally von Neumann algebra,
direct integral of locally Hilbert spaces.}
\author[A.~Gheondea]{Aurelian Gheondea}
\address{Institute of Mathematics of the Romanian Academy, Calea Grivi\c tei 21, Bucharest, Romania \emph{and} Bilkent University, Ankara, 6800 T\"urkiye}
\email{a.gheondea@imar.ro \emph{and} aurelian@fen.bilkent.edu.tr}
\author[C.J.~Kulkarni]{Chaitanya J. Kulkarni}
\address{Theoretical Statistics and Mathematics Unit, Indian Statistical Institute, Delhi Centre, 7 S. J.
S. Sansanwal Marg, New Delhi 110016, India}
\email{chaitanyakulkarni58@gmail.com}
\author[S.K.~Pamula]{Santhosh Kumar Pamula}
\address{Department of Mathematical Sciences, Indian Institute of Science Education and Research (IISER) Mohali, Knowledge City, S.A.S Nagar, Punjab 140306, Punjab, India}
\email{santhoshkp@iisermohali.ac.in}
\maketitle

\section*{Introduction}

Locally $C^*$-algebras, or pro $C^*$-algebras, or $LMC$-algebras, are projective limits of $C^*$-algebras 
in the category of 
locally convex $*$-algebras and have been considered and investigated 
by G.~Allan \cite{Allan}, C.~Apostol \cite{Apostol},  A.~Inoue in \cite{Inoue}, W.B.~Arveson \cite{Arveson},
K.~Schm\"udgen \cite{Schmudgen}, and  N.C.~Phillips \cite{Phillips}. These operator algebras were useful in 
connection with the theory of Hilbert modules over locally convex $*$-algebras initiated by the works of 
A.~Mallios \cite{Mallios}, in connection to dual algebraic structures related to free products by 
D.V.~Voiculescu \cite{Voiculescu}, and in connection to KK-theory by J.~Weidner \cite{Weidner}. 
With respect to operator
representations of locally $C^*$-algebras, locally Hilbert spaces have been considered by A.~Inoue \cite{Inoue} 
and, from a different point of view, operator theory in locally Hilbert spaces was considered by A.A.~Dosiev 
\cite{Dosi1}, who called
them quantised domains of Hilbert spaces, and by D.J.~Karia and Y.M.~Parmar in \cite{KariaParmar} and
A.~Gheondea \cite{Gheondea1}. The spectral theory of locally normal operators was performed by 
A.~Gheondea, \cite{Gheondea2}, \cite{Gheondea3}, and \cite{Gheondea4}, in particular, the three forms of the 
spectral theorem for locally 
normal operators have been obtained. On the one hand, locally normal operators generate Abelian locally 
von Neumann algebras, which are operator algebra types of locally $W^*$-algebras, see
M.~Fragoulopoulou, \cite{Fragoulopoulou} and M.~Joi\c ta, \cite{Joita1}, \cite{Joita2}, and also, A.A.~Dosi 
\cite{Dosi3} who studied Abelian $W^*$-algebras, hence a natural question is how these results fit into the 
theory of Abelian locally von Neumann algebras. On the other hand, in \cite{Gheondea4}, a connection of 
operator theory on locally Hilbert spaces with the analysis on fractal sets, cf.\ J.~Kigami \cite{Kigami}, differential 
equations on fractal sets, cf.\ 
R.S.~Strichartz \cite{Strichartz}, and the spectral theory on the Hata tree-like self-similar set, cf.\  M.~Hata 
\cite{Hata} and A.~Brzoska \cite{Brzoska}, was established and motivations for the technical assumptions that 
the index directed set is sequentially finite have been provided. Again, these raise the question on the structure 
of Abelian locally von Neumann algebras.

Locally $C^*$-algebras are necessary for certain applications, as substantiated by the works cited 
above, due to the fact that, when compared to $C^*$-algebras, they cover the case of unbounded operators. 
For example, the category of Abelian locally $C^*$-algebras is equivalent, by a generalisation of Gelfand's 
duality, 
with the opposite of the category of colimits of compact Hausdorff spaces in the category of Hausdorff spaces 
and continuous maps, see R.~El Harti and G.~Luk\'acs \cite{ElHartiLukacs}.
Locally von Neumann algebras are considered to be less useful, when compared to locally $C^*$-algebras, 
because of the existing concept of unbounded operators that are affiliated to von Neumann algebras, which 
leads to the impression that von Neumann algebras have an intrinsic power of handling unbounded operators. 
However, the structure of a locally von Neumann algebra is heavily influenced by the underlying index directed 
set, e.g.\ see examples 3.17 through 3.19 in \cite{Gheondea4}, hence its centre may be rather complicated,    
while the affiliated operators may not catch the whole characteristics. In addition, the 
suitable framework to deal with locally von Neumann algebras is that in which, the underlying locally Hilbert 
space is representing (commutative), which has the consequence that the 
locally von Neumann algebra has a rich centre. In this case, a reduction theory of locally von Neumann algebra 
becomes highly nontrivial and certain canonical forms are interesting objects. All these require a representation 
theory of Abelian locally von Neumann algebras with direct integrals of locally Hilbert spaces and then, a 
reduction theory of locally von Neumann algebras with respect to their centres. In view of the proposed approach 
to a spectral theory of differential operators on fractal sets, cf.\ \cite{Kigami}, \cite{Strichartz}, \cite{Brzoska}, a 
Laplace operator operator on the Hata tree-like self-similar set $X$
should be a selfadjoint operator in a certain locally von Neumann algebra on $X$, when viewed as 
the strictly inductive system of measure spaces associated to the tree structure of $X$, see Example~3.19 in 
\cite{Gheondea4}.

The aim of this article is to investigate Abelian locally von Neumann algebras from the points of view of 
functional models and 
their  representations as algebras of locally diagonalisable operators on certain direct integrals of locally Hilbert 
spaces. In a certain way, these can be viewed as generalisations of the functional models for locally normal 
operators obtained in \cite{Gheondea4}, a reduction theory for these operator algebras, 
and also as a counter-part of the representations of Abelian locally von Neumann algebras obtained in 
\cite{Dosi3}. Our approach is based, on the one hand, on known results on functional models of Abelian von 
Neumann algebras and their  representations on direct integrals of Hilbert spaces in the sense of J.~von 
Neumann \cite{vonNeumann} and, on the other hand, on strictly inductive systems of measure spaces used in 
\cite{Gheondea3} and \cite{Gheondea4}. In this enterprise, 
there are a few challenges that we had to face. First, the technical assumption on the underlying locally Hilbert 
space to be representing, commutative in the sense of \cite{Dosi1}, is necessary if we want to obtain spatial 
isomorphism. Second, for the general functional model, we had to impose the technical 
assumption on the index directed set 
to be sequentially finite, similar to \cite{Gheondea4}, which is justified by an induction process that 
we had to use. Third, we had to find the suitable 
definition of the direct integral of locally Hilbert spaces, which we did in 
Subsection~\ref{ss:bblhs}, through the concept of Borel bundles of locally Hilbert spaces, and the right concepts 
of locally 
decomposable and locally diagonalisable operators, which we did in Subsection~\ref{ss:ldldo}. 
In this respect, first a very helpful idea was the use of the Borel bundles of Hilbert spaces in 
the presentation of P.~Muhly \cite{Muhly}
and based on ideas of A.~Ramsey \cite{Ramsey} and second, the concept of strictly inductive 
systems of measure spaces allows us to calculate the completion of a direct integral of locally Hilbert spaces as 
an explicit direct integral of Hilbert space, see 
Theorem~\ref{t:dintcomp}, in the special case when the index set is a countable directed set. This technical 
condition does not 
provide new limitations of our results since, anyhow, the main results are obtained under the technical condition 
that the index directed set is sequentially finite and hence countable.

In addition to the obstructions made explicit above, there is one issue that requires special care, in 
connection with the requirement, in the definition of locally Hilbert spaces, that we have inclusions, and not 
simply embeddings, and, consequently, in connection with the definition of strictly inductive limits of measurable spaces which 
requires that we have inclusions and not simply embeddings. These make technical complications in view of the 
fact that, in the functional model, we have orthogonal 
sums of countable collections of locally Hilbert spaces and 
direct integrals of locally Hilbert spaces. To a certain extent, these complications are alleviated by the 
assumption on the index directed set to be sequentially finite, but more is needed. 
On the one hand, when orthogonal direct sums are used, we have to single out a special type of these 
orthogonal sums which makes them unique, and for this we use the idea of a reproducing kernel Hilbert space, 
see Subsection~\ref{ss:ods}. On the other hand, when considering locally Hilbert spaces produced by strictly 
inductive systems of measure spaces, we have to use functions defined on the whole inductive limit, see 
Subsection~\ref{ss:sisms}.

The main results of this article are presented in Section~\ref{s:dilhs}. First, in Subsecion~\ref{ss:bblhs} we get 
the definition of a Borel bundle of separable locally Hilbert spaces which provides direct integrals of locally 
Hilbert spaces, then in Proposition~\ref{p:dintrep} we show that these locally Hilbert spaces are always 
representing and in Theorem~\ref{t:dintcomp} we
calculate their completions to a direct integral Hilbert space, under the assumption that the index directed 
set is countable. In the next 
subsection, we define the concepts of locally decomposable and locally diagonalisable operators and in 
Theorem~\ref{t:locdecdeg} we show that these classes of locally bounded operators yield locally von Neumann 
algebras such 
that each of it is the commutant of the other. Under the assumption that the index ordered set is sequentially 
finite, we obtain in 
Theorem~\ref{t:avn1} the functional model of Abelian locally von Neumann algebras acting on representing 
locally Hilbert spaces
on pseudo-concrete locally Hilbert spaces, which can be viewed as the reduction for these kind of operator 
algebras and has the 
advantage that no separability condition is needed. Finally, in Theorem~\ref{t:avn2} we show that if, in addition, 
the locally Hilbert spaces 
are separable, we obtain that any Abelian locally von Neumann algebra is spatially isomorphic to the locally von 
Neumann algebra of locally diagonalisable operators on a direct integral locally Hilbert space, with respect to a 
strictly inductive system of locally finite measure spaces which are standard Borel spaces.

In this article we use many concepts and results that need a review and, because of that, in Section~\ref{s:npr} we recall the 
necessary concepts of locally Hilbert spaces, their orthogonal direct sums, locally $C^*$-algebras, locally bounded operators, 
locally von Neumann algebras, and strictly inductive systems of measure spaces, 
while in Section~\ref{s:rdihs} we review the concepts of Borel bundles of Hilbert spaces, direct integral of Hilbert spaces, and the 
von Neumann algebras of decomposable and diagonalisable operators, respectively.

\section{Notation and Preliminary Results on Operator Algebras on\\ Locally Hilbert Spaces} \label{s:npr}

In this section, we fix the notation and recall some basic definitions 
and results, mainly from \cite{Gheondea1}, \cite{Gheondea2}, \cite{Gheondea3}, and \cite{Gheondea4}, and a few others, in 
order to provide a setup for locally von Neumann algebras.

\subsection{Locally Hilbert Spaces.}\label{ss:lhs} 
A \emph{locally Hilbert space} is an \emph{inductive limit}
\begin{equation}\label{e:injlim} 
\cH=\varinjlim\limits_{\lambda\in\Lambda}\cH_\lambda=\bigcup_{\lambda\in\Lambda}
\cH_\lambda,
\end{equation} of a 
\emph{strictly inductive system of 
Hilbert spaces} $(\cH_\lambda)_{\lambda\in
\Lambda}$, that is,
\begin{itemize}
\item[(lhs1)] $(\Lambda,\leq)$ is a directed set; 
\item[(lhs2)] $(\cH_\lambda, \langle\cdot,\cdot\rangle_{\cH_\lambda})_{\lambda\in\Lambda}$ 
is a net of Hilbert spaces;
\item[(lhs3)] for each $\lambda,\mu\in\Lambda$ with $\lambda\leq \mu$  we have 
$\cH_\lambda\subseteq\cH_\mu$; 
\item[(lhs4)] for each $\lambda,\mu\in\Lambda$ with $\lambda\leq\mu$ the inclusion map 
$J_{\mu,\lambda}\colon \cH_\lambda\ra\cH_\mu$ is isometric, that is, 
\begin{equation}\langle x,y\rangle_{\cH_\lambda}=\langle x,y\rangle_{\cH_\mu},\mbox{ for all }
x,y\in\cH_\lambda.\end{equation}
\end{itemize} 
For each $\lambda\in\Lambda$, letting $J_\lambda\colon\cH_\lambda\ra\cH$ be the inclusion
of $\cH_\lambda$ into the vector space $\cH=\bigcup\limits_{\lambda\in\Lambda}\cH_\lambda$, the \emph{inductive limit
topology} on $\cH$ is the strongest one which makes the linear maps $J_\lambda$ 
continuous for all $\lambda\in\Lambda$. 

On the vector space $\cH$, a canonical inner product $\langle\cdot,\cdot\rangle_\cH$ can be defined as follows: 
\begin{equation}\label{e:lip}
\langle h,k\rangle_\cH=\langle h,k\rangle_{\cH_\lambda},\quad h,k\in\cH,
\end{equation} where $\lambda\in\Lambda$ is any index for which $h,k\in\cH_\lambda$. 
With notation as before, it
follows that the definition of the inner product as in \eqref{e:lip}
is correct and, for each $\lambda\in\Lambda$, 
the inclusion map $J_\lambda\colon(\cH_\lambda,\langle\cdot,\cdot\rangle_{\cH_\lambda})\ra
(\cH,\langle\cdot,\cdot\rangle_{\cH})$ is isometric. This implies that, letting $\|\cdot\|_\cH$ 
denote the norm induced by the inner product $\langle\cdot,\cdot\rangle_\cH$ on $\cH$, the
\emph{norm topology} on $\cH$ is weaker than the inductive limit topology of $\cH$. 
Since the norm topology is Hausdorff, it follows that the inductive limit topology on $\cH$ 
is Hausdorff as well. In the following, we let $\widetilde\cH$ denote a Hilbert space completion of the
inner product space $(\cH,\langle\cdot,\cdot\rangle).$ 

\begin{remark} Two important observations are in order. 

(a) First, a locally Hilbert space $\cH$ is not an inductive limit in the category of 
Hilbert spaces but in the more general category of locally convex spaces. However, its completion 
$\widetilde \cH$ is an inductive limit in the category of Hilbert spaces. 

(b) Second, there is the concept of \emph{quantised domain} of a 
Hilbert space, cf.\ \cite{Dosi1}, which is a triple $(\cG,\{\cD_p\}_{p\in S},\cD)$, where $\cG$ is a Hilbert space and 
$\{\cD_p\}_{p\in S}$ is a system of Hilbert subspaces of $\cG$, for some directed set $S$, 
subject to the conditions that, whenever $p,q\in S$ and $p\leq q$ it follows that $\cD_p\subseteq \cD_q$ and 
$\cD=\bigcup\limits_{p\in S}\cD_p$ is dense in $\cG$. Any locally Hilbert space
$\cH=\varinjlim\limits_{\lambda\in\Lambda}\cH_\lambda$ gives rise to a quantised domain 
$(\widetilde\cH,\{\cH_\lambda\}_{\lambda\in\Lambda},\cH)$. Conversely, for any quantised domain 
$(\cG,\{\cD_p\}_{\in S},\cD)$, the net $(\cD_p)_{p\in S}$ is a strictly inductive system of Hilbert spaces and its 
inductive limit $\cD:=\varinjlim\limits_{s\in S} \cD_p$ is a locally Hilbert space such that $\cD$ is dense in $\cG$.
However, the concept of quantised domain $(\cG,\{\cD_p\}_{\in S},\cD)$  of a Hilbert 
space is more restrictive because the emphasis is on the Hilbert space $\cG$, which is 
fixed, while in the concept of a locally Hilbert space $\cH=\varinjlim\limits_{\lambda\in\Lambda}\cH_\lambda$, the 
emphasis is on the strictly inductive system of Hilbert spaces $(\cH_\lambda)_{\lambda\in\Lambda}$ and its 
completion $\widetilde \cH$ is one of the many possible choices. This distinction is important for our approach.
\end{remark}
Given a locally Hilbert space $\cH=\varinjlim\limits_{\lambda\in\Lambda}\cH_\lambda$, for any $\lambda\in\Lambda$ 
there exists a unique projection $P_\lambda$ from $\cH$ to its subspace $\cH_\lambda$, which is 
Hermitian in the sense that $\langle P_\lambda h,k\rangle_{\cH}=\langle h,P_\lambda k\rangle_{\cH}$, for all 
$h,k\in \cH$. For example,\ see Lemma~3.1 in \cite{Gheondea1}.
\begin{definition}\label{d:rep}
A locally Hilbert space $\cH=\varinjlim\limits_{\lambda\in\Lambda}\cH_\lambda $ is called \emph{representing}
if, letting $P_\lambda\colon\cH\ra\cH$ denote the unique Hermitian projection of $\cH$ 
onto $\cH_\lambda$, for arbitrary 
$\lambda\in\Lambda$,  we have
\begin{itemize}
\item[(lch5)] $P_\lambda P_\nu=P_\nu P_\lambda$ for all $\lambda,\nu\in\Lambda$.
\end{itemize}
In this case, the strictly inductive system of Hilbert spaces $(\cH_\lambda)_{\lambda\in\Lambda}$ is called
\emph{representing} as well.
\end{definition}
A quantised domain $(\cG,\{\cD_p\}_{\in S},\cD)$ with the property that its underlying locally Hilbert space 
$\cD=\varinjlim\limits_{s\in S}\cD_p$ is representing is called \emph{commutative} in \cite{Dosi2}. For the proof of the following result see Lemma~3.2 in \cite{Gheondea4}.

\begin{lemma}\label{l:repof} A locally Hilbert space is representing if and only if, for any 
$\lambda,\nu\in\Lambda$ and any $\epsilon\in\Lambda$ such that $\lambda\leq\epsilon$ and 
$\nu\leq\epsilon$ (since $\Lambda$ is directed, we always can find such an $\epsilon$), letting 
$P_{\lambda,\epsilon},P_{\nu,\epsilon}\in\cB(\cH_\epsilon)$ 
be the orthogonal projections onto $\cH_\lambda$ and $\cH_\nu$, respectively,  we have
\begin{equation}\label{e:pale}
P_{\lambda,\epsilon}P_{\nu,\epsilon}=P_{\nu,\epsilon}P_{\lambda,\epsilon}.
\end{equation}
\end{lemma}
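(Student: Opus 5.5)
The key observation is that the global Hermitian projection $P_\lambda$ on $\cH$ is determined by the local ones: for $\lambda\leq\epsilon$, the restriction of $P_\lambda$ to $\cH_\epsilon$ is exactly $P_{\lambda,\epsilon}$. I would check this first. For $h\in\cH_\epsilon$, the vector $P_{\lambda,\epsilon}h$ lies in $\cH_\lambda$, and $h-P_{\lambda,\epsilon}h$ is orthogonal to $\cH_\lambda$ with respect to $\langle\cdot,\cdot\rangle_{\cH_\epsilon}$. By (lhs4) and \eqref{e:lip} this inner product agrees with $\langle\cdot,\cdot\rangle_\cH$, so the orthogonality holds in $\cH$ as well. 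On the other hand, $P_\lambda h$ is characterised by the same two properties: it lies in $\cH_\lambda$, and $h-P_\lambda h$ is orthogonal to $\cH_\lambda$. Hermitianity gives this, since for $k\in\cH_\lambda$ we have $\langle h-P_\lambda h,k\rangle=\langle h,k\rangle-\langle h,P_\lambda k\rangle=0$, using $P_\lambda k=k$ for $k\in\cH_\lambda$. Because orthogonal decompositions with respect to a subspace are unique in an inner product space, $P_\lambda h=P_{\lambda,\epsilon}h$. In particular, $P_\lambda$ maps $\cH_\epsilon$ into $\cH_\lambda\subseteq\cH_\epsilon$.

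\emph{Necessity.} Assume $\cH$ is representing and fix $\lambda,\nu\leq\epsilon$. Both $P_\lambda$ and $P_\nu$ leave $\cH_\epsilon$ invariant and restrict there to $P_{\lambda,\epsilon}$ and $P_{\nu,\epsilon}$. Restricting the identity $P_\lambda P_\nu=P_\nu P_\lambda$ to $\cH_\epsilon$ therefore gives \eqref{e:pale}.

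\emph{Sufficiency.} Assume \eqref{e:pale} holds for every such triple $\lambda,\nu,\epsilon$. Let $h\in\cH$ and $\lambda,\nu\in\Lambda$. Since $\Lambda$ is directed, choose $\epsilon$ with $\lambda,\nu\leq\epsilon$ and $h\in\cH_\epsilon$, which is possible because $h$ lies in some $\cH_\mu$. Then $P_\nu h=P_{\nu,\epsilon}h\in\cH_\epsilon$, so $P_\lambda P_\nu h=P_{\lambda,\epsilon}P_{\nu,\epsilon}h$, and symmetrically $P_\nu P_\lambda h=P_{\nu,\epsilon}P_{\lambda,\epsilon}h$. These agree by \eqref{e:pale}, and since $h$ was arbitrary we get (lch5).

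The only step that needs care is the identification $P_\lambda|_{\cH_\epsilon}=P_{\lambda,\epsilon}$. It rests on uniqueness of the Hermitian projection, as in Lemma~3.1 of \cite{Gheondea1}, and on the consistency of inner products guaranteed by (lhs4). This holds even though $\cH$ is not complete: $\cH_\lambda$ is complete, so the orthogonal decomposition of any vector of $\cH_\epsilon$ with respect to $\cH_\lambda$ exists, and uniqueness holds in any inner product space. The rest of the argument is formal.
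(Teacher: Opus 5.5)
Your proof is correct and complete. The key identification $P_\lambda|_{\cH_\epsilon}=P_{\lambda,\epsilon}$ for $\lambda\leq\epsilon$ reduces commutation on $\cH$ to commutation on each $\cH_\epsilon$, with directedness supplying a common upper bound $\epsilon$ whose $\cH_\epsilon$ contains $h$; that identification uses only the defining properties of $P_\lambda$, not its uniqueness. The paper gives no proof here and only refers to Lemma~3.2 of \cite{Gheondea4}; your restriction argument is the natural proof of that result and needs no further ingredient.
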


\subsection{Orthogonal Direct Sums of Locally Hilbert Spaces.}\label{ss:ods}
We need to perform certain operations with locally Hilbert spaces, such as orthogonal direct sums. This operation 
needs a careful treatment and, because of that, we first clarify the sense in which we have to consider 
orthogonal direct sums of Hilbert spaces, following a particular case of linearisations and reproducing kernel 
Hilbert spaces of operator valued kernels as in \cite{GheondeaTilki}. 
Let $\cJ$ be an arbitrary set and consider a bundle
of Hilbert spaces $\bH=\{\cH_j\}_{j\in \cJ}$. We consider 
\begin{equation*}\cJ\ast \bH := \bigsqcup_{j\in\cJ}\cH_j=\{(j,h)\mid j\in\cJ,\ h\in \cH_j\}
\end{equation*}
and the canonical map $\pi\colon \cJ\ast \bH\ra \cJ$, $\pi(j,h)=j$ for all $(j,h)\in\cJ\ast \bH$. A \emph{section} of
the bundle $\cJ\ast \bH$ is a right inverse of $\pi$, that is, $f\colon \cJ\ra \cJ\ast \bH$ subject to the condition that,
for any $j\in\cJ$ we have $f(j)=(j,h)$, with $h\in\cH_j$. In order to simplify the notation, by the identification of the fiber 
$\{(j,h)\mid h\in \cH_j\}$ with $\cH_j$, we can write simply that $f(j)\in\cH_j$ for all $j\in\cJ$.
Let $\cF_{\bH}(\cJ)$ denote the vector space of all sections of the bundlel $\cJ\ast\bH$. 

An \emph{orthogonal direct sum} of $\bH$ is, 
by definition, a pair $(\cH;\iota)$ subject to the following conditions.
\begin{itemize}
\item[(ds1)] $\cH$ is a Hilbert space.
\item[(ds2)] $\iota=\{\iota_j\}_{j\in \cJ}$ is a family of linear isometries $\iota_j\colon 
\cH_j\ra\cH$ such that $\iota_l^*\iota_j=0$ for all $j\neq l$.
\item[(ds3)] $\{\iota_j h\mid j\in \cJ,\ h\in\cH_j\}$ is total in $\cH$.
\end{itemize}
\begin{remark}\label{r:dirsum} Orthogonal direct sums of any bundle of Hilbert spaces $\cJ\ast\bH$ exist and 
are unique, modulo a certain unitary operator. 
More precisely, let 
$\cH=\bigoplus\limits_{j\in \cJ}\cH_j$ denote the collection of all sections $(h_j)_{j\in \cJ}\in\cF_\bH(\cJ)$ 
subject to the condition that
\begin{equation*}\sum_{j\in \cJ}\|h_j\|_{\cH_j}^2<\infty,
\end{equation*}
with convergence in the sense of summability.
A standard argument of Hilbert spaces shows that if $(h_j)_{j\in J}\in\cH$ then $h_j=0$ 
for all less an at most countable set of $j\in \cJ$. Then, if $h=(h_j)_{j\in \cJ}$ and 
$k=(k_j)_{j\in \cJ}$ are arbitrary sections in $\cH$ then, letting
\begin{equation}\label{e:laheka}
\langle h,k\rangle_0=\sum_{j\in J}\langle h_j,k_j\rangle_{\cH_j},
\end{equation}
correctly defines an inner product in $\cH$ with respect to which it becomes a Hilbert 
space and the summation is actually an absolutely convergent series.

Let $j\in \cJ$ and $\iota_j\colon \cH_j\ra \cH$ be the natural inclusion map defined for each $h \in \cH_{j}$ as, 
$\iota_j h=\widehat h \in \mathcal{F}_{\cH}(\mathcal{J})$ where, 
\begin{equation*}
\widehat{h}(l): = \left\{\begin{array}{cc}
h, & \mbox{if}\; l = j;\\
0, & \mbox{otherwise.}
\end{array}\right.
\end{equation*}
Then $\iota_j$ is an isometry for each $j \in \mathcal{J}$. Further, it follows from \eqref{e:laheka} that
$\iota_l^*\iota_j=0$, whenever $l\neq j$, and $\{\iota_j h\mid j\in \cJ,\ h\in\cH_j\}$ is 
total in $\cH$.

Suppose $(\cK;\kappa)$ is another direct sum of the bundle $\{\cH_j\}_{j\in \cJ}$ of Hilbert 
spaces and let $U\colon \cH\ra\cK$ be defined as follows: for each $j\in \cJ$ and $h\in\cH_j$,  
let $U\iota_j h=\kappa_j h$ and then extend it by linearity to the span of 
$\{\iota_j h\mid  j\in \cJ,\ h\in\cH_j\}$. It is easy to see that $U$ is isometric, densely 
defined in $\cH$ and the range is dense in $\cK$. Hence $U$ extends uniquely to a unitary 
operator (again denoted by $U$) $U\colon \cH\ra\cK$ such that $U\iota_j=\kappa_j$, for all $j\in \cJ$.
\end{remark}
\begin{remark}\label{r:iovak} Given a bundle of Hilbert spaces $\bH=\{\cH_j\}_{j\in \cJ}$,
we observe that a direct sum of $\bH$ is nothing else than a  ``minimal 
linearisation" of the ``identity operator $\bH$ valued kernel", 
in the sense of Subsection 2.1 in \cite{GheondeaTilki}, that is,
the kernel $I$ defined by $I(j,j)=I_{\cH_j}$, the identity operator on $\cH_j$, 
and $I(j,l)=0$ for all $j\neq l$. 
\end{remark}

As Remark~\ref{r:dirsum} shows, the definition provided by the conditions (d1) through (d3) is not enough for
uniqueness. We need some definitions that will be used more intensively in Subsection~\ref{ss:bbhs}.
A \emph{special orthogonal direct sum} of $\bH$ is, 
by definition, a Hilbert space $\cR$ subject to the following conditions.
\begin{itemize}
\item[(sd1)] $\cR\subseteq \cF_\bH(\cJ)$ with all algebraic operations and it is a Hilbert space.
\item[(sd2)] For all $j\in\cJ$ and all $h\in\cH_j$, the section $\widehat h\in\cR$, with notation as 
in Remark~\ref{r:dirsum}.
\item[(sd3)] For all $f\in\cR$, $j\in\cJ$ and $h\in\cH_j$, we have $\langle f(j),h\rangle_{\cH_j}
=\langle f,\widehat h\rangle_{\cR}$. 
\end{itemize} 

\begin{remark}\label{r:repo}
(a) The definition of a special orthogonal direct sum of a bundle $\bH$ of Hilbert spaces is actually a 
``reproducing kernel Hilbert space", in the sense specified in Subsection~2.1 in \cite{GheondeaTilki}, 
associated to the ``identity operator $\bH$ valued kernel" defined as in 
Remark~\ref{r:iovak}. Note that the condition (sd3) is called the \emph{reproducing property}.

(b) It is easy to see that, if $\cR$ is a special orthogonal direct sum of $\bH$, then the following property holds.
\begin{itemize}
\item[(sd4)] The linear span of $\left\{\widehat h\mid j\in\cJ\mbox{ and }h\in\cH_j\right\}$ is dense in $\cR$.
\end{itemize}

(c) If $\cR$ is a special orthogonal direct sum of $\bH$, for arbitrary $j\in\cJ$, letting $\iota_j(h)=\widehat h$ 
for each $h\in\cH_j$, then the pair $(\cR,\iota)$ is an orthogonal direct sum of $\bH$.
\end{remark}

\begin{proposition}\label{p:resal} Given any bundle $\bH=\{\cH_j\}_{j\in\cJ}$ of Hilbert spaces, there exists a unique special orthogonal direct sum of $\bH$.
\end{proposition}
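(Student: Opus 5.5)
Existence will come from the concrete space $\cR:=\bigoplus_{j\in\cJ}\cH_j$ of Remark~\ref{r:dirsum}, viewed as the set of square-summable sections in $\cF_\bH(\cJ)$ with the inner product $\langle\cdot,\cdot\rangle_0$ of \eqref{e:laheka}. Condition (sd1) holds because this set is a linear subspace of $\cF_\bH(\cJ)$ with pointwise operations, and Remark~\ref{r:dirsum} shows it is a Hilbert space. For (sd2), the section $\widehat h$ has exactly one nonzero component, so $\sum_l\|\widehat h(l)\|^2=\|h\|^2<\infty$ and $\widehat h\in\cR$. For (sd3), every term $l\neq j$ in \eqref{e:laheka} vanishes, so
\begin{equation*}
\langle f,\widehat h\rangle_0=\langle f(j),h\rangle_{\cH_j}
\end{equation*}
for all $f\in\cR$, $j\in\cJ$ and $h\in\cH_j$.

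For uniqueness, I will take any special orthogonal direct sum $\cR'$ of $\bH$ and show that $\cR'=\cR$ as sets of sections with the same inner product. The first step is to recover the inner product of $\cR'$ on the $\widehat h$'s. If $h\in\cH_j$ and $k\in\cH_l$, the reproducing property (sd3) in $\cR'$ gives
\begin{equation*}
\langle \widehat h,\widehat k\rangle_{\cR'}=\langle \widehat h(l),k\rangle_{\cH_l}=\delta_{jl}\langle h,k\rangle_{\cH_j}.
\end{equation*}
So on $\cD:=\lin\{\widehat h\mid j\in\cJ,\ h\in\cH_j\}$ the inner products of $\cR'$ and $\cR$ agree. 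This subspace $\cD$ consists of finitely supported sections and lies in both spaces.

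The second step is to identify $\cR'$ with $\cR$ elementwise. By Remark~\ref{r:repo}(b), property (sd4), $\cD$ is dense in both $\cR'$ and $\cR$. Take $f\in\cR'$ and choose $f_n\in\cD$ with $f_n\to f$ in $\cR'$.
\begin{itemize}
\item Since the norms agree on $\cD$, the sequence $(f_n)$ is also Cauchy in $\cR$, so it converges there to some $g\in\cR$.
\item The reproducing property holds in both spaces, so point evaluations are continuous: $f_n(j)\to f(j)$ and $f_n(j)\to g(j)$ weakly in each $\cH_j$. Strongly also works, since $|\langle f(j),h\rangle|\le\|f\|\|h\|$ gives $\|f(j)\|\le\|f\|$.
\item Hence $f=g$ pointwise as sections, and so $\cR'\subseteq\cR$ with $\|f\|_{\cR'}=\|f\|_{\cR}$.
\end{itemize}
The symmetric argument gives $\cR\subseteq\cR'$, and polarization gives equality of the inner products. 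This is the usual uniqueness of a reproducing kernel Hilbert space, in line with Remark~\ref{r:repo}(a).

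The main obstacle is exactly this passage from "same inner product on a dense set" to "same space of sections". Isometric completions are only unique up to unitary equivalence, and it is the reproducing property that forces them to coincide literally as subsets of $\cF_\bH(\cJ)$. The key point to check is continuity of the evaluations $f\mapsto f(j)$, which lets limits in either Hilbert topology be computed pointwise. Everything else is routine bookkeeping.
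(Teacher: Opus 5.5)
Your proposal is correct and follows the paper's own argument. Existence comes from the concrete space $\bigoplus_{j\in\cJ}\cH_j$ of Remark~\ref{r:dirsum}. Uniqueness comes from the facts that the inner products agree on the common dense span of the sections $\widehat h$, and that norm convergence in any special orthogonal direct sum implies pointwise convergence of sections. You just make explicit the steps the paper leaves implicit: the bound $\|f(j)\|_{\cH_j}\le\|f\|$ obtained from the reproducing property, and the transfer of Cauchy sequences between the two spaces.
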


\begin{proof} We leave to the reader to verify that the Hilbert space $\cH=\bigoplus\limits_{j\in\cJ}\cH_j$ defined in
Remark~\ref{r:dirsum} is a special orthogonal sum of the bundle $\cJ\ast \bH$. For uniqueness, let us observe 
that the vector space $\cF_\bH(\cJ)$ has a Hausdorff locally convex topology given by the seminorms 
$p_j(f)=\|f(j)\|_{\cH_j}$, $f\in\cF_\bH(\cJ)$, $j\in\cJ$. Let $\cR$ be an arbitrary special direct sum of the bundle
$\cJ\ast\bH$ and note that, with respect to these specified topologies, the embedding 
$\cR\hookrightarrow \cF_\bH(\cJ)$ is continuous. Equivalently, this means that convergence in the norm of $\cR$ 
implies pointwise convergence.

In view of the property (sd2), it is easy to observe that the vector space defined by
$\lin\left\{\widehat h\mid h\in \cH_j,\ j\in\cJ\right\}$ is contained in both $\cH$ and $\cR$. Further, it follows from 
Remark~\ref{r:repo}.(b) that it is dense in both of $\cH$ and $\cR$. Also, in view of the reproducing property (sd3), the norms 
$\|\cdot\|_{\cH}$ and $\|\cdot\|_{\cR}$ coincide on  $\lin\left\{\widehat h\mid h\in \cH_j,\ j\in\cJ\right\}$. Since both $\cH$ 
and $\cR$ are complete and their norm topologies are stronger than the pointwise convergence topology of 
$\cF_\bH(\cJ)$, it follows that $\cH=\cR$.
\end{proof}

We need the following proposition that was pointed out in \cite{Gheondea4}. We provide a proof 
because it clarifies some issues although, with the specifications provided before, it is almost clear.

\begin{proposition}\label{p:dirsum} Let $\Lambda$ be a directed set and let $\cJ$ be an arbitrary index set. 
Assume that, for each $j\in\cJ$, there is given a strictly inductive system of Hilbert spaces 
$(\cH_{\lambda,j})_{\lambda\in\Lambda}$. Then $(\bigoplus\limits_{j\in\cJ} \cH_{\lambda,j})_{\lambda\in\Lambda}$,
the net of special orthogonal direct sums,
is a strictly inductive system of Hilbert spaces. In addition, if for any $j\in\cJ$, the strictly inductive system of
Hilbert spaces $(\cH_{\lambda,j})_{\lambda\in\Lambda}$ is representing, then 
$(\bigoplus\limits_{j\in\cJ} \cH_{\lambda,j})_{\lambda\in\Lambda}$
is a representing strictly inductive system of Hilbert spaces as well.
\end{proposition}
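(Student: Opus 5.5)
The plan is to work throughout with the concrete special orthogonal direct sums $\cK_\lambda:=\bigoplus_{j\in\cJ}\cH_{\lambda,j}$, realised as in Remark~\ref{r:dirsum}: the square-summable sections $(h_j)_{j\in\cJ}$ with $h_j\in\cH_{\lambda,j}$. By Proposition~\ref{p:resal}, this realisation is the unique special orthogonal direct sum of the bundle $\{\cH_{\lambda,j}\}_{j\in\cJ}$, so nothing is lost by working with it. The point of using the special sums is that every $\cK_\lambda$ is then a subset of one fixed vector space, the set of all sections $j\mapsto h_j\in\bigcup_{\lambda}\cH_{\lambda,j}$, so inclusions make sense literally and not only up to embeddings.

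For (lhs3), take $\lambda\leq\mu$. Each $\cH_{\lambda,j}\subseteq\cH_{\mu,j}$, with equal norms by (lhs4) for the $j$-th system. Hence any square-summable section with $h_j\in\cH_{\lambda,j}$ is a section with $h_j\in\cH_{\mu,j}$, and $\sum_j\|h_j\|^2_{\cH_{\mu,j}}=\sum_j\|h_j\|^2_{\cH_{\lambda,j}}<\infty$. So $\cK_\lambda\subseteq\cK_\mu$ as sets, and the linear operations agree since both are pointwise. For (lhs4), the inner product \eqref{e:laheka} on $\cK_\mu$, restricted to $\cK_\lambda$, is $\sum_j\langle h_j,k_j\rangle_{\cH_{\mu,j}}=\sum_j\langle h_j,k_j\rangle_{\cH_{\lambda,j}}$, so the inclusion is isometric. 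Conditions (lhs1) and (lhs2) are immediate, and the first assertion follows.

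For the representing property I will use Lemma~\ref{l:repof}. Fix $\lambda,\nu\leq\epsilon$ and let $P^{(j)}_{\lambda,\epsilon}$ be the orthogonal projection of $\cH_{\epsilon,j}$ onto $\cH_{\lambda,j}$. I claim that the orthogonal projection $P_{\lambda,\epsilon}$ of $\cK_\epsilon$ onto $\cK_\lambda$ acts coordinatewise:
\[
(P_{\lambda,\epsilon}h)_j=P^{(j)}_{\lambda,\epsilon}h_j,\qquad j\in\cJ.
\]
To prove the claim, first note that the map $Q$ defined by the right-hand side sends $\cK_\epsilon$ into $\cK_\lambda$, because $\|P^{(j)}_{\lambda,\epsilon}h_j\|\leq\|h_j\|$ keeps the section square summable. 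Next, $Q$ is idempotent, selfadjoint with respect to \eqref{e:laheka}, and equal to the identity on $\cK_\lambda$. Its range is therefore all of $\cK_\lambda$, so $Q=P_{\lambda,\epsilon}$.

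The same formula holds for $\nu$. By the representing hypothesis and Lemma~\ref{l:repof} applied to each $j$, we have $P^{(j)}_{\lambda,\epsilon}P^{(j)}_{\nu,\epsilon}=P^{(j)}_{\nu,\epsilon}P^{(j)}_{\lambda,\epsilon}$. Commutation holds coordinatewise, hence $P_{\lambda,\epsilon}P_{\nu,\epsilon}=P_{\nu,\epsilon}P_{\lambda,\epsilon}$, and Lemma~\ref{l:repof} gives the second assertion.

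The only step needing care is the identification of $P_{\lambda,\epsilon}$ with the coordinatewise projection: one must check that $Q$ is well defined into $\cK_\lambda$ and that its range is exactly $\cK_\lambda$. The rest is bookkeeping, made possible by the fact that the special direct sums are genuine subsets of a common space of sections.
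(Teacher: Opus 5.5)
Your proposal is correct and follows essentially the same route as the paper. You realise the special direct sums concretely as square-summable sections, so the inclusions $\bigoplus_j\cH_{\lambda,j}\subseteq\bigoplus_j\cH_{\epsilon,j}$ are literal and isometric. You then identify the orthogonal projection onto $\bigoplus_j\cH_{\lambda,j}$ with the coordinatewise sum $\bigoplus_j P_{\lambda,\epsilon,j}$, so commutation passes from each $j$ via Lemma~\ref{l:repof}. Your check that the coordinatewise map is a selfadjoint idempotent with range exactly $\bigoplus_j\cH_{\lambda,j}$ fills in a step the paper only asserts.
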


\begin{proof} In view of the uniqueness of the special orthogonal direct sum $\bigoplus\limits_{j\in\cJ} \cH_{\lambda,j}$, 
see Proposition~\ref{p:resal}, and the assumption that $(\cH_{\lambda,j})_{\lambda\in\Lambda}$ is a strictly inductive limit 
for each $j\in\cJ$, for 
each $j\in\cJ$, it is easy to see that, whenever $\lambda\leq \epsilon$, we have the isometric
inclusion $\bigoplus\limits_{j\in\cJ} \cH_{\lambda,j}\subseteq \bigoplus\limits_{j\in\cJ} \cH_{\epsilon,j}$.

Assume that, for any $j\in\cJ$, the strictly inductive system of
Hilbert spaces $(\cH_{\lambda,j})_{\lambda\in\Lambda}$ is representing. Let $\lambda,\nu$ be arbitrary in 
$\Lambda$ and let $\epsilon\in\Lambda$ be such that $\lambda,\nu\leq \epsilon$. For each $j$, the orthogonal
projections $P_{\lambda,\epsilon,j},P_{\nu,\epsilon,j}\in\cB(\cH_{\epsilon,j})$, on $\cH_{\lambda,j}$ and 
$\cH_{\nu,j}$, respectively, commute and observe that,
\begin{equation*}
P_{\lambda,\epsilon}:=\bigoplus_{j\in \cJ} P_{\lambda,\epsilon,j},\quad P_{\nu,\epsilon}
:=\bigoplus_{j\in\cJ} P_{\nu,\epsilon,j},
\end{equation*}
are the orthogonal projections of $\bigoplus\limits_{j\in\cJ} \cH_{\epsilon,j}$ onto $\bigoplus\limits_{j\in\cJ} \cH_{\lambda,j}$
and $\bigoplus_{j\in\cJ} \cH_{\nu,j}$, respectively, and they commute.
\end{proof}

With notation and assumptions as in Proposition~\ref{p:dirsum}, if 
$\cH_j=\varinjlim\limits_{\lambda\in\Lambda}\cH_{\lambda,j}$ for each $j\in\cJ$, we define the 
\emph{orthogonal direct sum} of the locally Hilbert spaces $\cH_j$ by
\begin{equation*}
\bigoplus_{j\in\cJ}\cH_j:=\varinjlim_{\lambda\in\Lambda} \bigoplus_{j\in\cJ} \cH_{\lambda,j}.
\end{equation*}
If all locally Hilbert spaces $\cH_j$ are representing, then the locally Hilbert space $\bigoplus\limits_{j\in\cJ}\cH_j$ is
representing.

\subsection{Locally Bounded Operators}\label{ss:lbo}
Let $\cH=\varinjlim\limits_{\lambda\in \Lambda}\cH_\lambda$ and 
$\cK=\varinjlim\limits_{\lambda\in \Lambda}\cK_\lambda$ be two locally Hilbert spaces generated by strictly
inductive systems of Hilbert spaces 
$\left((\cH_\lambda)_{\lambda\in\Lambda},(J_{\nu,\lambda}^\cH)_{\lambda\leq\nu}\right)$ 
and, respectively, 
$\left((\cK_\lambda)_{\lambda\in\Lambda},(J_{\nu,\lambda}^\cK)_{\lambda\leq\nu}\right)$,
indexed on the same directed set $\Lambda$. Let $J^{\mathcal{H}}_{\lambda}\colon \mathcal{H}_{\lambda} \to \mathcal{H}$ and 
$J^{\mathcal{K}}_{\lambda}\colon \mathcal{K}_{\lambda} \to \mathcal{K}$ be inclusion maps for each $\lambda \in \Lambda$.  
A linear map $T\colon \cH\ra\cK$ is called a \emph{locally bounded operator} if it has the following properties.
\begin{itemize}
\item[(lbo1)] There exists a net of operators $(T_\lambda)_{\lambda\in\Lambda}$, 
with $T_\lambda\in\cB(\cH_\lambda,\cK_\lambda)$ and such that 
$TJ^\cH_\lambda=J_\lambda^\cK T_\lambda$, for all $\lambda\in\Lambda$. 
\item[(lbo2)] The net of operators $(T_\lambda^*)_{\lambda\in\Lambda}$ 
is coherent as well, that is, $T_\nu^* J_{\nu,\lambda}^\cK=J_{\nu,\lambda}^\cH T_\lambda^*$,
for all $\lambda,\nu\in \Lambda$ such that $\lambda\leq\nu$.
\end{itemize}
We denote by $\Lloc(\cH,\cK)$ the collection of all locally bounded operators 
$T\colon\cH\ra\cK$. We observe that $\Lloc(\cH,\cK)$ is a vector space and that
there is a canonical embedding
\begin{equation}\label{e:lbl} 
\Lloc(\cH,\cK)\subseteq\varprojlim_{\lambda\in\Lambda}\cB(\cH_\lambda,\cK_\lambda).
\end{equation}

The correspondence between $T\in\Lloc(\cH,\cK)$ and the net of operators 
$(T_\lambda)_{\lambda\in\Lambda}$ as in (lbo1) and (lbo2) is unique. Given $T\in\Lloc(\cH,\cK)$,
for arbitrary $\lambda\in\Lambda$ we have $T_\lambda h=Th$, for all $h\in\cH_\lambda$, with the
observation that $Th\in\cK_\lambda$. Conversely, if $(T_\lambda)_{\lambda\in\Lambda}$ is a 
net of operators $T_\lambda\in\cB(\cH_\lambda,\cK_\lambda)$ satisfying (lbo2) then, letting
$Th=T_\lambda h$ for arbitrary $h\in\cH$, where $\lambda\in\Lambda$ is such that 
$h\in\cH_\lambda$, it follows that $T$ is a locally bounded operator: 
this definition is correct by (lbo2). We will use the notation 
\begin{equation}\label{e:tpl} 
T=\varprojlim\limits_{\lambda\in\Lambda}T_\lambda.\end{equation}

The following result tells us which additional properties a net of bounded operators 
must have in order to produce a locally bounded operator, see Proposition~1.12 in \cite{Gheondea3}.

\begin{proposition}\label{p:lbo2} Let
$(T_\lambda)_{\lambda\in\Lambda}$ be a net with $T_\lambda\in\cB(\cH_\lambda,\cK_\lambda)$ ,
for all $\lambda\in\Lambda$. The following assertions are equivalent.

\nr{1} For every $\lambda,\nu\in\Lambda$ such that $\lambda\leq\nu$ we have
\begin{equation}\label{e:temuha}
T_\nu|_{\cH_\lambda}=J_{\nu,\lambda}^\cK T_\lambda,\mbox{ and }
T_\nu P_{\lambda,\nu}^\cH=P_{\lambda,\nu}^\cK T_\nu,\end{equation} 
where $P^\cH_{\lambda,\nu}$ is the orthogonal projection of $\cH_\nu$ onto its subspace 
$\cH_\lambda$.

\nr{2} For every $\lambda,\nu\in\Lambda$ such that $\lambda\leq\nu$, with respect to the
decompositions
\begin{equation*}\cH_\nu=\cH_\lambda\oplus(\cH_\nu\ominus\cH_\lambda),\quad
\cK_\nu=\cK_\lambda\oplus(\cK_\nu\ominus\cK_\lambda),
\end{equation*} the operator $T_\nu$ has the following block matrix representation
\begin{equation}\label{e:temule} 
T_\nu=\left[\begin{matrix} T_\lambda & 0 \\ 0 & T_{\lambda,\nu}\end{matrix}\right],
\end{equation}
for some bounded linear operator $T_{\lambda,\nu}\colon\cH_\nu\ominus\cH_\lambda\ra
\cK_\nu\ominus\cK_\lambda$.

\nr{3} There exists an operator $T\in\Lloc(\cH,\cK)$ such that 
$T|_{\cH_\lambda}=J^\cK_\lambda T_\lambda$ for all $\lambda\in\Lambda$.

In addition, if any of these assertions holds (hence all of them hold), 
the operator $T\in\Lloc(\cH,\cK)$ as in \emph{(3)} is uniquely determined by 
$(T_\lambda)_{\lambda\in\Lambda}$ and
\begin{equation*} T=\varprojlim_{\lambda\in\Lambda}T_\lambda.
\end{equation*}
\end{proposition}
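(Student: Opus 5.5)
We prove the cycle of implications (1)$\Rightarrow$(2)$\Rightarrow$(3)$\Rightarrow$(1), working throughout inside the Hilbert spaces $\cH_\nu$ and $\cK_\nu$. Uniqueness of $T$ is handled at the end.

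For (1)$\Rightarrow$(2), fix $\lambda\leq\nu$. The first identity in \eqref{e:temuha} says that $T_\nu$ maps $\cH_\lambda$ into $\cK_\lambda$ and agrees there with $T_\lambda$. This gives the $(1,1)$ entry $T_\lambda$ and shows that the $(2,1)$ entry vanishes. The commutation relation $T_\nu P^\cH_{\lambda,\nu}=P^\cK_{\lambda,\nu}T_\nu$ gives that $T_\nu(I-P^\cH_{\lambda,\nu})=(I-P^\cK_{\lambda,\nu})T_\nu$. Hence $T_\nu$ maps $\cH_\nu\ominus\cH_\lambda$ into $\cK_\nu\ominus\cK_\lambda$, so the $(1,2)$ entry vanishes. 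We then set $T_{\lambda,\nu}$ to be the restriction of $T_\nu$ to $\cH_\nu\ominus\cH_\lambda$; it is bounded because $T_\nu$ is.

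For (2)$\Rightarrow$(3), the block form \eqref{e:temule} gives $T_\nu|_{\cH_\lambda}=J^\cK_{\nu,\lambda}T_\lambda$. We define $Th=T_\lambda h$ for $h\in\cH_\lambda$. To see this is well defined, take $h\in\cH_\lambda\cap\cH_\mu$, choose $\nu\geq\lambda,\mu$, and observe that $T_\lambda h=T_\nu h=T_\mu h$. Linearity of $T$ then follows by passing to a common upper index, and this verifies (lbo1). Taking adjoints of \eqref{e:temule} gives
\begin{equation*}
T_\nu^*=\left[\begin{matrix} T_\lambda^* & 0\\ 0 & T_{\lambda,\nu}^*\end{matrix}\right]
\end{equation*}
with respect to the swapped decompositions. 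Hence $T_\nu^*J^\cK_{\nu,\lambda}=J^\cH_{\nu,\lambda}T_\lambda^*$, which is (lbo2). So $T\in\Lloc(\cH,\cK)$ and $T|_{\cH_\lambda}=J^\cK_\lambda T_\lambda$.

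For (3)$\Rightarrow$(1), the hypothesis $T|_{\cH_\lambda}=J^\cK_\lambda T_\lambda$ immediately gives the first identity in \eqref{e:temuha}. This direction is the main point, since it is where (lbo2) must be converted into the projection identity. By (lbo2), $T_\nu^*$ maps $\cK_\lambda$ into $\cH_\lambda$, i.e.\ $T_\nu^*P^\cK_{\lambda,\nu}=P^\cH_{\lambda,\nu}T_\nu^*P^\cK_{\lambda,\nu}$. Taking adjoints yields $P^\cK_{\lambda,\nu}T_\nu=P^\cK_{\lambda,\nu}T_\nu P^\cH_{\lambda,\nu}$. On the other hand, the first identity gives $T_\nu P^\cH_{\lambda,\nu}=P^\cK_{\lambda,\nu}T_\nu P^\cH_{\lambda,\nu}$. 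Comparing the two equations gives $T_\nu P^\cH_{\lambda,\nu}=P^\cK_{\lambda,\nu}T_\nu$.

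Finally, uniqueness holds because $\cH=\bigcup_\lambda\cH_\lambda$, so $T$ is determined on all of $\cH$ by its restrictions $T|_{\cH_\lambda}=J^\cK_\lambda T_\lambda$. By the correspondence described before \eqref{e:tpl}, this gives $T=\varprojlim_{\lambda}T_\lambda$.
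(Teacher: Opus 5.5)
Your proof is correct. The paper gives no proof of this proposition and only cites Proposition~1.12 in \cite{Gheondea3}, so your cycle (1)$\Rightarrow$(2)$\Rightarrow$(3)$\Rightarrow$(1) serves as a self-contained argument, and it is the natural direct verification.

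The only substantive step is (3)$\Rightarrow$(1), and you handle it correctly. You combine two facts:
\begin{itemize}
\item $P^\cK_{\lambda,\nu}T_\nu=P^\cK_{\lambda,\nu}T_\nu P^\cH_{\lambda,\nu}$, which is the adjoint form of the invariance of $\cK_\lambda$ under $T_\nu^*$ supplied by (lbo2);
\item $T_\nu P^\cH_{\lambda,\nu}=P^\cK_{\lambda,\nu}T_\nu P^\cH_{\lambda,\nu}$, which is the invariance of $\cH_\lambda$ under $T_\nu$.
\end{itemize}

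There is one point you should state explicitly in that step. Condition (lbo2) constrains the net that (lbo1) attaches to $T$, but you apply it to the given net $(T_\lambda)_{\lambda\in\Lambda}$ from (3). This is legitimate because each $J^\cK_\lambda$ is injective. Hence (lbo1) forces the witnessing net to satisfy $T_\lambda h=Th$ for $h\in\cH_\lambda$, so it is exactly the net in (3).
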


As a consequence of the previous proposition, one can introduce the adjoint operation on 
$\Lloc(\cH,\cK)$. Let $T=\varprojlim\limits_{\lambda\in\Lambda}T_\lambda\in\Lloc(\cH,\cK)$ and
hence the net $(T_\lambda)_{\lambda\in\Lambda}$ satisfies the conditions \eqref{e:temuha}. Then,
consider the net of bounded operators $(T_\lambda^*)_{\lambda\in\Lambda}$, 
$T_\lambda^*\in\cB(\cH_\lambda,\cK_\lambda)$ for all $\lambda\in\Lambda$ and observe that,
for all $\lambda,\nu\in\Lambda$ with $\nu\geq \lambda$, we have
\begin{equation}
T_\nu^*|_{\cK_\lambda}=J_{\nu,\lambda}^\cH T_\lambda^*\mbox{ and }
T_\nu^* P_{\lambda,\nu}^\cK=P_{\lambda,\nu}^\cH T_\nu^*,\end{equation}
hence, by Proposition~\ref{p:lbo2}, there exists a unique operator $T^*\in\Lloc(\cK,\cH)$ such that
\begin{equation}\label{e:temuhadj}
T^*=\varprojlim_{\lambda\in\Lambda}T_\lambda^*.
\end{equation}

Given three locally Hilbert spaces $\cH=\varinjlim\limits_{\lambda\in\Lambda}\cH_\lambda$,
$\cK=\varinjlim\limits_{\lambda\in\Lambda}\cK_\lambda$, and 
$\cG=\varinjlim\limits_{\lambda\in\Lambda}\cG_\lambda$, indexed on the same directed set $\Lambda$, we
observe that the composition of locally bounded operators yields locally bounded operators,
more precisely, whenever $T\in\Lloc(\cH,\cK)$ and $S\in\Lloc(\cK,\cG)$ it follows that 
$ST\in\Lloc(\cH,\cG)$ and the usual algebraic properties as associativity and distributivity with
respect to addition and multiplication hold. Moreover, for each $\lambda\in\Lambda$, 
we have
\begin{equation*} (ST)_\lambda =S_\lambda T_\lambda
\end{equation*}
and hence
\begin{equation}\label{e:setep} ST=\varprojlim_{\lambda\in\Lambda} S_\lambda T_\lambda.
\end{equation}
In addition, composition of locally bounded
operators behaves as usually with respect to the adjoint operation, that is,
\begin{equation}\label{e:dadj}
(ST)^*=T^*S^*,\quad T\in\Lloc(\cH,\cK),\ S\in\Lloc(\cK,\cG).
\end{equation} 

A locally bounded operator 
$T=\varprojlim\limits_{\lambda\in\Lambda}T_\lambda\in\Lloc(\cH,\cK)$ and the concept of composition of locally 
bounded operators, it is natural to call it \emph{locally isometric} if 
$T^*T=I_{\cH}$. It is easy to see that $T$ is locally isometric if and only if 
$T_\lambda\in\cB(\cH_\lambda,\cK_\lambda)$ is isometric for all $\lambda\in \Lambda$. 
Also, $T$ is called \emph{locally unitary} if both $T$ and $T^*$ are locally isometric and then, it is clear that $T$ 
is locally unitary if and only if $T_\lambda\in\cB(\cH_\lambda,\cK_\lambda)$ is unitary for all 
$\lambda\in\Lambda$.

Also, any locally bounded operator $T\colon\cH\ra\cK$ 
is continuous with respect to the inductive limit topologies of $\cH$ and $\cK$. However, 
in general, a locally bounded operator $T\colon\cH\ra\cK$
may not be continuous with respect to the norm topologies of $\cH$ and $\cK$. An arbitrary
linear operator $T\in\Lloc(\cH,\cK)$ is continuous with respect to the norm topologies of $\cH$ 
and $\cK$ if and only if, with respect to the notation as in (lbo1) and (lbo2), 
$\sup_{\lambda\in\Lambda}\|T_\lambda\|_{\cB(\cH_\lambda,\cK_\lambda)}<\infty$. In this case,
the operator $T$ uniquely extends to an operator $\widetilde T\in\cB(\widetilde\cH,\widetilde\cK)$, 
where
$\widetilde\cH$ and $\widetilde\cK$ are the Hilbert space completions of $\cH$ and $\cK$, respectively, 
and $\|\widetilde T\|
=\sup\limits_{\lambda\in\Lambda}\|T_\lambda\|_{\cB(\cH_\lambda,\cK_\lambda)}$.

As a consequence of \eqref{e:lbl}, $\Lloc(\cH,\cK)$ has a natural locally convex topology
induced by the projective limit locally convex topology of 
$\varprojlim\limits_{\lambda\in\Lambda}\cB(\cH_\lambda,\cK_\lambda)$, more precisely, 
generated by the family of seminorms $\{q_\lambda\}_{\lambda\in\Lambda}$ defined by
\begin{equation}\label{e:qmt}
q_\nu(T)=\|T_\nu\|_{\cB(\cH_\nu,\cK_\nu)},\quad T
=(T_\lambda)_{\lambda\in\Lambda}\in\varprojlim_{\lambda\in\Lambda}
\cB(\cH_\lambda,\cK_\lambda),\quad \lambda\in\Lambda.
\end{equation}
With respect to the embedding \eqref{e:lbl}, $\Lloc(\cH,\cK)$ is closed
in $\varprojlim\limits_{\lambda\in\Lambda}\cB(\cH_\lambda,\cK_\lambda)$, hence complete.
               
The locally convex space $\Lloc(\cH,\cK)$ can be organised as a projective limit of locally 
convex spaces, in view of \eqref{e:lbl}, more precisely, letting 
$\pi_\nu\colon \varprojlim\limits_{\lambda\in\Lambda} \cB(\cH_\lambda,\cK_\lambda)\ra 
\cB(\cH_\nu,\cK_\nu)$, for $\nu\in\Lambda$, 
be the canonical projection, then
\begin{equation}\label{e:pll}
\Lloc(\cH,\cK)=\varprojlim_{\lambda\in\Lambda} \pi_\lambda (\Lloc(\cH,\cK)).
\end{equation}

\subsection{Locally $C^*$-Algebras}\label{ss:lca}
A $*$-algebra $\cA$ is called a \emph{locally $C^*$-algebra}
if it has a complete Hausdorff locally convex topology which is induced by a family of 
$C^*$-seminorms, that is,
seminorms $p$ with the property $p(a^*a)=p(a)^2$ for all $a\in\cA$, see \cite{Inoue}. 
Any $C^*$-seminorm $p$ has also the properties $p(a^*)=p(a)$ and $p(ab)\leq p(a)p(b)$ 
for all $a,b\in\cA$, cf.\ \cite{Sebestyen}. 
Locally $C^*$-algebras have been also called \emph{$LMC^*$-algebras} 
\cite{Schmudgen}, \emph{$b^*$-algebras} \cite{Allan}, \emph{pro $C^*$-algebras} 
\cite{Voiculescu}, \cite{Phillips}, and \emph{multinormed $C^*$-algebras} \cite{Dosi1}. 

If $\cA$ is a locally $C^*$-algebra, let $S(\cA)$ denote the collection of all continuous 
$C^*$-seminorms and note that $S(\cA)$ is a directed set with respect to the partial order defined as
follows:
$p\leq q$ if $p(a)\leq q(a)$ for all $a\in\cA$. If $p\in S(\cA)$, then 
\begin{equation}\label{e:cisp} 
\cI_p=\{a\in\cA\mid p(a)=0\}
\end{equation} is a
closed two sided $*$-ideal of $\cA$ and $\cA_p=\cA/\cI_p$ becomes a $*$-algebra with respect 
to the $C^*$-norm $\|\cdot\|_p$ induced by $p$, more precisely,
\begin{equation}\label{e:tlp} 
\|a+\cI_p\|_p=p(a),\quad a\in\cA.
\end{equation} 
It is proven by C.~Apostol in \cite{Apostol} that the $C^*$-norm  is complete and hence $\cA_p$ is a $C^*$-
algebra already and no completion is needed, as assumed in \cite{Schmudgen} and \cite{Phillips}.
Letting $\pi_p\colon \cA\ra\cA_p$ denote the 
canonical projection, for any $p,q\in S(\cA)$ such that $p\leq q$, there exists a canonical 
$*$-epimorphism of $C^*$-algebras $\pi_{p,q}\colon \cA_q\ra \cA_p$ such that 
$\pi_p=\pi_{p,q}\circ \pi_q$, with respect to which 
$(\cA_p)_{p\in S(\cA)}$ becomes a projective system of $C^*$-algebras such that 
\begin{equation}\label{e:alim}
\cA=\varprojlim\limits_{p\in S(\cA)} \cA_p,\end{equation}
see \cite{Schmudgen}, \cite{Phillips}. This projective limit is taken
in the category of locally convex $*$-algebras and hence all the morphisms are continuous
$*$-morphisms of locally convex $*$-algebras.

Letting $b(\cA)=\bigl\{a\in\cA\mid \sup\limits_{p\in S(\cA)} p(a)<+\infty\bigr\}$, it follows that 
$\|a\|=\sup\limits_{p\in S(\cA)} p(a)$ is a $C^*$-norm on the $*$-algebra $b(\cA)$ and, with respect to
this norm, $b(\cA)$ is a $C^*$-algebra, dense in $\cA$, see \cite{Apostol}. The elements
of $b(\cA)$ are called \emph{bounded}.

The $C^*$-algebra $b(\cA)$, together with the system of $*$-morphisms $\{\pi_p\}_{p\in S(\cA)}$, where, with an 
abuse of notation, we denote by the same symbol $\pi_p\colon b(\cA)\ra \cA_p$ 
the canonical projection  $\pi_p\colon \cA\ra\cA_p=\cA/\cI_p$ restricted to $b(\cA)$,
is the projective limit of the projective system of 
$C^*$-algebras $(\cA_p,\pi_p)_{p\in S(\cA)}$ in the category $\fC^*$ of all $C^*$-algebras. 

\subsection{The Locally $C^*$-Algebra $\Lloc(\cH)$.} \label{ss:lcsab}
Let $\cH=\varinjlim\limits_{\lambda\in\Lambda}\cH_\lambda$
be a locally Hilbert space and $\Lloc(\cH)$ be the locally convex space of all locally 
bounded operators $T\colon \cH\ra\cH$,
as in Subsection~\ref{ss:lbo}. Note that
$\Lloc(\cH)$ has a natural product and a natural involution $*$, 
with respect to which it is a $*$-algebra, see \eqref{e:temuhadj} and \eqref{e:setep}.
For each $\mu\in\Lambda$, consider the $C^*$-algebra 
$\cB(\cH_\mu)$ of all bounded linear operators in $\cH_\mu$ and 
$\pi_\mu\colon\Lloc(\cH)\ra\cB(\cH_\mu)$ be the canonical map
\begin{equation}\label{e:pimu}
\pi_\mu(T)=T_\mu,\quad T=\varprojlim\limits_{\lambda\in\Lambda}T_\lambda\in\Lloc(\cH).
\end{equation} Let $\Lloc(\cH_\mu)$ denote the range of $\pi_\mu$ and note that it is 
a $C^*$-subalgebra of $\cB(\cH_\mu)$.
It follows that $\pi_\mu\colon\Lloc(\cH)\ra\Lloc(\cH_\mu)$ is a $*$-morphism of $*$-algebras 
and, for each 
$\lambda,\mu\in \Lambda$ with $\lambda\leq\mu$, 
there is a unique $*$-epimorphism of $C^*$-algebras 
$\pi_{\lambda,\mu}\colon \Lloc(\cH_\mu)\ra\Lloc(\cH_\lambda)$, such that 
$\pi_\lambda=\pi_{\lambda,\mu}\pi_\mu$. More precisely,
$\pi_{\lambda,\mu}$ is the compression of $\cH_\mu$ to $\cH_\lambda$,
\begin{equation}\label{e:plmh} 
\pi_{\lambda,\mu}(S)=J_{\mu,\lambda}^*SJ_{\mu,\lambda},\quad S\in\Lloc(\cH_\mu).
\end{equation}
Then $\left((\Lloc(\cH_\lambda))_{\lambda\in\Lambda},
(\pi_{\lambda,\mu})_{\lambda,\mu\in\Lambda,\, \lambda\leq \mu}\right)$ is
a projective system of $C^*$-algebras, in the sense that,
\begin{equation}\label{e:ple} 
\pi_{\lambda,\eta}=\pi_{\lambda,\mu}\circ\pi_{\mu,\eta},\quad \lambda,\mu,\eta\in\Lambda,\
\lambda\leq\mu\leq\eta,
\end{equation} and, in addition,
\begin{equation}\label{e:coh}
\pi_\mu(S)P_{\lambda,\mu}=P_{\lambda,\mu}\pi_\mu(S),\quad \lambda,\mu\in\Lambda,\ 
\lambda\leq\mu,\ S\in\Lloc(\cH_\mu),
\end{equation}
such that
\begin{equation}\label{e:loclhs} \Lloc(\cH)=\varprojlim_{\lambda\in\Lambda} \Lloc(\cH_\lambda),
\end{equation}
where, the projective limit is considered in the category of locally convex $*$-algebras. 
In particular, $\Lloc(\cH)$ is a locally $C^*$-algebra.

For each $\lambda\in \Lambda$, letting $p_\lambda\colon\Lloc(\cH)\ra\RR$ be defined by 
\begin{equation}\label{e:psmu}
 p_\lambda(T)=\|T_\lambda\|_{\cB(\cH_\lambda)},\quad T=\varprojlim_{\nu\in\Lambda}
T_\nu\in\Lloc(\cH),
\end{equation} then $p_\lambda$ is a $C^*$-seminorm on $\Lloc(\cH)$. Then $\Lloc(\cH)$ 
becomes a unital locally $C^*$-algebra with respect to  the locally convex topology induced by  
$\{p_\lambda\}_{\lambda\in\Lambda}$. 

The $C^*$-algebra $b(\Lloc(\cH))$
coincides with the set of all locally bounded operators 
$T=\varprojlim\limits_{\lambda\in\Lambda}T_\lambda$ such that $(T_\lambda)_{\lambda\in\Lambda}$
is uniformly bounded, i.e.\ $\sup\limits_{\lambda\in\Lambda}\|T_\lambda\|<\infty$, 
equivalently, those locally bounded operators $T\colon \cH\ra\cH$ which are bounded with respect
to the canonical norm $\|\cdot\|_\cH$ on the pre-Hilbert space 
$(\cH,\langle\cdot,\cdot\rangle_\cH)$. In particular, $b(\Lloc(\cH))$ is a $C^*$-subalgebra of 
$\cB(\widetilde\cH)$, where $\widetilde\cH$ denotes the completion of 
$(\cH,\langle\cdot,\cdot\rangle_\cH)$ to a Hilbert space.

By taking the closure, any locally bounded operator $T\in\Lloc(\cH)$ uniquely extends
to a closed and densely defined operator $\widetilde T$ on the Hilbert space $\widetilde\cH$.

\subsection{Weak Topologies on $\Lloc(\cH)$.}\label{ss:wt} 
As a locally convex space, $\Lloc(\cH)$ has its projective limit topology given by the family of seminorms
$\{p_\lambda\}_{\lambda\in\Lambda}$ defined at \eqref{e:psmu}. In this article, we use two other operator
topologies. Briefly, following \cite{Gheondea2} and \cite{Gheondea3}, the \emph{weak operator topology} on $\Lloc(\cH)$ is the 
locally convex topology associated to the family of seminorms
\begin{equation}\label{e:wot} \Lloc(\cH)\ni T\mapsto \langle Th,k\rangle_\cH,\quad h,k\in\cH.
\end{equation}
On the other hand, for each $\lambda\in\Lambda$ there is the weak operator topology 
$\tau_{\lambda,\mathrm{wo}}$ on $\Lloc(\cH_\lambda)$ and 
$\{(\Lloc(\cH_\lambda),\tau_{\lambda,\mathrm{wo}})\}_{\lambda\in\Lambda}$ is a projective system of locally 
convex spaces.

\begin{proposition}\label{p:wot} The weak operator topology on $\Lloc(\cH)$ coincides with the projective
limit topology of the projective system of locally convex spaces
$\{(\Lloc(\cH_\lambda),\tau_{\lambda,\mathrm{wo}})\}_{\lambda\in\Lambda}$. In particular $\Lloc(\cH)$ is complete
with respect to the weak operator topology.
\end{proposition}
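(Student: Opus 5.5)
The plan is to prove the two assertions separately: first the identification of the two topologies, then completeness. Completeness comes from the projective limit description, combined with the fact that Lloc(H) sits as a closed subspace.

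\emph{Identification of the topologies.} I will use the canonical embedding \eqref{e:loclhs}, $\Lloc(\cH)=\varprojlim_\lambda \Lloc(\cH_\lambda)$, via $T\mapsto (\pi_\lambda(T))_\lambda$. I compare the two generating families of seminorms.
\begin{itemize}
\item Take a weak operator seminorm $T\mapsto|\langle Th,k\rangle_\cH|$ with $h,k\in\cH$. Since $\Lambda$ is directed, choose $\lambda$ with $h,k\in\cH_\lambda$. Then $\langle Th,k\rangle_\cH=\langle T_\lambda h,k\rangle_{\cH_\lambda}$, which is a $\tau_{\lambda,\mathrm{wo}}$-seminorm composed with $\pi_\lambda$. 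Hence the weak operator topology is weaker than the projective limit topology.
\item Conversely, a basic seminorm of the projective limit topology has the form $T\mapsto|\langle T_\lambda h,k\rangle_{\cH_\lambda}|$ with $h,k\in\cH_\lambda$. Because $J_\lambda$ is isometric and $T_\lambda h=Th$, this equals $|\langle Th,k\rangle_\cH|$.
\end{itemize}
The two families of seminorms therefore coincide, and so do the topologies.

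\emph{Closedness in the product.} Next I show that $\Lloc(\cH)$ is closed in $\prod_\lambda(\cB(\cH_\lambda),\mathrm{wo})$. By Proposition~\ref{p:lbo2}, membership of a family $(T_\lambda)_\lambda$ in $\Lloc(\cH)$ is expressed by the relations
\[
T_\nu|_{\cH_\lambda}=J_{\nu,\lambda}T_\lambda \quad\text{and}\quad T_\nu P_{\lambda,\nu}=P_{\lambda,\nu}T_\nu, \qquad \lambda\le\nu .
\]
Each relation can be written as the vanishing of $\langle T_\nu x,y\rangle-\langle T_\lambda x',y'\rangle$ for suitable fixed vectors. These scalar conditions are weakly continuous in $(T_\lambda)_\lambda$, so the set they cut out is closed. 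Consequently a weakly Cauchy net $(T^i)$ in $\Lloc(\cH)$ converges in $\Lloc(\cH)$ as soon as each coordinate net $(T^i_\lambda)_i$ converges weakly in $\cB(\cH_\lambda)$.

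\emph{Convergence of the coordinate nets: the main obstacle.} For fixed $h,k\in\cH_\lambda$ the scalars $\langle T^i_\lambda h,k\rangle$ form a Cauchy net, so they converge to a value $B_\lambda(h,k)$, and $B_\lambda$ is sesquilinear. The key step is to show that $B_\lambda$ is bounded, so that it is implemented by some $T_\lambda\in\cB(\cH_\lambda)$.
\begin{itemize}
\item For a sequence I would argue by the uniform boundedness principle applied twice. This gives $\sup_i\|T^i_\lambda\|<\infty$, hence $|B_\lambda(h,k)|\le C\|h\|\|k\|$, and the Riesz representation theorem then yields $T_\lambda$.
\item For a general Cauchy net no such uniform bound is automatic. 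I would first try to restrict to bounded nets, or to nets with eventually bounded tails, in each $\cH_\lambda$. In that case the same Riesz argument applies, and the coherence of the limits follows from the closedness step.
\item I expect this boundedness of $B_\lambda$ to be the genuinely delicate point. For an arbitrary weakly Cauchy net in $\cB(\cH_\lambda)$ it can fail: the limit form may be unbounded when $\dim\cH_\lambda=\infty$.
\end{itemize}
So, as worded, the completeness assertion needs either a reading in which only bounded Cauchy nets are required to converge (quasi-completeness), or a reduction to the finite-dimensional case. In finite dimensions every sesquilinear form is bounded, and the argument closes.
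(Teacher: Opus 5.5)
Your identification of the two topologies is the paper's argument. Each seminorm $T\mapsto|\langle Th,k\rangle_\cH|$ factors through $\pi_\lambda$ once $h,k\in\cH_\lambda$, and conversely $\langle T_\lambda h,k\rangle_{\cH_\lambda}=\langle Th,k\rangle_\cH$ for $h,k\in\cH_\lambda$.

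For the completeness assertion, the paper gives no argument beyond the words ``in particular''. Your suspicion is justified: the assertion is false as stated, so no route can prove it. Take $\Lambda$ a singleton (or $\cH_\lambda=\cH$ for all $\lambda$) with $\dim\cH=\infty$, so that $\Lloc(\cH)=\cB(\cH)$.
\begin{itemize}
\item Pick a discontinuous linear functional $\phi$ on $\cH$, using a Hamel basis.
\item For each finite-dimensional subspace $F\subseteq\cH$, let $v_F\in F$ be the Riesz vector of the bounded functional $h\mapsto\phi(P_Fh)$, and set $T_Fh:=\langle h,v_F\rangle v_F$.
\item Direct the $F$ by inclusion. As soon as $h,k\in F$ we have $\langle T_Fh,k\rangle=\phi(h)\ol{\phi(k)}$, so $(T_F)_F$ is weakly Cauchy.
\item A weak limit $T\in\cB(\cH)$ would force $|\phi(h)|^2=\langle Th,h\rangle\le\|T\|\,\|h\|^2$, contradicting the discontinuity of $\phi$.
\end{itemize}
Taking instead $T_F:=AP_F$, with $A$ an everywhere defined unbounded linear map, refutes the completeness claim of Proposition~\ref{p:sot} in the same way.

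The ``closed, hence complete'' reasoning the paper uses for the norm seminorms $p_\lambda$ does not transfer here. The reason is that $(\cB(\cH_\lambda),\tau_{\lambda,\mathrm{wo}})$ is itself not complete. Weak-operator \emph{closedness} of the von Neumann algebras $\Lloc(\cH_\lambda)$ is not \emph{completeness}.

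What your argument does establish is the correct replacement.
\begin{itemize}
\item You write the coherence relations of Proposition~\ref{p:lbo2} as the vanishing of weakly continuous linear functionals. This shows that $\Lloc(\cH)$ is weakly closed in $\prod_\lambda(\cB(\cH_\lambda),\tau_{\lambda,\mathrm{wo}})$.
\item Apply the uniform boundedness principle twice to a weakly bounded \emph{family}, not only to a sequence. This gives norm-bounded $\lambda$-components, so the limit form on each $\cH_\lambda$ is bounded and the Riesz representation applies.
\item Hence $\Lloc(\cH)$ is sequentially complete and quasi-complete in the weak operator topology.
\end{itemize}

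Two small points. First, the finite-dimensional case is an extra hypothesis (all $\cH_\lambda$ finite dimensional), not a reduction. Second, replace your remark that the limit form ``may be unbounded'' with an explicit counterexample such as the one above. That counterexample is what shows the statement itself fails, not merely your method.
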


\begin{proof} The projective limit topology of the projective system
$\{(\Lloc(\cH_\lambda),\tau_{\lambda,\mathrm{wo}})\}_{\lambda\in\Lambda}$ is, by definition, the weakest
locally convex topology on $\Lloc(\cH)$ that makes all the canonical projections $\pi_\lambda\colon
\Lloc(\cH)\ra (\Lloc(\cH_\lambda),\tau_{\lambda,\mathrm{wo}})$ defined at \eqref{e:pimu} continuous. Since,
for any $h,k\in\cH$ there exists $\lambda\in\Lambda$ such that $h,k\in \cH_\lambda$, it follows that
all seminorms defined at \eqref{e:wot} are continuous with respect to the projective limit topology. The 
converse is clear.
\end{proof}

The \emph{strong operator topology} on $\Lloc(\cH)$ is the locally convex topology
associated to the family of seminorms
\begin{equation}\label{e:sot} \Lloc(\cH)\ni T\mapsto \|Th\|_\cH,\quad h\in\cH.
\end{equation}
On the other hand, for each $\lambda\in\Lambda$ there is the strong operator topology
$\tau_{\lambda,\mathrm{so}}$ on $\Lloc(\cH_\lambda)$ and
$\{(\Lloc(\cH_\lambda),\tau_{\lambda,\mathrm{so}})\}_{\lambda\in\Lambda}$ is a projective system of locally 
convex spaces.

\begin{proposition}\label{p:sot}
The strong operator topology on $\Lloc(\cH)$ coincides with the projective limit topology of the 
projective system of locally convex spaces $\{(\Lloc(\cH_\lambda),\tau_{\lambda,\mathrm{so}})\}_{\lambda\in\Lambda}$. In
particular, $\Lloc(\cH)$ is complete with respect to the strong operator topology.
\end{proposition}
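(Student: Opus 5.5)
The argument mirrors the proof of Proposition~\ref{p:wot}, with the seminorms \eqref{e:sot} in place of those in \eqref{e:wot}, and then establishes completeness separately.

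\textbf{Equality of the two topologies.} The projective limit topology is the weakest locally convex topology on $\Lloc(\cH)$ making every canonical map $\pi_\lambda\colon\Lloc(\cH)\ra(\Lloc(\cH_\lambda),\tau_{\lambda,\mathrm{so}})$ from \eqref{e:pimu} continuous. It is therefore generated by the seminorms $T\mapsto\|T_\lambda h\|_{\cH_\lambda}$, for $\lambda\in\Lambda$ and $h\in\cH_\lambda$.

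For any such $\lambda$ and $h\in\cH_\lambda$, we have $T_\lambda h=Th$ by (lbo1). Since the inclusion $J_\lambda$ is isometric, $\|T_\lambda h\|_{\cH_\lambda}=\|Th\|_\cH$. So every generating seminorm of the projective limit topology is one of the seminorms in \eqref{e:sot}.

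Conversely, take $h\in\cH$. Then $h\in\cH_\lambda$ for some $\lambda$, and the seminorm $T\mapsto\|Th\|_\cH$ equals $T\mapsto\|\pi_\lambda(T)h\|_{\cH_\lambda}$. This is continuous for the projective limit topology. The two families of seminorms therefore coincide, and so do the topologies.

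\textbf{Completeness.} This is the only step needing care, and I expect it to be the main obstacle. Let $(T^{(i)})_{i}$ be a Cauchy net in $\Lloc(\cH)$ for the strong operator topology. For each $h\in\cH$, the net $(T^{(i)}h)_i$ is Cauchy in $\cH_\lambda$ whenever $h\in\cH_\lambda$, because $T^{(i)}h\in\cH_\lambda$. Since $\cH_\lambda$ is complete, we may define $T_\lambda h:=\lim_i T^{(i)}_\lambda h$ in $\cH_\lambda$.

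By the isometric inclusions, this limit does not depend on $\lambda$, so it defines a linear map $T\colon\cH\ra\cH$ with $T\cH_\lambda\subseteq\cH_\lambda$.

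The difficulty is that $T_\lambda$ need not be bounded, because a strongly Cauchy net need not be norm bounded. The first thing I would try is to note that the usual completeness of $\cB(\cH_\lambda)$ under the strong operator topology is meant in the quasi-complete or sequential sense. For sequences, uniform boundedness of $(T^{(n)}_\lambda)_n$ follows from the Banach--Steinhaus theorem on the Hilbert space $\cH_\lambda$. Hence $T_\lambda\in\cB(\cH_\lambda)$ with $\|T_\lambda\|\le\sup_n\|T^{(n)}_\lambda\|$.

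Granting boundedness, I would check coherence using Proposition~\ref{p:lbo2}(1). Each $T^{(i)}_\mu$ commutes with $P_{\lambda,\mu}$ for $\lambda\le\mu$, and this commutation passes to the strong limit: $T_\mu P_{\lambda,\mu}h=\lim_i T^{(i)}_\mu P_{\lambda,\mu}h=\lim_i P_{\lambda,\mu}T^{(i)}_\mu h=P_{\lambda,\mu}T_\mu h$, using continuity of $P_{\lambda,\mu}$. The restriction identity $T_\mu|_{\cH_\lambda}=T_\lambda$ holds by construction. By Proposition~\ref{p:lbo2}, $T=\varprojlim_\lambda T_\lambda\in\Lloc(\cH)$. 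Finally, $T^{(i)}\to T$ strongly by the definition of $T$.

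Equivalently, the completeness statement reduces to the fact that $\Lloc(\cH)$ is closed in $\prod_\lambda(\cB(\cH_\lambda),\tau_{\lambda,\mathrm{so}})$. The commutation relations \eqref{e:coh} and the restriction relations are preserved under strong limits, and a projective limit of (quasi-)complete spaces along closed coherence conditions is again (quasi-)complete. The completeness of each factor is then inherited from the corresponding statement for $\cB(\cH_\lambda)$.
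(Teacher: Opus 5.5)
Your proof that the two topologies coincide is the paper's argument. The projective limit topology is generated by $T\mapsto\|\pi_\lambda(T)h\|$ with $h\in\cH_\lambda$, and these are exactly the seminorms \eqref{e:sot}, because every $h\in\cH$ lies in some $\cH_\lambda$ and the inclusions are isometric.

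\textbf{Completeness.} The paper gives no argument beyond ``in particular'', implicitly treating completeness as inherited from the factors $(\Lloc(\cH_\lambda),\tau_{\lambda,\mathrm{so}})$. Your diagnosis of the difficulty is correct. You should state its consequence outright, rather than saying completeness ``is meant'' in a weaker sense: read literally, for Cauchy nets, the clause is false.

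Here is a counterexample. Take $\Lambda$ a singleton and $\cH=\cH_{\lambda_0}$ infinite-dimensional, so that $\Lloc(\cH)=\cB(\cH)$. Choose an everywhere defined unbounded linear map $A$ on $\cH$, which exists via a Hamel basis. For finite-dimensional subspaces $F$, directed by inclusion, put $A_F:=AP_F\in\cB(\cH)$. Then $A_Fh=Ah$ as soon as $h\in F$, so $(A_F)_F$ is strongly Cauchy. Any strong limit in $\cB(\cH)$ would have to equal $A$, which is impossible since $A$ is unbounded. The same example defeats the completeness clause of Proposition~\ref{p:wot}.

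What survives is sequential completeness and quasi-completeness, and your argument proves both. Banach--Steinhaus on each $\cH_\lambda$ bounds $\sup_i\|T^{(i)}_\lambda\|$ for a strongly Cauchy sequence. The same holds for any strongly bounded Cauchy net, so your sequence argument extends verbatim. The restriction and commutation relations of Proposition~\ref{p:lbo2} pass to strong limits, so the pointwise limit lies in $\Lloc(\cH)$. With that caveat made explicit, your proposal correctly proves the corrected statement and supplies an argument the paper omits.
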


\begin{proof} The projective limit topology of the projective system
$\{(\Lloc(\cH_\lambda),\tau_{\lambda,\mathrm{so}})\}_{\lambda\in\Lambda}$ is, by definition, the weakest
locally convex topology on $\Lloc(\cH)$ that makes all the canonical projections $\pi_\lambda\colon
\Lloc(\cH)\ra (\Lloc(\cH_\lambda),\tau_{\lambda,\mathrm{so}})$ defined at \eqref{e:pimu} continuous. Since,
for any $h\in\cH$ there exists $\lambda\in\Lambda$ such that $h\in \cH_\lambda$, it follows that
all seminorms defined at \eqref{e:sot} are continuous with respect to the projective limit topology. The 
converse is clear.
\end{proof}

\subsection{Locally von Neumann Algebras} \label{ss;lvna}
In this subsection, we briefly recall the notion of a locally von Neumann algebra and review a few of its properties, following
mainly \cite{Dosi3}. We fix a locally Hilbert space $\cH=\varinjlim\limits_{\lambda\in\Lambda}\cH_\lambda$ and we assume it
to be representing (commutative). Although the condition that the locally Hilbert space is representing is not necessary for some 
definitions, the main results require it.

\begin{definition}\label{d:lvn} $\cM$ is
a \emph{locally von Neumann algebra} on $\cH$ if the following properties hold.
\begin{itemize}
\item[(LVN1)] $\cM$ is a $*$-subalgebra of $\Lloc(\cH)$.
\item[(LVN2)] $\cM$ is closed with respect to the 
strong operator topology, equivalently, with respect to the weak operator topology.
\item[(LVN3)] For each $\lambda\in \Lambda$ we have $\cM P_\lambda\in \cM$, where $P_\lambda$ denotes the unique 
Hermitian projection of $\cH$ onto $\cH_\lambda$ as in Definition \ref{d:rep}.
\item[(LVN3)] $\cM$ contains the identity.
\end{itemize}
\end{definition}

\begin{remark}\label{r:lvna}
(a) Let $\cH=\varinjlim\limits_{\lambda\in\Lambda}\cH_\lambda$ be a locally Hilbert space and let $\Lloc(\cH)$ be the unital 
locally $C^*$-algebra as introduced in Subsection~\ref{ss:lcsab}. With this definition, we first observe that, in view of 
Proposition~\ref{p:wot} and Proposition~\ref{p:sot}, $\Lloc(\cH)$ is a locally von Neumann algebra. 

(b) Any locally von Neumann algebra on $\cH$ is a unital locally $C^*$-subalgebra of $\Lloc(\cH)$.

(c) Also, with notation as in
Subsection~\ref{ss:lcsab}, let us observe that, for each $\mu\in\Lambda$, $\cB_{\mathrm{loc}}(\cH_\mu)$, the range of $\pi_\mu$ 
defined as in \eqref{e:pimu} is a von Neumann algebra on the Hilbert space $\cH_\mu$. In addition, for each 
$\lambda,\mu\in\Lambda$, the map $\pi_{\lambda,\mu}\colon\cB_{\mathrm{loc}}(\cH_\mu)\ra \cB_{\mathrm{loc}}(\cH_\lambda)$
defined as in \eqref{e:plmh}, is weakly continuous $*$-epimorphism of von Neumann algebras and then 
$\left((\Lloc(\cH_\lambda))_{\lambda\in\Lambda},
(\pi_{\lambda,\mu})_{\lambda,\mu\in\Lambda,\, \lambda\leq \mu}\right)$ is
a projective system of von Neumann algebras, in the sense that \eqref{e:ple} holds and, in addition, \eqref{e:coh} holds as well. 
Then, by \eqref{e:loclhs}, it follows that $\Lloc(\cH)$ is a projective limit, considered in the category of locally convex 
$*$-algebras, of von Neumann algebras.

(d) The projective system of von Neumann algebras $\left((\Lloc(\cH_\lambda))_{\lambda\in\Lambda},
(\pi_{\lambda,\mu})_{\lambda,\mu\in\Lambda,\, \lambda\leq \mu}\right)$ can be taken in the category of von Neumann algebras 
and, in this case, it is $b(\Lloc(\cH))$.
\end{remark}

A locally von Neumann algebra is called a \emph{multinormed $W^\ast$-algebra} in \cite{Fragoulopoulou}, 
where it is defined as a projective  limit of von Neumann algebras. To be more precise, with notation from Subsection~\ref{ss:lca},
that means that, given a locally Hilbert space $\cH=\varinjlim_{\lambda\in\Lambda}\cH_\lambda$, 
for each $\lambda\in \Lambda$,
there exists a von Neumann algebra $\cM_\lambda\in\cB(\cH_\lambda)$ such that, for each $\lambda\leq\mu$, there exists 
$\pi_{\lambda,\mu}\colon\cM_\mu\ra\cM_\lambda$ a $*$-epimorphism of von Neumann algebras, such that 
$\big((\cM_\lambda)_{\lambda\in\Lambda},(\pi_{\lambda,\mu})_{\lambda\leq \mu}\big)$ is a projective system of von Neumann
algebras and $\cM=\varprojlim_{\lambda\in\Lambda} \cM_\lambda$. Further, $\cM$ is an Abelian locally von Neumann algebra if 
and only if it can be realised as a projective limit of Abelian von Neumann algebras.

In this paper, we will use the following equivalent characterisation of locally von Neumann algebras, 
cf.\ Subsection 5.3 in \cite{Dosi3}. 

\begin{theorem} \label{t:lvna}
Let $\cM$ be a locally $C^\ast$-subalgebra of $\Lloc(\cH)$, where $\cH$ is a representing locally Hilbert space. Then, $\cM$ is a 
locally von Neumann algebra if and only if $\cM$ is a multinormed $W^\ast$-algebra. 
\end{theorem}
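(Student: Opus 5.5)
We have to prove both directions of Theorem~\ref{t:lvna}. Throughout, write $\cM_\lambda:=\pi_\lambda(\cM)\subseteq\cB(\cH_\lambda)$ for $\lambda\in\Lambda$, where $\pi_\lambda$ is as in \eqref{e:pimu}.

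\textbf{Forward direction.} Assume $\cM$ satisfies (LVN1)--(LVN4). The first step is to show that each $\cM_\lambda$ is a von Neumann algebra on $\cH_\lambda$.

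\emph{Unit and $*$-closure.} $\cM_\lambda$ is a unital $*$-subalgebra of $\cB(\cH_\lambda)$, since $\pi_\lambda$ is a unital $*$-morphism.

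\emph{Weak closedness.} Let $(S_i)$ be a net in $\cM_\lambda$ converging weakly to $S\in\cB(\cH_\lambda)$. Choose $T_i\in\cM$ with $\pi_\lambda(T_i)=S_i$ and replace $T_i$ by $T_iP_\lambda$, which lies in $\cM$ by (LVN3). Since $\cH$ is representing, $P_\lambda$ is a locally bounded projection commuting with every $P_\mu$. Concretely, $(T_iP_\lambda)_\mu=\pi_\mu(T_i)P_{\lambda\wedge\mu}$, where the compression goes onto $\cH_\lambda\cap\cH_\mu$, and $\pi_\mu(T_i)$ reduces $\cH_\lambda\cap\cH_\mu$ by Proposition~\ref{p:lbo2}. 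It follows that $T_iP_\lambda$ is determined by $S_i$: on $\cH_\mu$ it equals $S_i$ restricted to $\cH_\lambda\cap\cH_\mu$ and zero on the complement.

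Consequently the net $(T_iP_\lambda)$ converges in the weak operator topology of $\Lloc(\cH)$, that is, at every level $\mu$. Each block is a compression of $S_i$ to a reducing subspace, and weak convergence passes to compressions. The limit is a locally bounded operator $T$ by Proposition~\ref{p:wot}: the coherence relations \eqref{e:temuha} are preserved under weak limits. By (LVN2), $T\in\cM$, and $\pi_\lambda(T)=S$. Hence $\cM_\lambda$ is weakly closed.

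\emph{Projective system.} The connecting maps $\pi_{\lambda,\mu}$ restrict to $*$-epimorphisms $\cM_\mu\to\cM_\lambda$, and $\cM=\varprojlim_\lambda\cM_\lambda$ because $\cM$ is closed in $\Lloc(\cH)=\varprojlim\Lloc(\cH_\lambda)$. So $\cM$ is a multinormed $W^*$-algebra.

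\textbf{Converse.} Assume $\cM=\varprojlim\cM_\lambda$ with $\cM_\lambda$ von Neumann algebras. Then $\cM$ is a unital $*$-subalgebra of $\Lloc(\cH)$, so (LVN1) and (LVN4) hold.

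\emph{Strong closedness (LVN2).} If $T_i\to T$ strongly in $\Lloc(\cH)$, then by Proposition~\ref{p:sot} we have $\pi_\lambda(T_i)\to\pi_\lambda(T)$ strongly for every $\lambda$. Each $\cM_\lambda$ is strongly closed, so $\pi_\lambda(T)\in\cM_\lambda$ for all $\lambda$, and the coherent family lies in the projective limit. Hence $T\in\cM$.

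\emph{Invariance under $P_\lambda$ (LVN3).} We must check that $TP_\lambda\in\cM$ for $T\in\cM$. At level $\mu$, $(TP_\lambda)_\mu=T_\mu P^{(\mu)}$, where $P^{(\mu)}$ is the projection of $\cH_\mu$ onto $\cH_\lambda\cap\cH_\mu$. So it suffices to show $P^{(\mu)}\in\cM_\mu$.

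Pick $\epsilon\ge\lambda,\mu$. The projection $P_{\lambda,\epsilon}$ of $\cH_\epsilon$ onto $\cH_\lambda$ commutes with $\cM_\epsilon$, since every element of $\Lloc(\cH_\epsilon)$ commutes with it by \eqref{e:coh}. Whether $P_{\lambda,\epsilon}$ itself lies in $\cM_\epsilon$, so that it could be compressed down to $\cM_\mu$, is exactly what the multinormed $W^*$ structure must supply. I would argue via the kernels of the surjections: $\ker\pi_{\lambda,\epsilon}$ is a weakly closed two-sided ideal of $\cM_\epsilon$, hence of the form $\cM_\epsilon(I-Z)$ for a central projection $Z\in\cM_\epsilon$. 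I expect $Z=P_{\lambda,\epsilon}$, because $\pi_{\lambda,\epsilon}$ is compression to $\cH_\lambda$ and the unit of $\cM_\lambda$ is $I_{\cH_\lambda}$. Then $P_{\lambda,\epsilon}\in\cM_\epsilon$, and, using the representing hypothesis (Lemma~\ref{l:repof}), its compression to $\cH_\mu$ is $P^{(\mu)}\in\cM_\mu$.

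This identification of the central support of $\ker\pi_{\lambda,\epsilon}$ with $P_{\lambda,\epsilon}$ is the main obstacle. If $\cM_\epsilon$ acts degenerately on $\cH_\epsilon\ominus\cH_\lambda$, the equality $Z=P_{\lambda,\epsilon}$ can fail. In that case I would fall back on the result of \cite{Dosi3}, Subsection~5.3, where the multinormed $W^*$-algebra is understood to be realised with these projections included.
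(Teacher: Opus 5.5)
The paper does not prove this statement itself; it imports it from Subsection~5.3 of \cite{Dosi3}. Your forward direction is a correct, self-contained argument. You use (LVN3) to replace $T_i$ by $T_iP_\lambda$, and the representing property to see that each level of $T_iP_\lambda$ is the compression of $S_i$ to the reducing subspace $\cH_\lambda\cap\cH_\mu$. The relations \eqref{e:temuha} survive weak limits, and this, rather than any completeness claim, is what produces the limit in $\Lloc(\cH)$. Then (LVN2) shows that $\pi_\lambda(\cM)$ is weakly closed, and $\cM=\varprojlim_\lambda\pi_\lambda(\cM)$ follows from closedness as you say.

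The converse has a genuine gap at (LVN3), and it cannot be filled, because that implication is false as stated. Take $\Lambda=\{1,2\}$ with $1\leq 2$ and $\cH_1=\CC e_1\subset\cH_2=\CC e_1\oplus\CC e_2$, which is trivially representing. Then $\Lloc(\cH)$ consists of the operators diagonal with respect to $e_1,e_2$; let $\cM=\CC I_{\cH}$.
\begin{itemize}
\item $\cM_1=\cB(\cH_1)$ and $\cM_2=\CC I_{\cH_2}$ are von Neumann algebras.
\item The compression $\pi_{1,2}\colon\cM_2\to\cM_1$ is a normal $*$-isomorphism, and $\cM=\varprojlim_\lambda\cM_\lambda$. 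So $\cM$ is a multinormed $W^*$-algebra in the sense used in the paper.
\item Yet $IP_1=P_1\notin\cM$, so (LVN3) fails. Equivalently, $\cM''=\Lloc(\cH)\neq\cM$, since $P_1$ is central in $\Lloc(\cH)$.
\end{itemize}
This is exactly the degenerate case you anticipated: $\ker\pi_{1,2}=\{0\}$, so the central projection is $Z=I\neq P_{1,2}$. Appealing to \cite{Dosi3} cannot rescue the concrete claim, because this $\cM$ is simply not a locally von Neumann algebra on the given $\cH$.

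What your argument does prove is the converse under the extra hypothesis $P_\lambda\in\cM$ for all $\lambda$. By your own reduction, this is equivalent to $P_{\lambda,\epsilon}\in\cM_\epsilon$ whenever $\lambda\leq\epsilon$. Under it, (LVN3) is immediate since $\cM$ is an algebra, and your (LVN2) step finishes the proof. A version ``up to $*$-isomorphism'' would instead require realising $\cM$ on a different locally Hilbert space, which is a different argument altogether.
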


 Let $\cA$ be any subset of $\Lloc(\cH)$. Then the commutant of $\cA$ is denoted by $\cA^\prime$ and it is defined, 
 as usually, as 
\begin{equation*}
\cA^\prime := \left\{  T \in \Lloc(\cH) \; \; | \; \; TS = ST, \; \; \; \text{for all} \; \; S \in \cA \right\}.
\end{equation*}
The following double commutant theorem is proven in \cite{Dosi3}.

\begin{theorem} \label{t:dc}
Let $\cM$ be a $\ast$-subalgebra of $\Lloc(\cH)$, such that  it contains the identity $I_\cH$ and the locally Hilbert space is 
representing. Then $\cM$ is a locally von Neumann algebra if and 
only if $(\cM^{\prime})^\prime = \cM$.
\end{theorem}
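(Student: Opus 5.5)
Both implications will be reduced levelwise: via the projections $\pi_\lambda$ we pass to the Hilbert spaces $\cH_\lambda$, apply von Neumann's classical double commutant theorem there, and return using the projective limit description \eqref{e:loclhs}. The representing hypothesis (lch5) is what makes the Hermitian projections $P_\lambda$ lie in $\Lloc(\cH)$ and commute with each other. Set $\cM_\lambda:=\pi_\lambda(\cM)\subseteq\cB(\cH_\lambda)$.

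\emph{Step 1: commutants are locally von Neumann algebras.} For any $\ast$-closed set $\cA\subseteq\Lloc(\cH)$, $\cA'$ is a unital $\ast$-subalgebra of $\Lloc(\cH)$. It is weakly closed, since $T\mapsto\langle (TS-ST)h,k\rangle_\cH$ is weakly continuous for each fixed $S$ and $h,k\in\cH$; here we use that $S$ maps $\cH$ into $\cH$ and that $S^*$ is again locally bounded. For (LVN3), we check that each $P_\lambda$ lies in $\cA'$ whenever $\cA\subseteq\Lloc(\cH)$. Indeed, by \eqref{e:coh} every locally bounded $S$ commutes with $P_{\lambda,\mu}$ at every level $\mu\ge\lambda$, hence $SP_\lambda=P_\lambda S$. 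So $\cA'$ contains all $P_\lambda$ and is closed under products, which gives $\cA'P_\lambda\subseteq\cA'$. This proves the direction ``$\cM''=\cM\Rightarrow\cM$ is a locally von Neumann algebra''.

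\emph{Step 2: identify $\cM'$ levelwise.} Assume $\cM$ is a locally von Neumann algebra. We show that $T=\varprojlim T_\lambda\in\cM'$ if and only if $T_\lambda\in(\cM_\lambda)'\cap\Lloc(\cH_\lambda)$ for every $\lambda$. This is immediate from $(ST)_\lambda=S_\lambda T_\lambda$ in \eqref{e:setep}. Applying the same identity to $\cM''$ gives
\[ \cM''=\{T\in\Lloc(\cH): T_\lambda\in(\pi_\lambda(\cM'))'\ \text{for all }\lambda\}. \]
By (LVN3), $P_\lambda\in\cM$, and $P_\lambda$ restricted to $\cH_\mu$ (for $\mu\ge\lambda$) equals $P_{\lambda,\mu}$. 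Hence $P_{\lambda,\mu}\in\cM_\mu$, and the level algebras cohere.

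\emph{Step 3: compute the level commutants.} This is the main obstacle: proving $\pi_\lambda(\cM')=(\cM_\lambda)'$ inside $\cB(\cH_\lambda)$, i.e.\ every $X\in\cB(\cH_\lambda)$ commuting with $\cM_\lambda$ lifts to an element of $\cM'$. Define the lift $\widehat X$ on each $\cH_\mu$ with $\mu\ge\lambda$ by $\widehat X_\mu:=XP_{\lambda,\mu}$, and on a general $\cH_\nu$ by passing through a common upper bound. We then check three things.
\begin{itemize}
\item[(i)] Coherence (Proposition~\ref{p:lbo2}): this uses the commutation $P_{\lambda,\epsilon}P_{\nu,\epsilon}=P_{\nu,\epsilon}P_{\lambda,\epsilon}$ from Lemma~\ref{l:repof}, together with the fact that $\cH_\nu\cap\cH_\lambda$ is the range of $P_\nu P_\lambda$.
\item[(ii)] Invariance of $\cH_\lambda\cap\cH_\nu$ under $X$: $P_\nu$ restricted to $\cH_\lambda$ lies in $\cM_\lambda$, because $P_\nu P_\lambda\in\cM$. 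Hence $X$ commutes with it, which is exactly what is needed.
\item[(iii)] Commutation with $\cM$: since $S_\mu$ commutes with $P_{\lambda,\mu}$ and $X$ commutes with $S_\lambda$, we get $\widehat X S=S\widehat X$.
\end{itemize}
Then $\pi_\lambda(\widehat X)=X$.

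\emph{Step 4: conclude.} We also need $\cM_\lambda$ to be a von Neumann algebra on $\cH_\lambda$. This holds by Theorem~\ref{t:lvna}, or directly: (LVN2) and Proposition~\ref{p:wot} give weak closedness, and the lifting argument of Step 3 applied to bounded nets shows that weak limits in $\cB(\cH_\lambda)$ come from $\cM$. Step 3 gives $\pi_\lambda(\cM')=\cM_\lambda'$, so the classical bicommutant theorem yields $(\pi_\lambda(\cM'))'=\cM_\lambda''=\cM_\lambda$. By Step 2, $T\in\cM''$ then means $T_\lambda\in\cM_\lambda$ for all $\lambda$, so $\cM''=\varprojlim\cM_\lambda$. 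Finally, $\varprojlim\cM_\lambda=\cM$ because $\cM$ is closed in the projective limit (weak) topology by Proposition~\ref{p:wot}. A coherent family $(T_\lambda)$ with each $T_\lambda=\pi_\lambda(S^{(\lambda)})$, $S^{(\lambda)}\in\cM$, is the weak limit of the net $S^{(\lambda)}P_\lambda\in\cM$ by (LVN3). This last point is the other place where care is needed.
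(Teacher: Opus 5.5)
The paper gives no argument of its own for this theorem; it quotes the result from \cite{Dosi3}. Your proof is correct and self-contained: you reduce level by level, via $\pi_\lambda$, to von Neumann's classical bicommutant theorem. It also shows exactly where each hypothesis is used. The representing property makes every $P_\lambda$ a locally bounded operator that commutes with all of $\Lloc(\cH)$. Hence every $P_\lambda$ lies in every commutant, which gives (LVN3) for $\cA'$ and forces $P_\lambda\in\cM''$. Conversely, (LVN3) for $\cM$ puts $P_\nu|_{\cH_\lambda}$ into $\cM_\lambda$. That is what makes your lift $X\mapsto XP_\lambda$ from $\cM_\lambda'$ into $\cM'$ well defined, and hence gives $\pi_\lambda(\cM')=\cM_\lambda'$. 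The citation buys brevity. Your route buys transparency and makes clear that neither commutativity nor sequential finiteness is involved.

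Two points in Step 4 can be tightened. First, you do not need $\cM_\lambda$ to be weakly closed, and your sketch of that (``bounded nets'') is the least precise part. What must really be checked is that $YP_\lambda$ is locally bounded, i.e.\ that a weak limit $Y$ of elements of $\cM_\lambda$ still commutes with each $P_\nu|_{\cH_\lambda}$; weak completeness of $\Lloc(\cH)$ is not the relevant fact. It is simpler to work with $\overline{\cM_\lambda}^{\,w}=\cM_\lambda''$ directly. Steps 2--3 then give $\cM''=\{T\in\Lloc(\cH)\mid T_\lambda\in\overline{\cM_\lambda}^{\,w}\text{ for all }\lambda\}$. Any such $T$ lies in the weak closure of $\cM$ in $\Lloc(\cH)$, since a basic weak neighbourhood of $T$ involves finitely many vectors, all lying in a single $\cH_\lambda$. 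Hence $T\in\cM$ by (LVN2).

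Second, the factor $P_\lambda$ in your final net is superfluous. If $h\in\cH_{\lambda_0}$ and $\lambda\geq\lambda_0$, then $S^{(\lambda)}h=T_\lambda h=Th$, so $(S^{(\lambda)})_\lambda$ already converges strongly to $T$.
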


\subsection{Strictly Inductive Systems of Measure Spaces}\label{ss:sisms}
Given a directed set $(\Lambda;\leq)$, a net $((X_\lambda,\Omega_\lambda))_{\lambda
\in\Lambda}$ is called a \emph{strictly inductive system of measurable spaces} if the following properties hold.
\begin{itemize}
\item[(sim1)] For each $\lambda\in\Lambda$, $(X_\lambda,\Omega_\lambda)$ is a measurable space.
\item[(sim2)] For each $\lambda,\nu\in\Lambda$ with $\lambda\leq\nu$ we have 
$X_\lambda\subseteq X_\nu$ and 
$\Omega_\lambda=\{A\cap X_\lambda\mid A\in\Omega_\nu\}\subseteq\Omega_\nu$.
\end{itemize}
Given a strictly inductive system of measurable spaces 
$((X_\lambda,\Omega_\lambda))_{\lambda
\in\Lambda}$, we denote
\begin{equation}\label{e:lms}
X=\bigcup_{\lambda\in\Lambda} X_\lambda,\quad 
\Omega=\bigcup_{\lambda\in\Lambda}\Omega_\lambda.
\end{equation}
The pair $(X,\Omega)$ is called the \emph{inductive limit} of the strictly inductive system of measurable 
spaces, and we use the notation
\begin{equation}\label{e:ilm}
(X,\Omega)=\varinjlim_{\lambda\in\Lambda}(X_\lambda,\Omega_\lambda).
\end{equation}

Note that $\Omega$ is a \emph{ Boolean ring} of subsets in $X$, that is, 
for any $A,B\in\Omega$ it follows 
$A\setminus B, A\cap B, A\cup B\in\Omega$, but, in general, not even a $\sigma$-ring. However, $\Omega$ is a 
\emph{locally $\sigma$-ring} in the sense that, if $(A_n)_{n\in \mathbb{N}}$ is a sequence of subsets from $\Omega$ 
such that there exists $\lambda\in\Lambda$ with the property that $A_n\in\Omega_\lambda$ for all
$n\in\NN$, it follows that $\bigcup\limits_{n\in\NN} A_n$ and $\bigcap\limits_{n\in\NN} A_n$ are in $\Omega$.

We see that $\Omega$ has a canonical extension to a $\sigma$-algebra. Letting 
\begin{equation}\label{e:tomega} \widetilde\Omega:=\{A\subseteq X\mid A\cap 
X_\lambda\in\Omega_\lambda\mbox{ for all }\lambda\in\Lambda\},
\end{equation}
then, by Proposition~3.1 in \cite{Gheondea3},
$\widetilde\Omega$ is a $\sigma$-algebra and $\Omega\subseteq\widetilde\Omega$. It is a fact, see 
Remark~3.4 in \cite{Gheondea4}, that the pair $(X,\tilde\Omega)$ is the inductive limit of the strictly inductive 
system of measurable spaces $((X_\lambda,\Omega_\lambda))_{\lambda
\in\Lambda}$ in the category of measurable spaces.

Given a directed set $(\Lambda;\leq)$, a net $((X_\lambda,\Omega_\lambda,\mu_\lambda))_{\lambda
\in\Lambda}$ of measure spaces is called a \emph{strictly inductive system of measure spaces} if
$(X_\lambda,\Omega_\lambda)_{\lambda\in\Lambda}$ is a strictly inductive system of measurable spaces 
and, in addition,
\begin{itemize}
\item[(sim3)] for any $\lambda,\nu\in\Lambda$ with $\lambda\leq\nu$ and any $U\in\Omega_\lambda$ we 
have $\mu_\lambda(U)=\mu_\nu(U)$.
\end{itemize}
With notation as in \eqref{e:lms},
let $\mu\colon\Omega\ra [0,+\infty]$ be defined by
\begin{equation}\label{e:lmsa}
\mu(\Delta)=\mu_\lambda (\Delta),\quad \Delta\in\Omega,
\end{equation}
where $\lambda\in\Lambda$ is such that $\Delta\in\Omega_\lambda$. 

It is easy to see that $\mu$ is correctly defined, that is, its definition does not depend on $\lambda$. 
In addition, $\mu$ is nondecreasing, 
$\mu(\emptyset)=0$, and \emph{additive} in the sense that whenever $A,B\in\Omega$ are disjoint then 
$\mu(A\cup B)=\mu(A)+\mu(B)$. In general, $\mu$ is not $\sigma$-additive, but it
is \emph{locally $\sigma$-additive}, that is: 
for any mutually disjoint sequence $(A_n)_{n\in\NN}$ such that there exists 
$\lambda\in\Lambda$ with the property that $A_n\in\Omega_\lambda$ for all $n\in\NN$ we 
have
\begin{equation}\label{e:sap}
\mu(\bigcup_{n\in\NN} A_n)=\sum_{n\in\NN}\mu(A_n).
\end{equation}

\begin{definition}
The triple $(X,\Omega,\mu)$ is called the \emph{inductive limit} of the strictly inductive system of measure
spaces $(X_\lambda,\Omega_\lambda,\mu_\lambda)_{\lambda\in\Lambda}$ and we use the notation
\begin{equation}\label{e:ilsm}
(X,\Omega,\mu)=\varinjlim_{\lambda\in\Lambda} (X_\lambda,\Omega_\lambda,\mu_\lambda).
\end{equation}
By $L^\infty_{\mathrm{loc}}(X,\mu)$ we denote
the set of all functions $\phi\colon X\ra \CC$, identified $\mu$-a.e., such that, 
for any $\lambda\in\Lambda$ we have $\phi|_{X_\lambda}\in L^\infty(X_\lambda,\mu_\lambda)$.
\end{definition} 

\begin{remark}\label{r:lef}
In Example~3.5 in \cite{Gheondea4} it is shown that, 
with natural addition, multiplication, multiplication with scalars, and involution,
$L^\infty_{\mathrm{loc}}(X,\mu)$ is a commutative $*$-algebra and,  letting, 
for each $\lambda\in \Lambda$ the seminorm
\begin{equation}\label{e:loclisem}
p_\lambda(\phi):= \|\phi|X_\lambda\|_\infty=\esssup_{x\in X_\lambda}|\phi(x)|,\quad 
\phi\in L^\infty_{\mathrm{loc}}(X,\mu),
\end{equation}
with respect to the topology defined by the family of seminorms 
$\{p_\lambda\}_{\lambda\in\Lambda}$, 
$L^\infty_{\mathrm{loc}}(X,\mu)$ is a locally $C^*$-algebra. More precisely, for each $\lambda\in \Lambda$, let
\begin{equation}\label{e:lefal}
L^\infty_\lambda(X,\mu):=\{\phi \in L^\infty_{\mathrm{loc}}(X,\mu) \mid \supp(\phi)\subseteq X_\lambda\},
\end{equation}
and observe that we have 
a projective system of $C^*$-algebras made up by the $C^*$-algebras
$(L^\infty_\lambda(X,\mu))_{\lambda\in\Lambda}$ and the projective system of $*$-morphisms
$(\Phi_{\lambda,\nu})_{\lambda\leq \nu}$ where, for $\lambda\leq \nu$, the $*$-morphism 
$\Phi_{\lambda,\nu}\colon L^\infty_{\nu}(X,\mu)\ra L^\infty_{\lambda}(X,\mu)$ is defined by $\phi\mapsto 
\phi_{\lambda}$, where $\phi_\lambda(x)=\phi(x)$, if $x\in X_\lambda$, and $\phi_\lambda(x)=0$, if 
$x\in X\setminus X_\lambda$. Then, the projective limit of this projective system of $C^*$-algebras, in the category of locally 
convex $*$-algebras, is $L^\infty_{\mathrm{loc}}(X,\mu)$.
\end{remark}

Although an inductive limit of a strictly inductive system of measure spaces is not, in general, a measure 
space, it has a canonical extension to a measure space, see \cite{KulkarniPamula1} and \cite{Gheondea4}.
Consider the $\sigma$-algebra $\widetilde\Omega$, defined as in \eqref{e:lms}, and
define $\widetilde\mu\colon\widetilde\Omega\ra [0,+\infty]$ by
\begin{equation}\label{e:wimu}
\widetilde\mu(A):=\sup_{\lambda\in\Lambda} \mu_\lambda(A\cap X_\lambda),\quad 
A\in\widetilde\Omega.
\end{equation}
More precisely, if $A\in\widetilde\Omega$ is arbitrary, then, in view of the properties (sim1)--(sim3), the 
net $(\mu_\lambda(A\cap X_\lambda))_{\lambda\in\Lambda}$ is nondecreasing, hence
\begin{equation*}
\widetilde\mu(A)=\begin{cases}+\infty, & \mbox{ if }(\mu_\lambda(A\cap X_\lambda))_{\lambda\in\Lambda}\mbox{ is unbounded,} \\
\lim\limits_{\lambda\in\Lambda}\mu_\lambda(A\cap X_\lambda), & \mbox{ if }
(\mu_\lambda(A\cap X_\lambda))_{\lambda\in\Lambda}\mbox{ is bounded}.
\end{cases}
\end{equation*}

The following result is Proposition~2.4 in \cite{KulkarniPamula1} and Proposition~3.7 in \cite{Gheondea4}. However, we 
slightly change the notation  to emphasise the fact that it consists of functions defined allover $X$. 

\begin{proposition}\label{p:wimu}
$\widetilde\mu$ is a measure which extends $\mu$ defined in \eqref{e:lmsa}. 
\end{proposition}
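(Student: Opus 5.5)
To prove Proposition~\ref{p:wimu}, I would verify directly that $\widetilde\mu$ is a measure on $\widetilde\Omega$ and then that it agrees with $\mu$ on $\Omega$. The key tool is the monotonicity of the net $(\mu_\lambda(A\cap X_\lambda))_{\lambda\in\Lambda}$.

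\emph{Monotonicity.} For $A\in\widetilde\Omega$ and $\lambda\leq\nu$, we have $A\cap X_\nu\in\Omega_\nu$ and $A\cap X_\lambda=(A\cap X_\nu)\cap X_\lambda\in\Omega_\lambda\subseteq\Omega_\nu$. By (sim3) and the monotonicity of $\mu_\nu$,
\begin{equation*}
\mu_\lambda(A\cap X_\lambda)=\mu_\nu(A\cap X_\lambda)\leq \mu_\nu(A\cap X_\nu).
\end{equation*}
Hence the net is nondecreasing, and $\widetilde\mu(A)$ equals its limit in $[0,+\infty]$. Clearly $\widetilde\mu(\emptyset)=0$.

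\emph{Extension of $\mu$.} Let $\Delta\in\Omega_{\lambda_0}$. For any $\lambda$, choose $\nu\geq\lambda,\lambda_0$. Then $\mu_\lambda(\Delta\cap X_\lambda)\leq\mu_\nu(\Delta\cap X_\nu)=\mu_\nu(\Delta)=\mu(\Delta)$, since $\Delta\subseteq X_{\lambda_0}\subseteq X_\nu$. Equality is attained at $\lambda=\lambda_0$. Therefore $\widetilde\mu(\Delta)=\mu(\Delta)$.

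\emph{Countable additivity.} Let $(A_n)_{n\in\NN}$ be mutually disjoint sets in $\widetilde\Omega$ with union $A$. For each $\lambda$, the sets $A_n\cap X_\lambda$ are disjoint members of $\Omega_\lambda$, so $\sigma$-additivity of $\mu_\lambda$ gives
\begin{equation*}
\mu_\lambda(A\cap X_\lambda)=\sum_{n}\mu_\lambda(A_n\cap X_\lambda).
\end{equation*}
Taking the supremum over $\lambda$, the task is to interchange $\sup_\lambda$ with $\sum_n$. This interchange is the main obstacle, since $\Lambda$ is only directed and not countable.

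\begin{itemize}
\item[$(\le)$] Each term satisfies $\mu_\lambda(A_n\cap X_\lambda)\leq\widetilde\mu(A_n)$, so $\widetilde\mu(A)\leq\sum_n\widetilde\mu(A_n)$.
\item[$(\ge)$] Fix $N$ and reals $c_n<\widetilde\mu(A_n)$ for $n\le N$ (this also handles infinite values). For each $n\le N$, pick $\lambda_n$ with $\mu_{\lambda_n}(A_n\cap X_{\lambda_n})>c_n$. Using directedness, choose $\lambda\geq\lambda_1,\dots,\lambda_N$. By monotonicity of each net,
\begin{equation*}
\widetilde\mu(A)\geq\mu_\lambda(A\cap X_\lambda)\geq\sum_{n\le N}\mu_\lambda(A_n\cap X_\lambda)>\sum_{n\le N}c_n.
\end{equation*}
Letting $c_n\uparrow\widetilde\mu(A_n)$ and then $N\to\infty$ gives $\widetilde\mu(A)\geq\sum_n\widetilde\mu(A_n)$.
\end{itemize}

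This step works because only finitely many indices must be dominated at each stage, which directedness provides. Combining the two inequalities, $\widetilde\mu$ is countably additive, so it is a measure on $\widetilde\Omega$ extending $\mu$.
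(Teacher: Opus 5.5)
Your proof is correct and complete. Each step holds: the monotonicity of the net $(\mu_\lambda(A\cap X_\lambda))_{\lambda\in\Lambda}$, the identification $\widetilde\mu=\mu$ on $\Omega$, and the interchange of the directed supremum with the countable sum via finite partial sums and directedness. In the second step you tacitly use $\Omega\subseteq\widetilde\Omega$, which follows from (sim2) by the same choice of an upper bound $\nu\geq\lambda,\lambda_0$.

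The paper gives no argument of its own here; it defers to Proposition~2.4 of \cite{KulkarniPamula1} and Proposition~3.7 of \cite{Gheondea4}. So there is no in-paper proof to compare against, and your direct verification fills that gap on its own.
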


The following result is Lemma~3.8 in \cite{Gheondea4}.

\begin{lemma}\label{l:letexem}
Given a strictly inductive system of measure spaces 
$(X_\lambda,\Omega_\lambda,\mu_\lambda)_{\lambda\in\Lambda}$, let $(X,\Omega,\mu)$ denote its inductive limit as 
in \eqref{e:ilsm}. For each $\lambda\in\Lambda$,
consider the Hilbert space $L^2_\lambda(X,\mu)$ consisting of all functions $f\colon X\ra \CC$ such 
that $\supp(f)\subseteq X_\lambda$, $f|_{X_\lambda}$ is measurable with respect to 
$(X_\lambda,\Omega_\lambda)$, $\int_{X_\lambda} |f(x)|^2\de\mu_\lambda(x)<\infty$, and identified 
$\mu_\lambda$-a.e. Then, the net
$(L^2_\lambda(X,\mu))_{\lambda\in\Lambda}$ is a representing strictly inductive system
of Hilbert spaces. As a consequence, the space 
\begin{equation}\label{e:eltwoloc}
L^2_{\mathrm{loc}}(X,\mu):=\varinjlim_{\lambda\in\Lambda}L^2_\lambda(X,\mu),
\end{equation}
is the inductive limit of the strictly inductive system of Hilbert spaces
$(L^2_\lambda(X,\mu))_{\lambda\in\Lambda}$, and hence a representing locally Hilbert space. 
\end{lemma}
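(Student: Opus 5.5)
We prove Lemma~\ref{l:letexem} in three steps: each $L^2_\lambda(X,\mu)$ is a Hilbert space, the net is strictly inductive, and the projections commute.

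\emph{Step 1: each $L^2_\lambda(X,\mu)$ is a Hilbert space.} The map $f\mapsto f|_{X_\lambda}$ is a linear bijection of $L^2_\lambda(X,\mu)$ onto $L^2(X_\lambda,\Omega_\lambda,\mu_\lambda)$. Its inverse is extension by zero outside $X_\lambda$. Equivalence classes match under this map: two functions supported in $X_\lambda$ that agree $\mu_\lambda$-a.e.\ on $X_\lambda$ are identified. Transporting the inner product $\langle f,g\rangle_\lambda=\int_{X_\lambda} f\bar g\,\de\mu_\lambda$ therefore makes $L^2_\lambda(X,\mu)$ a Hilbert space.

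\emph{Step 2: inclusion and isometry for $\lambda\leq\nu$.} Take $f\in L^2_\lambda(X,\mu)$. Since $X_\lambda\subseteq X_\nu$, we have $\supp f\subseteq X_\nu$.

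For measurability, note that (sim2) gives $X_\lambda=X_\nu\cap X_\lambda\in\Omega_\lambda\subseteq\Omega_\nu$. For a Borel set $B\subseteq\CC$ with $0\notin B$, we get $(f|_{X_\nu})^{-1}(B)=(f|_{X_\lambda})^{-1}(B)\in\Omega_\lambda\subseteq\Omega_\nu$. If $0\in B$, we add $X_\nu\setminus X_\lambda\in\Omega_\nu$. Hence $f|_{X_\nu}$ is $\Omega_\nu$-measurable.

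By (sim3), $\mu_\nu$ restricted to $\Omega_\lambda$ equals $\mu_\lambda$. Checking on simple functions and passing to the limit by monotone convergence gives
\[
\int_{X_\nu}f\bar g\,\de\mu_\nu=\int_{X_\lambda}f\bar g\,\de\mu_\lambda .
\]
For compatibility of the a.e.\ identifications, suppose $f=g$ $\mu_\lambda$-a.e.\ on $X_\lambda$. The exceptional set lies in $\Omega_\lambda$ and has the same measure under $\mu_\nu$. Both functions vanish on $X_\nu\setminus X_\lambda$. So $f=g$ $\mu_\nu$-a.e., and the inclusion is a well-defined isometry. This gives (lhs1)--(lhs4).

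\emph{Step 3: representing property.} We use Lemma~\ref{l:repof}. Fix $\lambda,\nu\leq\epsilon$. Claim: the orthogonal projection of $L^2_\epsilon(X,\mu)$ onto $L^2_\lambda(X,\mu)$ is multiplication by $\chi_{X_\lambda}$.

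Indeed, $X_\lambda\in\Omega_\epsilon$ by (sim2), so $\chi_{X_\lambda}f$ is $\Omega_\epsilon$-measurable. Its restriction to $X_\lambda$ is $\Omega_\lambda$-measurable, because $\Omega_\lambda$ is the trace of $\Omega_\epsilon$ on $X_\lambda$. Hence $\chi_{X_\lambda}f\in L^2_\lambda(X,\mu)$. Moreover, $f-\chi_{X_\lambda}f$ vanishes on $X_\lambda$ and is therefore orthogonal to $L^2_\lambda(X,\mu)$. Multiplication by $\chi_{X_\lambda}$ is also idempotent and selfadjoint, so it is the orthogonal projection.

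Multiplication operators by $\chi_{X_\lambda}$ and $\chi_{X_\nu}$ commute, so \eqref{e:pale} holds. Lemma~\ref{l:repof} then shows the system is representing, and $L^2_{\mathrm{loc}}(X,\mu)$ is a representing locally Hilbert space by definition.

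The main obstacle is the measure-theoretic bookkeeping in Step 2, that is, the well-definedness of the a.e.\ classes across levels. It is handled entirely by the trace condition in (sim2) together with (sim3).
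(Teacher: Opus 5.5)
Your proof is correct and takes essentially the route the paper relies on. The paper itself only cites Lemma~3.8 of \cite{Gheondea4} for this statement, but your two ingredients are the ones it uses in its own related arguments: for the isometric inclusions, the zero-extension measurability split (preimages of $F\setminus\{0\}$ and of $\{0\}$) together with (sim3), as in the proof of Theorem~\ref{t:avn2}; and for the representing property, the identification of the projections with multiplication by $\chi_{X_\lambda}$, as in Proposition~\ref{p:dintrep}, the only cosmetic difference being that you check commutation at a fixed level $\epsilon$ via Lemma~\ref{l:repof} rather than on the whole inductive limit.
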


The representing locally Hilbert space $L^2_{\mathrm{loc}}(X,\mu)$ induced by the
inductive limit of a strictly inductive system of measure spaces 
$(X_\lambda,\Omega_\lambda,\mu_\lambda)_{\lambda\in\Lambda}$
is called \emph{concrete}. 
The definition of the Hilbert space $L^2_{\lambda}(X,\mu)$ as in Lemma~\ref{l:letexem} is not the 
usual one, but the space we define is canonically unitarily equivalent with the usual Hilbert space. This distinction 
is important and it should be made clear in order to avoid misunderstandings.
The next remark clarifies the structure of $L^2_{\mathrm{loc}}(X,\mu)$, 
cf.\ Remark~3.9 in \cite{Gheondea4}.
 
 \begin{remark}\label{r:eltwoloc} 
Let $f\in L^2_{\mathrm{loc}}(X,\mu)$ be arbitrary. Then, by definition,
there exists $\lambda_0\in\Lambda$ such that $f\in L^2_{\lambda_0}(X,\mu)$ in the sense that
$f$ is actually a function $\phi\colon X\ra \CC$ such that 
$f)\subseteq X_{\lambda_0}$, $f|_{X_{\lambda_0}}$ is $\Omega_{\lambda_0}$-measurable, 
$\int_{X_{\lambda_0}}|f(x)|^2\de\mu_{\lambda_0}<\infty$, and identified $\mu_{\lambda_0}$-a.e. 
Taking into account of the fashion in which the inclusion of the Hilbert 
spaces $L^2_\lambda(X,\mu)$ is defined, on the ground of assumptions (sim1)--(sim3),
$f$ is actually a class of functions $\mu$-a.e.\ on $X$. 
\end{remark}

\begin{remark}\label{r:linlvn} The Abelian locally $C^*$-algebra $L^\infty_{\mathrm{loc}}(X,\mu)$, see Remark~\ref{r:lef}, 
can be viewed as an Abelian locally von Neumann algebra on $L^2_{\mathrm{loc}}(X,\mu)$. 
Indeed, we identify each $\phi\in L^\infty_{\mathrm{loc}}(X,\mu)$ with 
the multiplication operator $M_\phi \colon L^2_{\mathrm{loc}}(X,\mu)\ra L^2_{\mathrm{loc}}(X,\mu)$ defined by 
$M_\phi f:=\phi f$, $f\in L^2_{\mathrm{loc}}(X,\mu)$ and, with notation as in Remark~\ref{r:lef}, $L^\infty_{\mathrm{loc}}(X,\mu)$
is the projective limit, in the category of locally convex $*$-algebras of the projective system of von Neumann algebras
$(L^\infty_\lambda(X,\mu))_{\lambda\in\Lambda}$ and the projective system of $*$-morphisms
$(\Phi_{\lambda,\nu})_{\lambda\leq \nu}$.
\end{remark}

As pointed out in Subsection~\ref{ss:lhs}, each locally Hilbert space $\cH$ is isometrically and densely 
embedded in its completion to a Hilbert space 
denoted by $\widetilde \cH$, where uniqueness is modulo a certain unitary operator, which 
is the inductive limit of the underlying strictly inductive system of Hilbert spaces within the category of Hilbert 
spaces with isometric morphisms.
By Proposition~3.10 in \cite{Gheondea4}, the above abstract completion can be made concrete for the case
of the locally Hilbert space $L^2_{\mathrm{loc}}(X,\mu)$: its completion 
can be realised as the Hilbert space $L^2(X,\widetilde\mu)$, where $\widetilde\mu$ is defined
as in \eqref{e:wimu}.

We need a result slightly more general than Lemma~\ref{l:letexem} in which we consider square integrable functions taking values in some fixed Hilbert space $\mathcal{G}$. The proof follows the same lines and we 
omit it.

\begin{proposition}\label{p:tensorprod} Let $\cG$ be a Hilbert space, 
$(X_\lambda,\Omega_\lambda,\mu_\lambda)_{\lambda\in\Lambda}$ be a strictly inductive system of measure 
spaces and let $(X,\Omega,\mu)$ be its inductive limit. If $\cG$ is a Hilbert space, for each 
$\lambda\in\Lambda$ we denote by $L^2_{\cG,\lambda}(X,\mu)$ the collection of all functions 
$f\colon X\ra \cG$ such that $\supp(f)\subseteq X_\lambda$, $f|_{X_\lambda}$ is measurable with respect to
$(X_\lambda,\Omega_\lambda)$, $\int_{X_\lambda} \|f(x)\|_\cG^2\de\mu_\lambda(x)<\infty$, and identified 
$\mu_\lambda$-a.e.

Then, the net $(L^2_{\cG,\lambda}(X,\mu))_{\lambda\in\Lambda}$ is a representing 
strictly inductive system of Hilbert spaces and, consequently, its inductive limit
\begin{equation}
L^2_{\cG,\mathrm{loc}}(X,\mu):=\varinjlim_{\lambda\in\Lambda} L_{\cG,\lambda}^2(X,\mu),
\end{equation}
exists and it is a representing locally Hilbert space.
\end{proposition}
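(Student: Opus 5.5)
The plan is to mimic the proof of Lemma~\ref{l:letexem}, replacing scalar functions by $\cG$-valued ones, and to check three things in turn: each $L^2_{\cG,\lambda}(X,\mu)$ is a Hilbert space; the net is strictly inductive with isometric inclusions; and the Hermitian projections commute, by way of Lemma~\ref{l:repof}.

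First, fix $\lambda\in\Lambda$. Restriction $f\mapsto f|_{X_\lambda}$ identifies $L^2_{\cG,\lambda}(X,\mu)$ with the usual Bochner space $L^2(X_\lambda,\Omega_\lambda,\mu_\lambda;\cG)$; the inverse is extension by zero outside $X_\lambda$. So it is a Hilbert space with inner product $\langle f,g\rangle_\lambda=\int_{X_\lambda}\langle f(x),g(x)\rangle_\cG\,\de\mu_\lambda(x)$. Measurability can be taken in the weak sense, or in the strong sense if $\cG$ is nonseparable; the choice makes no difference to the rest of the argument.

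Second, let $\lambda\leq\nu$ and $f\in L^2_{\cG,\lambda}(X,\mu)$. Then $\supp f\subseteq X_\lambda\subseteq X_\nu$. By (sim2), $\Omega_\lambda=\{A\cap X_\lambda\mid A\in\Omega_\nu\}\subseteq\Omega_\nu$, so $X_\lambda\in\Omega_\nu$ (intersect $X_\nu$ with $X_\lambda$). Hence $f|_{X_\nu}$, which is $f|_{X_\lambda}$ on $X_\lambda$ and $0$ on $X_\nu\setminus X_\lambda$, is $\Omega_\nu$-measurable. By (sim3) we have $\mu_\nu|_{\Omega_\lambda}=\mu_\lambda$, so the integrals of $\|f\|_\cG^2$ and of $\langle f,g\rangle_\cG$ agree, and $\mu_\lambda$-null sets in $X_\lambda$ are $\mu_\nu$-null. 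Thus a.e.-classes are compatible and the inclusion $L^2_{\cG,\lambda}\subseteq L^2_{\cG,\nu}$ is isometric. This gives (lhs1)--(lhs4).

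Third, for $\lambda,\eta\leq\epsilon$, the orthogonal projection of $L^2_{\cG,\epsilon}(X,\mu)$ onto $L^2_{\cG,\lambda}(X,\mu)$ is multiplication by $\chi_{X_\lambda}$. It is a selfadjoint idempotent, its range is exactly the functions supported in $X_\lambda$, and such functions are automatically $\Omega_\lambda$-measurable on $X_\lambda$ by the trace property in (sim2). Multiplications by $\chi_{X_\lambda}$ and $\chi_{X_\eta}$ commute, since both equal multiplication by $\chi_{X_\lambda\cap X_\eta}$. Lemma~\ref{l:repof} then gives the representing property, and the inductive limit is a representing locally Hilbert space by definition.

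The main obstacle is minor: the range identification in the third step. One must check that a function in $L^2_{\cG,\epsilon}$ which vanishes a.e.\ outside $X_\lambda$ has a representative supported in $X_\lambda$ whose restriction is $\Omega_\lambda$-measurable. This is exactly where $\Omega_\lambda=\Omega_\nu\cap X_\lambda$ and $X_\lambda\in\Omega_\nu$ are used. For $\cG$-valued maps one checks it on the scalar functions $x\mapsto\langle f(x),g\rangle$, for $g\in\cG$.
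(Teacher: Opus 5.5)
Your proposal is correct and follows essentially the route the paper intends. The paper omits the proof, saying it runs along the same lines as the scalar Lemma~\ref{l:letexem}: verify the isometric inclusions from (sim2)--(sim3), then identify the Hermitian projections with multiplication by $\chi_{X_\lambda}$ so that Lemma~\ref{l:repof} applies. This is exactly your argument; compare Proposition~\ref{p:dintrep} and the measurability check in the proof of Theorem~\ref{t:avn2}.
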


\begin{remark}\label{r:tensorprod}  With notation as in Proposition~\ref{p:tensorprod}, for each 
$\lambda\in\Lambda$ there is a natural identification of the Hilbert spaces $L^2_{\cG,\lambda}(X,\mu)$ 
and $\cG\otimes L^2_\lambda(X,\mu)$. On elementary tensors this identification acts by 
$\cG\otimes L^2_\lambda(X,\mu)\ni g\otimes f\mapsto f g\in L^2_{\cG,\lambda}(X,\mu)$, where $(f \cdot g)x = f(x) g$, for all 
$x \in X$.
\end{remark}
\section{A Review of Direct Integrals of Hilbert Spaces}\label{s:rdihs}
Direct integrals of Hilbert spaces is an essential tool in the reduction theory of a von Neumann algebra by its 
centre, see the seminal paper of J.~von Neumann \cite{vonNeumann}.
There are a few more or less equivalent approaches to direct integrals of Hilbert Spaces, such as 
J.~Dixmier \cite{Dixmier}, A.~Ramsey \cite{Ramsey}, O.~Bratelli and D.W.~Robinson \cite{BratelliRobinson}, 
R.~Kadison and J.R. Ringrose \cite{KadisonRingrose}. 

\subsection{Borel Bundles of Hilbert Spaces}\label{ss:bbhs} 
Here we follow P.S.~Muhly \cite{Muhly}, which is essentially based 
on A.~Ramsey \cite{Ramsey}. Some definitions referring to topological spaces are as in S.M.~Srivastava \cite{Srivastava}. 
Let $X$ be a nonempty set and $\bH=\{\cH_x\}_{x\in X}$ be a family of Hilbert spaces 
indexed by $X$. We use the notation
\begin{equation*}
X\ast \bH :=\{(x,h)\mid x\in X,\ h\in \cH_x\}
\end{equation*}
and let $\pi\colon X\ast \bH\ra X$ denote the canonical projection $\pi(x,h)=x,$ for all $(x,h)\in X\ast \bH$. The pair 
$(X\ast \bH,\pi)$, or simply $X\ast \bH$, is called a \emph{Hilbert space bundle over $X$}. Technically, the vector 
bundle of Hilbert spaces $X\ast \bH$ is the \emph{disjoint  union} of the family of Hilbert spaces 
$\bH$, denoted by $\bigsqcup\limits_{x\in X}\cH_x$. This means that we identify the component $\cH_x$ of the 
disjoint union with the fiber $\{x\}\times \cH_x$ from $X\ast \bH$.

A \emph{section}, 
or a \emph{vector field}, of the bundle, 
denoted by $f\colon X\ra X\ast \bH$, is a right inverse of $\pi$, that is, $\pi\circ 
f=\mathrm{Id}_X$. This means that $f(x)\in \cH_x$ for all $x\in X$. Technically, the collection of all sections of the 
bundle $X\ast \bH$, denoted by $\cF_\bH(X)$, coincides with the product of the family of Hilbert spaces $\bH$, 
that is, $\prod\limits_{x\in X}\cH_x$, which is the collection of all maps $f\colon X\ra  \bigsqcup\limits_{x\in X}\cH_x$ such 
that $f(x)\in \cH_x$ for all $x\in X$. Here we identify $\cH_x$ with the fibre $\{x\}\times \cH_x$. This is important 
since $\cF_\bH(X)$ is naturally organised as a vector space, with respect to pointwise addition and multiplication 
with scalars. With respect to this, we identify $f\in \cF_\bH(X)$ with $(f(x))_{x\in X}$, where $f(x)\in \cH_x,$ for all 
$x\in X$.

In the following, a \emph{Borel space} is the same with a \emph{measurable space}, that is, a pair $(X,\Sigma)$, 
where $X$ is a nonempty set and $\Sigma$ is a $\sigma$-algebra on $X$. If $X$ is a Polish space (metrisable, 
complete, and separable) and $\Sigma$ is the Borel $\sigma$-algebra generated by the topology of $X$, then 
$(X,\Sigma)$ is called a \emph{standard space}. An \emph{analytic Borel space} is a countably generated Borel 
space which is the image of a standard Borel space under a Borel (measurable) mapping, e.g.\ see 
\cite{Srivastava}.

\begin{definition}\label{d:bb} 
A \emph{Borel bundle} is a Hilbert space bundle $(X\ast \bH,\pi)$, such that $(X\ast\bH,\Theta)$ 
is a Borel space and subject to the following conditions.
\begin{itemize}
\item[(1)] $(X,\Sigma)$ is a Borel space, where $\Sigma:=\{ E \subseteq X\mid \pi^{-1}(E)\in \Theta\}$.
\item[(2)] There exists a sequence $(f_n)_n$, with $f_n\in\cF_\bH(X)$ for all $n\in\NN$, called a \emph{fundamental 
sequence}, subject to the following conditions.
\begin{itemize}
\item[(a)] For each $n\in\NN$, the function $\tilde f_n\colon X\ast\bH\ra \CC$, defined by the formula 
$\tilde f_n(x,h):=\langle f_n(x),h\rangle_{\cH_x}$, is Borel (measurable).
\item[(b)] For each $m,n\in\NN$, the function $X\ni x\mapsto \langle f_m(x),f_n(x)\rangle_{\cH_x}$ is Borel.
\item[(c)] The set of functions $\{\tilde f_n\mid n\in\NN\}$ together with $\pi$, separate the points of the 
bundle $X\ast \bH$.
\end{itemize}
\end{itemize}
If, in addition,  the Borel space $(X\ast \bH,\Theta)$ is a standard (analytic Borel) space, then the Borel bundle 
$(X\ast \bH,\pi)$ is called \emph{standard} (\emph{analytic Borel}).
\end{definition}

\begin{remark}\label{r:div} 
Assume that $(X\ast \bH,\pi)$ is a Borel bundle as in Definition~\ref{d:bb}. 
\begin{enumerate}
\item[(i)] Condition 2.(c) means that, for any $x\in X$ and any distinct $h,k\in\cH_x$, there exists $n\in\NN$ such that 
$\langle f_n(x),h\rangle_{\cH_x}\neq \langle f_n(x),k\rangle_{\cH_x}$.

\item[(ii)] An arbitrary section $f\in\cF_\bH(X)$ is Borel (measurable) if and only if, for each $n\in\NN$, the map $X\ni x\mapsto \langle f(x),f_n(x)\rangle_{\cH_x}$ is Borel.

\item[(iii)] As a consequence of condition 2.(c), for each $x\in X$, the sequence of vectors $(f_n(x))_{n\in\NN}$ is total 
in $\cH_x$, in particular $\cH_x$ is separable.

\item[(iv)] Let 
\begin{equation*}
   B(X\ast \bH):=\left\{f\in\cF_\bH(X)\mid f\mbox{ is a Borel section}\right\}. 
\end{equation*}
The vector space $B(X\ast \bH)$ is a module over the space of Borel functions on $X$, that is  $B(X):=\{\phi\colon X\ra\CC\mid \phi\mbox{ Borel function}\}$.

\item[(v)] By means of the Gram-Schmidt orthonormalisation process, one can assume without loss of generality 
that the fundamental sequence $(f_n)_{n\in\NN}$ has the property that, for each $x\in X$, the sequence 
$(f_n(x))_{n\in\NN}$ is an orthonormal basis of $\cH_x$. Then, 
for $f\in B(X\ast\bH)$ and, for each $x\in X$, we have
\begin{equation*}
f(x)=\sum_{n=1}^\infty \alpha_n(x) f_n(x),
\end{equation*}
where $(\alpha_n(x))_{n\in\NN}\in \ell^2(\mathbb{N})$. By the Parseval identity, we have
\begin{equation*}
\|f(x)\|_{\cH_x}^2=\sum_{n=1}^\infty |\alpha_n(x)|^2,
\end{equation*}
in particular, since $\alpha_n(x)=\langle f(x),f_n(x)\rangle_{\cH_x}$, and hence $\alpha_n\colon X\ra\CC$ is Borel 
for all $n\in\NN$, it follows that the mapping $X\ni x\mapsto \|f(x)\|_{\cH_x}$ is Borel.
\end{enumerate}
\end{remark}
The following fact is useful.
\begin{proposition}[Proposition~3.2 in \cite{Muhly}]\label{p:bm} Let $X$ be an analytic Borel (standard) 
space, $\bH=\{\cH_x\}_{x\in X}$ a 
family of Hilbert spaces indexed by $X$, and let $(f_n)_{n\in\NN}$ be a sequence of sections of the bundle 
$X\ast \bH$ that satisfies the conditions 2.(b) and 2.(c) of Definition~\ref{d:bb}. Then, there exists a unique Borel 
structure $(X\ast \bH,\Theta)$ such that $(X\ast \bH,\pi)$ becomes an analytic Borel (standard) 
bundle such that $(f_n)_{n\in\NN}$ is a fundamental sequence of $X\ast \bH$.
\end{proposition}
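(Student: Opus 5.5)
The plan is to build $\Theta$ as the $\sigma$-algebra generated by the functions that must be Borel, identify $(X\ast\bH,\Theta)$ with a Borel subset of $X\times\ell^2$, and read off both existence and uniqueness from that picture.

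\emph{Normalisation.} By the Gram--Schmidt process pointwise, as in Remark~\ref{r:div}(v), we may assume that $(f_n(x))_n$ is an orthonormal basis of $\cH_x$ after deleting zero vectors. Condition 2.(b) keeps the new coefficients Borel in $x$, since Gram--Schmidt involves only inner products and square roots of Borel functions. Condition 2.(c) makes the resulting system total in each fibre. The sets $X_d=\{x\mid \dim\cH_x=d\}$, for $d\in\{0,1,\dots,\infty\}$, are then Borel because they are determined by the Gram matrices. After reindexing, this gives a Borel orthonormal frame $(e_n)$ with $e_n(x)=0$ for $n>\dim \cH_x$.

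\emph{Existence.} Define
\[
\Phi\colon X\ast\bH\ra X\times\ell^2(\NN),\qquad \Phi(x,h)=\bigl(x,(\langle h,e_n(x)\rangle)_n\bigr).
\]
This map is injective by 2.(c) and Parseval. Its range is
\[
\{(x,\xi)\mid \xi_n=0 \mbox{ for } n>\dim\cH_x\},
\]
which is Borel in $X\times\ell^2$ because each $X_d$ is Borel. If $X$ is standard (respectively analytic), then $X\times\ell^2$ is standard (respectively analytic), and a Borel subset of it is again standard (respectively analytic). Let $\Theta$ be the pullback of this structure under $\Phi$. Then $\pi$ is Borel. Each $\tilde f_n$ is Borel, since it is a fixed Borel combination of the coordinates of $\Phi$: $\langle f_n(x),h\rangle=\sum_k\overline{c_{nk}(x)}\,\xi_k$ with Borel coefficients $c_{nk}(x)=\langle f_n(x),e_k(x)\rangle$. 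The induced $\Sigma$ equals the original $\sigma$-algebra on $X$. So the bundle is standard (respectively analytic) with fundamental sequence $(f_n)$.

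\emph{Uniqueness.} This is the main obstacle. Let $\Theta'$ be any analytic Borel structure with the required properties. Under $\Theta'$, $\pi$ and all $\tilde f_n$ are Borel, so each $\tilde e_n$ is Borel, and hence $\Phi$ is $\Theta'$-measurable. Also $\Theta'$ is countably separated through $\pi$ and $(\tilde f_n)$, because $X$ is countably separated. The step I would rely on is the Souslin--Blackwell theorem: a countable family of Borel functions separating the points of an analytic Borel space generates its whole Borel structure. Consequently $\Theta'$ coincides with the $\sigma$-algebra generated by $\pi$ and $(\tilde f_n)$. That $\sigma$-algebra is precisely $\Phi^{-1}$ of the Borel structure of $X\times\ell^2$, since the Borel structure of $\ell^2$ is generated by the coordinate functionals, and it equals $\Theta$. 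Hence $\Theta'=\Theta$.
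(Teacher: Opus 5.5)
The paper does not prove this statement. It quotes it from Muhly, so there is no internal argument to compare yours with.

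Your proof is correct and follows the standard route for this result:
\begin{itemize}
\item a measurable Gram--Schmidt process, giving a Borel orthonormal frame $(e_n)$ and Borel dimension sets $X_d$;
\item the Borel bijection $\Phi$ of $X\ast\bH$ onto a Borel subset of $X\times\ell^2(\NN)$, which yields existence together with analyticity (respectively standardness);
\item the Blackwell--Souslin unique structure theorem for uniqueness.
\end{itemize}

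Only minor points remain.

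\emph{Conjugation.} Your series $\sum_k\overline{c_{nk}(x)}\,\xi_k$ equals $\langle h,f_n(x)\rangle$, the complex conjugate of $\tilde f_n(x,h)$. The correct expression is $\sum_k c_{nk}(x)\overline{\xi_k}$. This does not affect measurability.

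\emph{The induced $\Sigma$.} Your claim that $\Sigma$ is the original $\sigma$-algebra of $X$ needs its one-line proof. If $\pi^{-1}(E)=\Phi^{-1}(B)$ with $B$ Borel, pull back along the Borel zero section $x\mapsto(x,0)$. This gives $E=\{x\in X\mid (x,0)\in B\}$, which is Borel.

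\emph{Uniqueness.} You can shortcut this step. The $\Theta'$-measurability of $\Phi$ says exactly that $\Theta\subseteq\Theta'$. Now $\Theta$ is countably generated and separates points, and it sits inside the analytic structure $\Theta'$. Blackwell's theorem then gives $\Theta=\Theta'$ directly.

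You rightly assumed analyticity of $\Theta'$, and it is genuinely needed. Adjoin to $\Theta$ a non-Borel subset of a nonzero fibre. The result is a strictly finer structure that still satisfies 2.(a)--(c) and induces the same $\Sigma$ on $X$.
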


\begin{remark}\label{r:unic} 
If $X$ is an analytic Borel space, by the uniqueness in Proposition~\ref{p:bm}, the definition of a 
Borel bundle as in
Definition~\ref{d:bb} does not depend on the fundamental sequence $(f_n)_{n\in\NN}$ of sections of the bundle 
$X\ast \bH$.
\end{remark}
The following result is a straightforward verification.
\begin{proposition}\label{p:di} 
If $(X\ast \bH,\pi)$ is a Borel bundle and $\mu$ is a measure on the Borel space 
$(X,\Sigma)$, then 
\begin{equation*}
\cL^2(X\ast \bH,\mu):=\{f\in B(X\ast\bH)\mid \int_X \|f(x)\|_{\cH_x}^2\de\mu(x)<\infty\},
\end{equation*}
is a pre-Hilbert space with respect to the
pre-inner product on $\cL^2(X\ast\bH,\mu)$, which is defined in the natural fashion,
\begin{equation*}
\langle f,g\rangle_{\cL^2(X\ast \bH,\mu)} :=\int_X \langle f(x),g(x)\rangle_{\cH_x}\de \mu(x),\quad f,g\in \cL^2(X\ast\bH),\mu).
\end{equation*}
\end{proposition}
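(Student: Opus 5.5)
The proof is a direct check that $\cL^2(X\ast\bH,\mu)$ is a vector space on which the displayed formula is well defined and satisfies the axioms of a pre-inner product (positive semidefinite, not necessarily definite). I would argue in three steps, using mainly the measurability facts in Remark~\ref{r:div}.

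\emph{Step 1: the integrands are measurable.} For $f\in B(X\ast\bH)$, Remark~\ref{r:div}.(v) shows that $x\mapsto\|f(x)\|_{\cH_x}$ is Borel. This uses an orthonormalised fundamental sequence; the Gram--Schmidt reduction is the one point needing care, and Remark~\ref{r:div} already grants it. For $f,g\in B(X\ast\bH)$ I would then write, by Parseval,
\begin{equation*}
\langle f(x),g(x)\rangle_{\cH_x}=\sum_{n=1}^\infty \alpha_n(x)\overline{\beta_n(x)},
\end{equation*}
where $\alpha_n(x)=\langle f(x),f_n(x)\rangle_{\cH_x}$ and $\beta_n(x)=\langle g(x),f_n(x)\rangle_{\cH_x}$ are Borel. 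A pointwise limit of Borel functions is Borel, so $x\mapsto\langle f(x),g(x)\rangle_{\cH_x}$ is Borel. Alternatively, polarization reduces this to Borelness of $x\mapsto\|(f+i^kg)(x)\|_{\cH_x}$, since $B(X\ast\bH)$ is a vector space by Remark~\ref{r:div}.(iv).

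\emph{Step 2: $\cL^2(X\ast\bH,\mu)$ is a vector subspace and the integral converges.} Closure under scalar multiplication is immediate. For sums, the pointwise inequality
\begin{equation*}
\|f(x)+g(x)\|^2\leq 2\|f(x)\|^2+2\|g(x)\|^2
\end{equation*}
together with $f+g\in B(X\ast\bH)$ gives closure under addition. By Cauchy--Schwarz in each fibre $\cH_x$ and then in $L^2(\mu)$,
\begin{equation*}
|\langle f(x),g(x)\rangle|\leq \|f(x)\|\,\|g(x)\|,
\end{equation*}
and the right-hand side is integrable. Hence the integral defining $\langle f,g\rangle$ converges absolutely and is a finite complex number.

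\emph{Step 3: the axioms.} Sesquilinearity and Hermitian symmetry follow fibrewise from those of $\langle\cdot,\cdot\rangle_{\cH_x}$, using linearity of the integral. Positivity follows from
\begin{equation*}
\langle f,f\rangle=\int_X\|f(x)\|^2\,\de\mu(x)\geq 0.
\end{equation*}
Definiteness fails only on sections vanishing $\mu$-a.e., which is why the result is stated for a pre-Hilbert space. The only genuinely nonformal point in the whole argument is the Borel measurability in Step~1, and that is already supplied by Remark~\ref{r:div}.
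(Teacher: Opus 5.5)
Your proposal is correct and is the routine verification the paper has in mind. The paper gives no argument beyond calling it a straightforward verification; you supply it: Borel measurability of $x\mapsto\langle f(x),g(x)\rangle_{\cH_x}$ via Remark~\ref{r:div}, closure of $\cL^2(X\ast\bH,\mu)$ under the vector operations, absolute convergence of the integral by fibrewise Cauchy--Schwarz, and the pre-inner product axioms, with definiteness only after the $\mu$-a.e.\ identification of Definition~\ref{d:di}.
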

\begin{definition}\label{d:di}
With notation and assumptions as in Proposition~\ref{p:di}, the quotient space
\begin{equation*}
 \int_X^\oplus \cH_x\de\mu(x):=\cL^2(X\ast\bH,\mu)/\sim,
\end{equation*}
where the equivalence relation $\sim$ 
is the identification of sections modulo subsets of $X$ of $\mu$-measure zero,
is called the \emph{direct integral Hilbert space} of the bundle $(X\ast \bH,\pi)$. Sometimes it is useful to use the 
simpler notation
\begin{equation}\label{e:ledoi}
L^2(X\ast\bH,\mu):=\int_X^\oplus \cH_x\de\mu(x).
\end{equation}
\end{definition}

For further use we make explicit the case of a standard Borel bundle which is ``trivial" in the vector bundle sense.

\begin{remark}\label{r:id} 
Let $(X,\Sigma,\mu)$ be a standard Borel measure space and $\cG$ a separable Hilbert space. Then the Hilbert space $\cG\otimes L^2(X,\mu)$ can be naturally identified with the direct integral Hilbert space 
$\int_X^\oplus \cG_x\de\mu(x)$, where $\cG_x$ is a copy of $\cG$, for all $x\in X$. To see this, we let 
$\bG=\{\cG_x\mid x\in X\}$ and consider the Hilbert space bundle $(X\ast \bG,\pi)$. In an equivalent 
formulation, $X\ast \bG=X\times \cG$, which is called a \emph{trivial} bundle over $X$.
Since $X$ is separable, let $X_0$ be a countable dense subset of it and, for each $p\in X_0$, 
by Urysohn Lemma we can get a continuous function $g_p\colon X\ra \CC$ with bounded support and 
$g_p(p)=1$. Since $\cG$ is 
separable, let $(h_m)_{m\in\NN}$ be an orthonormal basis of $\cG$ and, for each $m\in\NN$ and $p\in X_0,$ 
let $f_{p,m}\colon X\ra \cG$ be the function defined by $f_{p,m}(x)=g_p(x)h_m$, for all $x\in X$. The set
$\{f_{p,m}\mid p\in X_0,\ m\in\NN\}$ can be seen as a sequence since $X_0\times \NN$ is countable and we can 
check that this sequence satisfies the properties 2.(b) and 2.(c) from Definition~\ref{d:bb}. Indeed, for each 
$p,q\in X_0$ and each $m,k\in\NN$, for all $x\in X$ we have 
\begin{equation*}\langle f_{p,m}(x),f_{q,k}(x)\rangle_{\cH_x}= \langle g_p(x)h_m,g_q(x)h_k\rangle_{\cH_x}
= g_p(x)\overline{g_q(x)}\langle h_m,h_k\rangle_{\cH_x}=\delta_{m,k} g_p(x)\overline{g_q(x)},
\end{equation*}
hence condition 2.(b) from Definition~\ref{d:bb} is satisfied. Also, for arbitrary but fixed 
$x\in X$, let $h,k\in\cG$ be such that $\langle f_{p,m}(x),h\rangle_{\cH_x}=\langle f_{p,m}(x),k\rangle_{\cH_x}$ for 
all $p\in X_0$ and $m\in\NN$, that is,
\begin{equation*}
g_p(x)\langle h_m,h-k\rangle_{\cH_x}=0,\quad p\in X_0,\ m\in\NN.
\end{equation*}
Since $X_0$ is dense in $X$, there exists $p\in X_0$ such that $g_p(x)\neq 0$ and then, since 
$(h_m)_{m\in\NN}$ is total in $\cG$, it follows that $h=k$. In view of Remark~\ref{r:div}, 
this means that condition 2.(c) in Definition~\ref{d:bb} is satisfied as well.

Then,
by Proposition~\ref{p:bm} it follows that there exists a unique standard Borel structure 
$(X\ast\bG,\Theta)$ such that $(X\ast\bG,\pi)$ becomes a standard Borel bundle with respect to 
which $\{f_{p,m}\mid p\in X_0,\ m\in\NN\}$ is a fundamental sequence. This implies that we can define the 
direct integral Hilbert space $\int_X^\oplus \cG_x\de\mu(x)$ as in Proposition~\ref{p:di} and Definition~\ref{d:di}.
Because all $\cG_x$ are copies of the same Hilbert space $\cG$, it follows that a section $f\in\cF_\bG(X)$ is 
nothing else but a function $f\colon X\ra \cG$. By fixing an orthonormal basis $(h_m)_{m\in\NN}$ of $\cG$, 
this means that
for any $f\in\cF_\bG(X)$ there exists uniquely a sequence $(\alpha_m)_{m\in\NN}$ of functions 
$\alpha_m\colon X\ra \CC$ such that
\begin{equation*}
f(x)=\sum_{m\in\NN} \alpha_m(x) h_m,\quad x\in X,
\end{equation*}
with the property that, for $\mu$-a.e.\ $x\in X$, we have $(\alpha_m(x))_{m\in\NN}\in \ell^2(\mathbb{N})$. Since, 
actually $\alpha_m(x)=\langle f(x),h_m\rangle_\cG$ $\mu$-a.e.\ $x\in X$ and all $m\in\NN$, it follows that 
$f\in B(X\ast \bG)$ if and only if $\alpha_m$ is Borel for all $m\in\NN$ and hence, with notation as in 
Proposition~\ref{p:di}, $f\in\cL^2(X\ast \bG,\mu)$ if and only if $\alpha_m$ is Borel for all $m\in\NN$ and 
\begin{equation*}
\sum_{m\in\NN}\int_X |\alpha_m(x)|^2\; \de\mu(x)=\int_X \sum_{m\in\NN} |\alpha_m(x)|^2\;\de\mu(x)=
\int_X \|f(x)\|^2_{\cG}\; \de\mu(x)<\infty.
\end{equation*} 

Since $X$ is a standard Borel space, the Hilbert space $L^2(X,\mu)$ is separable and let $(\phi_m)_{m\in\NN}$ 
be an orthonormal basis of it. Then $\{h_m\otimes \phi_k\mid m,k\in\NN\}$ is an orthonormal basis of 
$\cG\otimes L^2(X,\mu)$. We observe that, for each $m,k\in\NN$ we have 
$\phi_k h_m\colon X\ra \cG$ and, actually, $\phi_k h_m\in L^2(X\ast \cG,\mu)$. Since, in view of the 
Definition~\ref{d:di} and notation as in Proposition~\ref{p:di}, we have
\begin{align*}
\langle \phi_k h_m,\phi_l h_n\rangle_{L^2(X\ast \bG, \mu)} & =\int_X\langle \phi_k(x) h_m,\phi_l(x) h_n\rangle_\cG\de\mu(x) \\
& = \int_X \phi_k(x)\ol{\phi_l(x)}\de\mu(x) \langle h_m,h_n\rangle_\cG
=\delta_{k,l}\delta_{m,n},
\end{align*}
for $m,n,k,l\in\NN$, it follows that $\{ \phi_k h_m\mid k,m\in\NN\}$ is an orthonormal basis of $L^2(X\ast \bG,\mu)$. The identification 
of these orthonormal bases provides the required unitary identification of the Hilbert spaces 
$\cG\otimes L^2(X,\mu)$ with $L^2(X\ast \bG,\mu)$. Further on, we identify the Hilbert spaces 
$\cG\otimes L^2(X,\mu)$ and $L^2_\cG(X,\mu)$ as in Remark~\ref{r:tensorprod}.
\end{remark}

\subsection{Decomposable and Diagonalisable Operators} \label{ss;ddo}
We continue with assumptions and notation as in the previous subsection. So, 
$L^2(X\ast\bH,\mu)=\int_X^\oplus \cH_x\de\mu(x)$ is the 
direct integral Hilbert space associated to an analytic Borel bundle $(X\ast \bH,\pi)$ whose elements 
are sections $f\in B(X\ast\bH,\mu)$, identified $\mu$-a.e., such that $\int_X \|f(x)\|_{\cH_x}^2\de\mu(x)<\infty$. 
Let $T\colon L^2(X\ast\bH,\mu)\ra L^2(X\ast \bH,\mu)$ be a 
bounded linear operator. We call $T$ \emph{decomposable} if there exists a family $\{T_x\}_{x\in X}$, where 
$T_x\in \cB(\cH_x)$, for all $x\in X$, such that, for all $f\in L^2(X\ast\bH,\mu)$
 we have 
\begin{equation*}
(Tf)(x)=T_x f(x),\quad \mu\mbox{-a.e. } x\in X.
\end{equation*}
Also, a decomposable operator $T$ is called \emph{diagonalisable} if there exists $\Phi\in L^\infty(X,\mu)$, such 
that $T_x$ is the multiplication operator with $\Phi(x)$ $\mu$-a.e.\ $x\in X$, that is,
for all $f\in L^2(X\ast\bH,\mu)$, we have
\begin{equation*}
(Tf)(x)=\Phi(x) f(x),\quad  \mu\mbox{-a.e. } x\in X.
\end{equation*}

We will use the following theorem, e.g.\ see \cite{Dixmier}, 
in Theorem \ref{t:locdecdeg}.

\begin{theorem} \label{t:decdeg}
Let 
\begin{equation*}\cB_{\mathrm{dec}}(L^2(X\ast\bH,\mu)):=\{T\in \cB(L^2(X\ast\bH,\mu))\mid T\mbox{ decomposable}\},\end{equation*} 
and
\begin{equation*}
\cB_{\mathrm{diag}}(L^2(X\ast\bH,\mu)):=\{T\in \cB(L^2(X\ast\bH,\mu))\mid T\mbox{ diagonalisable}\}.
 \end{equation*}
Then $\cB_{\mathrm{dec}}(L^2(X\ast\bH,\mu))$ and $\cB_{\mathrm{diag}}(L^2(X\ast\bH,\mu))$ are von Neumann algebras, 
the latter is Abelian, and each of it is the commutant of the other.
\end{theorem}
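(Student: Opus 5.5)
The plan is to identify $\cB_{\mathrm{diag}}(L^2(X\ast\bH,\mu))$ with the image of $L^\infty(X,\mu)$ under the multiplication representation $\Phi\mapsto M_\Phi$, $(M_\Phi f)(x)=\Phi(x)f(x)$. I will then prove the two commutation relations
\begin{equation*}
\cB_{\mathrm{dec}}=\cB_{\mathrm{diag}}^\prime \quad\text{and}\quad \cB_{\mathrm{diag}}=\cB_{\mathrm{dec}}^\prime.
\end{equation*}
Both algebras are then commutants of self-adjoint sets, hence von Neumann algebras, and $\cB_{\mathrm{diag}}$ is obviously Abelian. Throughout I assume $\mu$ is $\sigma$-finite, which is the standing convention for direct integrals. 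By Remark~\ref{r:div}(v) I fix a fundamental sequence $(f_n)_{n\in\NN}$ with $(f_n(x))_{n}$ an orthonormal basis of $\cH_x$ (allowing zero vectors when $\dim\cH_x$ is finite). I also fix an exhausting sequence $X_1\subseteq X_2\subseteq\cdots$ of sets of finite measure, so that $\chi_{X_k}f_n\in L^2(X\ast\bH,\mu)$.

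\textbf{Step 1 (easy inclusions).} The map $\Phi\mapsto M_\Phi$ is a $*$-homomorphism with $\|M_\Phi\|\le\|\Phi\|_\infty$; $\cB_{\mathrm{diag}}$ is its range, hence a unital Abelian $*$-algebra. A decomposable $T$ commutes with every $M_\Phi$ by pointwise computation, so $\cB_{\mathrm{dec}}\subseteq\cB_{\mathrm{diag}}^\prime$. Similarly, $\cB_{\mathrm{dec}}$ is a unital $*$-algebra: if $(Tf)(x)=T_xf(x)$ a.e., then $(T^*g)(x)=T_x^*g(x)$ a.e. This requires $x\mapsto T_x^*$ to be weakly Borel, which follows from Borel measurability of $x\mapsto\langle T_xf_m(x),f_n(x)\rangle$, itself obtained by testing $T$ against the sections $\chi_{X_k}f_n$.

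\textbf{Step 2 (main obstacle: $\cB_{\mathrm{diag}}^\prime\subseteq\cB_{\mathrm{dec}}$).} Let $T$ commute with all $M_\Phi$, and put $g_{n,k}:=T(\chi_{X_k}f_n)$. Commutation with $M_{\chi_{X_k}}$ makes these sections consistent in $k$, so they define Borel sections $g_n$ with $\chi_{X_k}g_n=g_{n,k}$ a.e. For any finitely supported rational (Gaussian-rational) coefficient family $r$, set $s_r=\sum r_nf_n$ and $t_r=\sum r_ng_n$. Commutation with $M_{\chi_E}$ gives, for every Borel $E\subseteq X_k$,
\begin{equation*}
\int_E\|t_r(x)\|^2\de\mu(x)\le\|T\|^2\int_E\|s_r(x)\|^2\de\mu(x).
\end{equation*}
Since this holds for all such $E$, it yields the pointwise bound $\|t_r(x)\|\le\|T\|\,\|s_r(x)\|$ for $x$ outside a null set $N_r$. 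There are only countably many $r$, so off the null set $N=\bigcup_rN_r$ the map $s_r(x)\mapsto t_r(x)$ is well defined, rational-linear and bounded. It therefore extends to $T_x\in\cB(\cH_x)$ with $\|T_x\|\le\|T\|$; on $N$ I set $T_x=0$.

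To conclude, I must show $(Tf)(x)=T_xf(x)$ a.e. for every $f$. This holds for all sections $\sum\phi_nf_n$ with finitely many bounded Borel $\phi_n$ supported in some $X_k$: first for simple $\phi_n$ with rational values, using commutation with the $M_\Phi$, and then by bounded approximation. Such sections are dense in $L^2(X\ast\bH,\mu)$. For general $f$, choose such $f^{(j)}\to f$ in norm and pass to a subsequence converging a.e. Then $Tf^{(j)}\to Tf$ a.e. along a further subsequence, while $T_xf^{(j)}(x)\to T_xf(x)$ by the uniform bound on $T_x$. Hence $T$ is decomposable. The delicate point is the measure-theoretic passage from integral inequalities to pointwise ones, which only works because the family of test sections is countable.

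\textbf{Step 3 ($\cB_{\mathrm{diag}}$ is a von Neumann algebra, and conclusion).} By Step 2, $\cB_{\mathrm{dec}}=\cB_{\mathrm{diag}}^\prime$, hence $\cB_{\mathrm{dec}}^\prime=\cB_{\mathrm{diag}}^{\prime\prime}$. It remains to show $\cB_{\mathrm{diag}}$ is weakly closed, so that the bicommutant theorem gives $\cB_{\mathrm{diag}}^{\prime\prime}=\cB_{\mathrm{diag}}$. The representation $\Phi\mapsto M_\Phi$ is normal, i.e.\ weak$^*$-to-weak-operator continuous, since vector functionals pull back to $L^1$ functions $x\mapsto\langle f(x),g(x)\rangle$. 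Its kernel consists of the $\Phi$ vanishing a.e.\ on $Y=\{x\mid \cH_x\neq0\}$. Replacing $L^\infty(X,\mu)$ by $L^\infty(Y,\mu)$ makes the representation faithful. A faithful normal $*$-representation of a von Neumann algebra has weakly closed range, since the range is a $W^*$-algebra and a $*$-isomorphism between von Neumann algebras is automatically normal. Therefore $\cB_{\mathrm{diag}}$ is a von Neumann algebra, and $\cB_{\mathrm{dec}}^\prime=\cB_{\mathrm{diag}}$, completing the proof.
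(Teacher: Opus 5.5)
The paper gives no proof of this statement. It quotes the result from Dixmier and uses it as a black box in Theorem~\ref{t:locdecdeg}. Your proposal is a correct self-contained proof, by a different route from the common textbook one. That route splits $X$ into the pieces $\{x\mid \dim\cH_x=n\}$ and passes to $\ell^2_n\otimes L^2$, as the paper itself does in Remark~\ref{r:ampdiag}. It then reduces $\cB_{\mathrm{diag}}'\subseteq\cB_{\mathrm{dec}}$ to the maximal abelianness of $L^\infty$ plus a matrix-entry argument. Your Step~2 instead builds the fibres $T_x$ directly on the countable $\mathbb{Q}(i)$-span of a measurable orthonormal frame, and upgrades integral bounds to pointwise ones off a single null set. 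This avoids the dimension decomposition and gives $\|T_x\|\le\|T\|$ a.e.\ for free. You tacitly need that bound in Step~1 for $x\mapsto T_x^*g(x)$ to be square integrable. That part of Step~1 is redundant anyway, since $\cB_{\mathrm{dec}}=\cB_{\mathrm{diag}}'$ is automatically a von Neumann algebra.

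Two points need tightening. First, the reason you give in Step~3 is circular: that the range is a $W^*$-algebra is exactly what must be shown, and automatic normality of $*$-isomorphisms between von Neumann algebras presupposes it. The fix has three steps:
\begin{itemize}
\item By weak$^*$-to-weak-operator continuity and Banach--Alaoglu, the image of the unit ball of $L^\infty(Y,\mu)$ is weak-operator compact.
\item Faithfulness makes $\Phi\mapsto M_\Phi$ isometric, so this image is exactly the unit ball of $\cB_{\mathrm{diag}}$.
\item Kaplansky's density theorem then forces $\cB_{\mathrm{diag}}$ to be weak-operator closed.
\end{itemize}
Alternatively, you can skip normality. If $S\in\cB_{\mathrm{dec}}'$, then $S\in\cB_{\mathrm{diag}}'=\cB_{\mathrm{dec}}$. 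Testing $S$ against the decomposable operators with fibres $h\mapsto\langle h,f_n(x)\rangle f_m(x)$ shows $S_x$ is scalar a.e. Hence $\cB_{\mathrm{dec}}'\subseteq\cB_{\mathrm{diag}}$, so $\cB_{\mathrm{diag}}$ is itself a commutant.

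Second, your $\sigma$-finiteness assumption is not merely a convention; it is necessary. Take counting measure on the Borel sets of $[0,1]$ with $\cH_x=\CC$, so that $L^2(X\ast\bH,\mu)=\ell^2([0,1])$. For non-Borel $A$, multiplication by $\chi_A$ is decomposable in the paper's sense and is a strong limit of diagonalisable operators, yet it is not diagonalisable. So the statement as quoted silently relies on Dixmier's standing hypotheses, and your proof makes that hypothesis explicit.
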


For later reference, we make explicit the transfer of an amplification of an $L^\infty$ algebra to an algebra of 
diagonalisable operators.

\begin{remark}\label{r:ampdiag}
With notation as in Remark~\ref{r:id}, consider the Abelian $W^*$-algebra $L^\infty(X,\mu)$ and 
its $\cG$-amplification $I_{\cG}\otimes L^\infty(X,\mu)$, where $I_\cG$ denotes 
the unit operator on the Hilbert space $\cG$. Because the Hilbert 
space $\cG$ is supposed to be separable, without loss of generality we can assume that $\cG=\ell^2_n$, with 
$n\in\NN\cup\{\infty\}$. Then, the 
elements of $I_n\otimes L^\infty(X,\mu)$ are matrices $I_n\otimes \Phi$, that is, diagonal $n\times n$-matrices 
with all entries on the diagonal a function $\Phi\in L^\infty(X,\mu)$. There is a natural action of the $W^*$-algebra 
$I_n\otimes L^\infty(X,\mu)$ on the Hilbert space $\ell^2_n \otimes L^2(X,\mu)$: on elementary tensors 
$\xi\otimes f$, with $\xi\in \ell^2_n$ and $f\in L^2(X,\mu)$, its action is $(I_n\otimes \Phi)(\xi\otimes f)=
\xi \otimes \Phi f$. In this way, we view the $W^*$-algebra $I_n\otimes L^\infty(X,\mu)$ as a von Neumann 
algebra living in $\cB(\ell^2_n\otimes L^2(X,\mu))$.

As in Remark~\ref{r:id}, we consider the trivial bundle $X\ast \bG=X\times \ell^2_n$, where the bundle 
$\bG=\{\cG_x\}_{x\in X}$ with $\cG_x$ a copy of $\ell^2_n$ for all $x\in X$. Then, we can write simply 
$L^2(X\times \ell^2_n,\mu)=L^2(X\ast \bG,\mu)$. Let 
\begin{equation*}U\colon \ell^2_n\otimes L^2(X,\mu)\ra L^2(X\times \ell^2_n,\mu)
=\int_X^\oplus \ell^2_n\de\mu(x)\end{equation*}
denote the unitary 
operator defined on the orthonormal basis $\{h_m\otimes \phi_k\mid 1\leq m\leq n,\; k\in\NN\}$ by
\begin{equation}\label{e:uhamo}
U(h_m\otimes \phi_k)=\phi_k h_m\in L^2(X\times\ell^2_n,\mu)
\end{equation} and then extended on Fourier series. 
This unitary operator $U$ implements the spatial $*$-isomorphism of the Abelian von Neumann algebra 
$I_n\otimes L^\infty(X,\mu)$ with $\cB_{\mathrm{diag}}(L^2(X\times\ell^2_n,\mu))$, the Abelian 
von Neumann algebra of all diagonalisable operators on $L^2(X\times \ell^2_n,\mu)$. To see this, let 
$\Psi\in L^\infty(X,\mu)$ and $1\leq m \leq n,\; k \in\NN$ be arbitrary, and note that
\begin{equation}\label{e:tui} 
T(\phi_k h_m)=(U (I_n\otimes \Psi)U^*)(\phi_k h_m)= U(I_n\otimes \Psi)(h_m\otimes \phi_k)=U(h_m \otimes \Psi \phi_k,)
\end{equation}
hence, in view of the definition of $U$ as in \eqref{e:uhamo}, it follows that each fibre $\cG_x=\ell^2_n$ is left 
invariant under $T$ hence, denoting the compression of $T$ at $\cG_x$ 
by $T_x\colon \cG_x\ra \cG_x$, for $x\in X$, it follows that 
$(Tf)(x)=T_x f(x)$ for arbitrary $f\in L^2(X\times \ell^2_n,\mu)$ and $x\in X$, 
which proves that $T$ is decomposable. Finally, from \eqref{e:tui} it 
follows that, for arbitrary $x\in X$, the action of $T_x$ is just the multiplication with $\Psi(x)$, hence $T$ 
is diagonalisable.
\end{remark}

\subsection{Reduction of Abelian von Neumann Algebras}\label{ss:ravna}
A very important fact in our enterprise is the following result, see
Corollary III.1.5.18 in \cite{Blackadar}. The additional fact corresponding to the case when the Hilbert space 
$\cH$ is separable can be seen in Subsection 4.4.1 in \cite{BratelliRobinson}.

\begin{theorem}\label{t:abvn}
Let $\mathcal{Z}$ 
be an Abelian von Neumann algebra on the Hilbert space $\mathcal{H}$. Then, there exists a 
unique decomposition
\begin{equation*}
\mathcal{H}=\mathcal{H}_\infty\oplus\bigoplus_{n\geq 1} \mathcal{H}_n,
\end{equation*}
such that, for each $n\in \mathbb{N}\cup \{\infty\}$, $\mathcal{H}_n$ is invariant under $\mathcal{Z}$ and there 
exists a locally finite measure space $(X_n,\Omega_n,\mu_n)$ such that the 
compression of $\mathcal{Z}$ to $\mathcal{H}_n$, denoted by  $\mathcal{Z}_{\mathcal{H}_n}:=
P_{\cH_n}\mathcal{Z}|_{\cH_n}$, when viewed as a von Neumann algebra in $\cB(\cH_n)$,
is spatially isomorphic to the $n$-fold amplification 
$I_n\otimes L^\infty(X_n,\mu_n)$ acting on $\ell^2_n\otimes L^2(X_n,\mu_n)$, that is, there exists a unitary 
operator 
\begin{equation*}U_n\colon \cH_n\ra \ell^2_n\otimes L^2(X_n,\mu_n),\end{equation*} 
such that the map
\begin{equation*}
\mathcal{Z}_{\mathcal{H}_n}\ni T\mapsto  U_n TU_n^*\in I_n\otimes L^\infty(X_n,\mu_n)
\end{equation*}
is an isomorphism of von Neumann algebras. If the Hilbert space $\cH$ is separable, then the measure spaces
$(X_n,\Omega_n,\mu_n)$, for all $n\in\NN\cup\{\infty\}$, can be found standard. 
\end{theorem}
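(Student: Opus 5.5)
This is the classical multiplicity theorem for Abelian von Neumann algebras. I would prove it by passing to the commutant $\mathcal{Z}'$ and using its type I structure.

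\emph{Centre of the commutant.} Since $\mathcal{Z}$ is Abelian, $\mathcal{Z}\subseteq\mathcal{Z}'$, and by the double commutant theorem $\mathcal{Z}=\mathcal{Z}''$. Hence the centre of $\mathcal{Z}'$ is $\mathcal{Z}'\cap\mathcal{Z}''=\mathcal{Z}$. Moreover $\mathcal{Z}'$ has an Abelian commutant, so it is a type I von Neumann algebra.

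\emph{Homogeneous decomposition.} By the structure theory of type I algebras, there is a unique family $(E_n)_{n\in\NN\cup\{\infty\}}$ of mutually orthogonal central projections of $\mathcal{Z}'$ with sum $I$, such that $\mathcal{Z}'E_n$ is homogeneous of type $I_n$. One builds these from a maximal family of equivalent Abelian projections with central support $1$ on the appropriate central summand. The $E_n$ lie in $\mathcal{Z}$, so $\cH_n:=E_n\cH$ is invariant under $\mathcal{Z}$ and the compression $\mathcal{Z}_{\cH_n}$ equals $\mathcal{Z}E_n$. Homogeneity gives $n$ mutually orthogonal, equivalent Abelian projections $Q_1,Q_2,\dots$ in $\mathcal{Z}'E_n$ with sum $E_n$ and central support $E_n$. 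Fix one of them, $Q:=Q_1$. Let $\cK_n:=Q\cH$ and $\cA_n:=\mathcal{Z}Q$. Using the partial isometries implementing $Q_j\sim Q$, one obtains a unitary $\cH_n\to\ell^2_n\otimes\cK_n$. It carries $\mathcal{Z}'E_n$ onto $\cB(\ell^2_n)\bar\otimes\cA_n'$ and $\mathcal{Z}E_n$ onto $I_n\otimes\cA_n$. Since $Q$ is Abelian with central support $E_n$, the map $\mathcal{Z}E_n\to\cA_n$ is injective. Also $\cA_n'=Q\mathcal{Z}'Q$ is Abelian, so $\cA_n$ is maximal Abelian on $\cK_n$.

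\emph{Functional model for the maximal Abelian piece.} Take a maximal family of nonzero vectors $\xi_i\in\cK_n$ whose cyclic subspaces $[\cA_n\xi_i]$ are mutually orthogonal; Zorn's lemma provides it. On each cyclic subspace, Gelfand theory plus the Riesz representation theorem identify the action with $L^\infty(Y_i,\nu_i)$ on $L^2(Y_i,\nu_i)$, for a finite measure $\nu_i$ on a compact space $Y_i$. The cyclic projections lie in $\cA_n'=\cA_n$, so they are compatible with $\cA_n$. Let $(X_n,\Omega_n,\mu_n)$ be the disjoint union of the $(Y_i,\nu_i)$. Then $\cK_n\cong L^2(X_n,\mu_n)$ and $\cA_n\cong L^\infty(X_n,\mu_n)$, and this measure space is locally finite. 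Composing with the unitary from the previous step gives $U_n$.

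\emph{Uniqueness.} The projections $E_n$ are determined intrinsically: $E_n$ is the largest central projection of $\mathcal{Z}'$ on which $\mathcal{Z}'$ is homogeneous of type $I_n$. This fixes the decomposition of $\cH$.

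\emph{Separable case.} If $\cH$ is separable, the family $\{\xi_i\}$ is countable. Each $\cA_n\xi_i$-piece can be realised on the spectrum of a separable, norm-dense-in-the-weak-sense $C^*$-subalgebra, which is a compact metrisable space. So each $Y_i$ is standard, and a countable disjoint union of standard Borel spaces is standard.

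The main obstacle is the homogeneous decomposition of the type I algebra $\mathcal{Z}'$, specifically the comparison theory for Abelian projections that produces the $E_n$. If I wanted to avoid that machinery, I would instead induct directly on multiplicity. Choose a separating vector, or a maximal family of them, for $\mathcal{Z}$ and peel off the multiplicity-one part. Then repeat on the complement, using the cyclic projections in $\mathcal{Z}'$, and show that the resulting "multiplicity $\geq k$" central projections decrease in $k$.
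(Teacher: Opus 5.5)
Your existence construction is correct when $\cH$ is separable: the type I commutant $\mathcal{Z}'$ with centre $\mathcal{Z}$, the homogeneous central projections $E_n\in\mathcal{Z}$, the unitary $E_n\cH\to\ell^2_n\otimes Q\cH$, maximal abelianness of $\cA_n$ via $\cA_n'=Q\mathcal{Z}'Q$, the cyclic decomposition with Gelfand--Riesz, the disjoint-union measure, and standardness via a norm-separable weakly dense $C^*$-subalgebra. This is the standard argument behind the references the paper cites; the paper itself gives no proof.

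There is a genuine gap in your uniqueness step. You characterise $E_n$ as the largest central projection of $\mathcal{Z}'$ on which $\mathcal{Z}'$ is homogeneous of type $I_n$. A competing decomposition, however, is only required to consist of $\mathcal{Z}$-invariant subspaces, i.e.\ $P_{\cH_n}\in\mathcal{Z}'$, and you never show $P_{\cH_n}\in\mathcal{Z}$. This cannot be shown. For $\mathcal{Z}=\CC I$ on $\CC^3$, both $\cH_3=\CC^3$ and the pair $\cH_1=\CC e_1$, $\cH_2=\{e_1\}^\perp$ (with one-point measure spaces) meet every requirement of the statement. So uniqueness holds only among decompositions with $P_{\cH_n}\in\mathcal{Z}$, and you should state it that way. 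Under that hypothesis your argument closes. The commutant in $\cB(\cH_n)$ satisfies $(\mathcal{Z}P_{\cH_n})'=\mathcal{Z}'P_{\cH_n}$, which is spatially $\cB(\ell^2_n)\bar\otimes L^\infty(X_n,\mu_n)$ because $L^\infty(X_n,\mu_n)$ is maximal Abelian on $L^2(X_n,\mu_n)$. It is therefore homogeneous of type $I_n$, so $P_{\cH_n}\le E_n$ by the maximality you invoke. Equality follows because both families sum to $I$.

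The second problem is the index set. The homogeneous decomposition of a type I algebra runs over all cardinals, not over $\NN\cup\{\infty\}$ with $\ell^2_\infty$ separable, which is how the paper uses $\ell^2_\infty$. Take $\cH$ non-separable and $\mathcal{Z}=\CC I$. Then $\mathcal{Z}'=\cB(\cH)$ is homogeneous of uncountable type, and your family $(E_n)_{n\in\NN\cup\{\infty\}}$ with $\mathcal{Z}'E_n$ homogeneous of type $I_n$ does not exist. In fact no decomposition of the stated form exists. Each $I_n\otimes L^\infty(X_n,\mu_n)$ would have to consist of scalars, which forces $\dim L^2(X_n,\mu_n)\le 1$, so each $\cH_n$ is separable and hence so is $\cH$. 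Your proof, like the statement, is valid for separable $\cH$, or more generally when all multiplicities are countable. Otherwise you must let $n$ range over all nonzero cardinals.
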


In case the Hilbert space $\cH$ is separable, one can transfer the previous theorem into a representation of the 
von Neumann algebra $\cZ$ to a certain Abelian von Neumann algebra of all diagonalisable operators 
with respect to a certain direct integral representation of 
$\cH$. 

\begin{theorem}\label{t:did}
Let $\mathcal{Z}$ be an Abelian von Neumann algebra on the separable Hilbert space $\mathcal{H}$. Then,
there exists a standard Borel bundle $(X\ast \bH,\pi)$, a measure $\mu$ on $X$, and a unitary 
operator $U\colon \cH\ra L^2(X\ast \bH,\mu)$ such that, with notation as in \eqref{e:ledoi}, we have the spatial 
isomorphism of von Neumann algebras
\begin{equation*}
U\cZ U^* =\cB_{\mathrm{diag}}(L^2(X\ast\bH,\mu)).
\end{equation*}
\end{theorem}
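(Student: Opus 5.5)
We will deduce the statement from Theorem~\ref{t:abvn} together with Remark~\ref{r:ampdiag}. Since $\cH$ is separable, Theorem~\ref{t:abvn} gives the decomposition $\cH=\bigoplus_{n\in\NN\cup\{\infty\}}\cH_n$ into $\cZ$-invariant subspaces, standard measure spaces $(X_n,\Omega_n,\mu_n)$, and unitaries $U_n\colon\cH_n\ra\ell^2_n\otimes L^2(X_n,\mu_n)$ with $U_n\cZ_{\cH_n}U_n^*=I_n\otimes L^\infty(X_n,\mu_n)$. Because each $\cH_n$ is reducing for the Abelian algebra $\cZ$, the projections $P_{\cH_n}$ lie in $\cZ''=\cZ$. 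Hence $\cZ=\bigoplus_n\cZ_{\cH_n}$, an $\ell^\infty$-direct sum. The countably many $\cH_n$ that are zero are simply discarded.

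By Remark~\ref{r:ampdiag}, for each $n$ there is a unitary $V_n\colon\ell^2_n\otimes L^2(X_n,\mu_n)\ra L^2(X_n\times\ell^2_n,\mu_n)$ carrying $I_n\otimes L^\infty(X_n,\mu_n)$ onto $\cB_{\mathrm{diag}}(L^2(X_n\times\ell^2_n,\mu_n))$. Next we glue the pieces. Let $X=\bigsqcup_n X_n$ with the disjoint union $\sigma$-algebra; a countable disjoint union of standard Borel spaces is standard. Define the measure $\mu$ by $\mu(A)=\sum_n\mu_n(A\cap X_n)$. Take the fibres $\cH_x=\ell^2_n$ for $x\in X_n$. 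A fundamental sequence is obtained by extending each fundamental sequence from Remark~\ref{r:id} on $X_n$ by zero outside $X_n$ and enumerating the countable union. Conditions 2.(b) and 2.(c) then hold fibrewise, so Proposition~\ref{p:bm} yields a standard Borel bundle $(X\ast\bH,\pi)$.

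The natural map $W\colon\bigoplus_n L^2(X_n\times\ell^2_n,\mu_n)\ra L^2(X\ast\bH,\mu)$ sends $(f_n)_n$ to the section equal to $f_n$ on $X_n$. It is unitary, since sections restrict to Borel sections on each $X_n$ and the norms add. Under $W$, a family of diagonalisable operators given by $\Phi_n\in L^\infty(X_n,\mu_n)$ with $\sup_n\|\Phi_n\|_\infty<\infty$ corresponds to the diagonalisable operator given by $\Phi=\sum_n\Phi_n\chi_{X_n}\in L^\infty(X,\mu)$. Conversely, every $\Phi\in L^\infty(X,\mu)$ restricts to such a uniformly bounded family. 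Therefore
\[
W\Bigl(\bigoplus_n\cB_{\mathrm{diag}}(L^2(X_n\times\ell^2_n,\mu_n))\Bigr)W^*=\cB_{\mathrm{diag}}(L^2(X\ast\bH,\mu)).
\]
Setting $U=W\bigl(\bigoplus_n V_nU_n\bigr)$ gives $U\cZ U^*=\cB_{\mathrm{diag}}(L^2(X\ast\bH,\mu))$.

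The main obstacle is the Borel bookkeeping on the disjoint union. We must check that the glued bundle is standard with the prescribed fundamental sequence, and that Borel sections and $L^\infty$ functions on $X$ are exactly the piecewise ones. Both facts follow because each $X_n$ is a Borel subset of $X$ and the index set is countable.
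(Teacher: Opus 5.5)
Your route is the same as the paper's. You apply Theorem~\ref{t:abvn}, convert each multiplicity piece into a direct integral with constant fibre $\ell^2_n$ via Remark~\ref{r:id} and Remark~\ref{r:ampdiag}, and glue over $X=\bigsqcup_n X_n$. Your bookkeeping (zero-extended fundamental sequences, Proposition~\ref{p:bm}, the unitary $W$, gluing the $\Phi_n$ into $\Phi\in L^\infty(X,\mu)$) fills in what the paper's sketch only asserts, and that part is fine.

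The gap is the step you treat as automatic: ``each $\cH_n$ is reducing for $\cZ$, hence $P_{\cH_n}\in\cZ''=\cZ$.'' Reducing only gives $P_{\cH_n}\in\cZ'$. For abelian $\cZ$ one has $\cZ=\cZ''\subseteq\cZ'$, with equality only when $\cZ$ is maximal abelian. The step is load-bearing. Without $P_{\cH_n}\in\cZ$ you only know $\cZ\subseteq\bigoplus_n\cZ_{\cH_n}$, hence only $U\cZ U^*\subseteq\cB_{\mathrm{diag}}(L^2(X\ast\bH,\mu))$. Concretely, take $\cZ=\CC I$ on $\CC^3$, $\cH_1=\CC e_1$ and $\cH_2=\lin\{e_2,e_3\}$. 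Both pieces are reducing, and their compressions are exactly $I_1\otimes L^\infty$ and $I_2\otimes L^\infty$ of a one-point space. Yet your gluing produces the algebra $\{aP_{\cH_1}+bP_{\cH_2}\mid a,b\in\CC\}$, which is strictly larger than $\cZ$. The same example shows that the uniqueness clause of Theorem~\ref{t:abvn}, as worded in the paper (invariance only), is not correct, because $\cH_3=\CC^3$ also qualifies. So you cannot extract centrality from that clause either.

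What you need is the actual content of the multiplicity theorem. The $\cH_n$ are the ranges of the central projections of the type I algebra $\cZ'$ that cut it into its homogeneous type $I_n$ parts. The centre of $\cZ'$ is $\cZ'\cap\cZ''=\cZ$, since $\cZ$ is abelian. Equivalently, the standard statement says that $\cZ$ itself, and not merely each compression, is spatially isomorphic to $\bigoplus_n I_n\otimes L^\infty(X_n,\mu_n)$. Once $P_{\cH_n}\in\cZ$ is in hand, $\cZ=\bigoplus_n \cZ P_{\cH_n}$ is the full $\ell^\infty$-direct sum: a strongly convergent sum $\sum_n Z_nP_{\cH_n}$ with $Z_n\in\cZ$ uniformly bounded stays in $\cZ$. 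With that, the rest of your argument proves the theorem. The paper's own justification is the same gluing sketch and passes over this point without comment.
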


This is because, in case $\cH$ is separable, hence $X_n$ obtained in Theorem \ref{t:abvn} are standard spaces and, consequently,
$L^2(X_n,\mu_n)$ are separable, 
for each $n\in \NN\cup\{\infty\}$, see Remark~\ref{r:id}, the Hilbert space $\ell^2_n\otimes L^2(X_n,\mu_n)$ is 
unitarily equivalent with the direct integral Hilbert space $\int_{X_n}^\oplus \cH_x\de\mu_n(x)
=L^2(X_n\ast \bH_n,\mu_n)$, where $\bH_n=\{\cH_x\}_{x\in X_n}$ with $\cH_x=\ell^2_n$ for all $x\in X_n$.
Then, letting $X=X_\infty\sqcup\bigsqcup_{n\geq 1} X_n$, we can organise it in a natural fashion as a standard 
Borel space and then, letting $\bH=\bH_\infty\sqcup\bigsqcup_{n\geq 1}\bH_n$, it follows 
that the von Neumann algebra $\cZ$ is spatially isomorphic with the von Neumann algebra 
$\cB_{\mathrm{diag}}(L^2(X\ast\bH,\mu))$
of all diagonalisable 
operators with respect to the standard Borel bundle $(X\ast\bH,\pi)$.

\section{Direct Integrals of Locally Hilbert Spaces}\label{s:dilhs}
In this section, we define the notion of direct integral of locally Hilbert spaces and present the 
reduction of Abelian locally von Neumann algebra. We use notations and terminology from the previous sections.

\subsection{Borel Bundles of Locally Hilbert Spaces}\label{ss:bblhs}
For the beginning, we set the stage for the notion of a direct integral of locally Hilbert spaces and explain what is its purpose and 
what we want to get.
Assume that, for a given directed set $\Lambda$, we have a strictly inductive system of sets
$(X_\lambda)_{\lambda\in\Lambda}$ in the sense that, whenever $\lambda,\nu\in \Lambda$ are such that $\lambda\leq \nu$,
we have $X_\lambda\subseteq X_\nu$. Let 
$X=\varinjlim\limits_{\lambda\in\Lambda} X_\lambda=\bigcup\limits_{\lambda\in\Lambda} X_\lambda$ be its inductive 
limit. Also, assume that, for each $x\in X$, 
there is given a net $(\cH_{x,\lambda})_{\lambda\in \Lambda}$, which is a 
strictly inductive system of Hilbert spaces, and let $\cH_x=\varinjlim\limits_{\lambda\in\Lambda} \cH_{x,\lambda}$ be 
the associated locally Hilbert space. We consider the bundle of locally Hilbert spaces 
$\bH=\{\cH_x\}_{x\in X}$. 

For each $\lambda \in \Lambda$, we consider 
$\bH_\lambda=\{\cH_{x,\lambda}\}_{x\in X_\lambda}$ and the bundle of Hilbert spaces 
$(X_\lambda\ast \bH_\lambda,\pi_\lambda)$, see 
Subsection~\ref{ss:bbhs}, and clearly, whenever $\lambda,\nu\in \Lambda$ are such that $\lambda\leq \nu$ we 
have $X_\lambda\ast\bH_\lambda\subseteq X_\nu\ast \bH_\nu$. This shows that
$(X_\lambda\ast\bH_\lambda)_{\lambda\in\Lambda}$ is a strictly inductive system of bundles of Hilbert spaces. 
Then, with notation as before, we denote
\begin{equation*}
\varinjlim_{\lambda\in\Lambda} (X_\lambda\ast \bH_\lambda)=
X\ast\bH=\bigcup_{\lambda\in\Lambda} (X_\lambda \ast \bH_\lambda),
\end{equation*}
and call it the \emph{inductive limit} of the net of bundles of Hilbert spaces 
 $(X_\lambda\ast\bH_\lambda)_{\lambda\in \Lambda}$.
Note that the bundle  $X\ast \bH$ has the canonical projection $\pi\colon X\ast\bH\ra X$ defined in the usual 
fashion: $\pi(x,h)=x,$ for all $(x,h)\in X\ast \bH$. 
We refer to the pair $(X\ast\bH,\pi)$ as a \emph{bundle of locally Hilbert spaces}.

As usually, a \emph{section}, or a \emph{vector field}, of the bundle $(X\ast \bH,\pi)$ is a map 
$f\colon X\ra X\ast \bH$ which is a right inverse of $\pi$, that is $\pi\circ f=\mathrm{Id}_X$, meaning that, 
for each $x\in X$ there exists 
$\lambda\in\Lambda$ such that $x\in X_\lambda$ and $f(x)=(x,h)$ with $h\in\cH_{x,\lambda}$. Because of 
this, we call $f$ a \emph{coherent section} of $X\ast \bH$.
Let $\cF_{\bH}(X)$ denote the collection of all coherent sections $f$ of the bundle
of locally Hilbert spaces $(X\ast \bH,\pi)$. Clearly, $\cF_{\bH}(X)$ is a vector space.

In addition to this setting, let us now assume that $(X_\lambda,\Sigma_\lambda,\mu_\lambda)_{\lambda\in\Lambda}$ 
is a strictly inductive net of Borel measure spaces and let 
$(X,\Sigma,\mu)=\varinjlim\limits_{\lambda\in\Lambda} (X_\lambda,
\Sigma_\lambda,\mu_\lambda)$ be its inductive limit. Also, assume that, for each $x\in X$, 
there is given a net $(\cH_{x,\lambda})_{\lambda\in \Lambda}$, which  is a 
strictly inductive system of Hilbert spaces, and let $\cH_x=\varinjlim\limits_{\lambda\in\Lambda} \cH_{x,\lambda}$ be 
the associated locally Hilbert space. Moreover, assume that
we have a net of Borel bundles of Hilbert spaces $(X_\lambda\ast\bH_\lambda,\Theta_\lambda,
\pi_\lambda)$, in the sense of Definition~\ref{d:bb}, 
which is a strictly inductive system of Borel spaces, and let $(X\ast \bH,\Theta,
\pi)=\varinjlim\limits_{\lambda\in\Lambda} (X_\lambda\ast \bH_\lambda,\Theta_\lambda,\pi_\lambda)$, in the sense 
of \eqref{e:lms}. To be more precise, we have
\begin{equation}\label{e:lmsa1}
X\ast\bH=\bigcup_{\lambda\in\Lambda} X_\lambda\ast\bH_\lambda,\quad \Theta=\bigcup_{\lambda\in\Lambda}
\Theta_\lambda.
\end{equation}

With notation as in Proposition~\ref{p:di}, for each $\lambda\in\Lambda$, consider the vector space
\begin{align}\label{e:celt}
\cL^2_\lambda(X\ast\bH,\mu) & :=\big\{ f\in\cF_\bH(X)\mid \supp(f)\subseteq X_\lambda,\ f|_{X_\lambda}\in 
B(X_\lambda\ast \bH_\lambda,\Theta_\lambda),\\ 
& \phantom{\supp(f)\subseteq X_\lambda\ \ \ \ \ } \int_{X_\lambda}\|f(x)\|^2_{\cH_{x,\lambda}}\de \mu_\lambda(x)<\infty\big\},\nonumber
\end{align}
on which we consider the pre-inner product
\begin{equation}\label{e:lafag}
\langle f,g\rangle_{\cL^2_\lambda(X\ast\bH,\mu)}:=\int_{X_\lambda} \langle f(x),g(x)\rangle_{\cH_{x,\lambda}} 
\de\mu_\lambda(x),\quad f,g\in \cL^2_\lambda(X\ast\bH,\mu).
\end{equation}
Letting $\sim$ denote the equivalence relation on $\cL^2_\lambda(X\ast\bH,\mu)$ 
defined by identification modulo subsets of $X_\lambda$ of $\mu_\lambda$-measure zero, define
\begin{equation}
L^2_\lambda(X\ast\bH,\mu):=\cL^2_\lambda(X\ast\bH,\mu)/\sim, 
\end{equation}
with inner product defined as in \eqref{e:lafag}, and note that it is a Hilbert space. The Hilbert spaces
$L^2_\lambda(X\ast\bH,\mu)$ make the prototypes of direct integral Hilbert spaces that we need in order to 
define the concept of direct integral of locally Hilbert spaces because, for all $\lambda$ in $\Lambda$, their 
elements are functions (actually, equivalence classes of functions) defined on the whole set $X$. Although they 
are not quite the type of a direct integral Hilbert space that stems from Definition~\ref{d:di}, they are canonically 
isomorphic with them. More precisely, consider the direct integral Hilbert space 
$L^2(X_\lambda\ast\bH_\lambda,\mu_\lambda)$ defined as in Definition~\ref{d:di}, with $\lambda\in\Lambda$ 
fixed. Then, the map $\cL^2_\lambda(X\ast\bH,\mu)\ni f\mapsto f|_{X_\lambda}\in 
\cL^2(X_\lambda\ast\bH_\lambda,\mu_\lambda)$ is uniquely lifted to an isomorphism of Hilbert spaces 
$L^2_\lambda(X\ast \bH,\mu)\ra L^2(X_\lambda\ast\bH_\lambda,\mu_\lambda)$.

After all these considerations, we can now see that the crucial point is to impose the condition that the net of Hilbert spaces
$(L^2_\lambda(X\ast\bH,\mu))_{\lambda\in\Lambda}$ is strictly inductive. So, what we have to do is to define what a Borel 
bundle of locally Hilbert spaces is and then what is the corresponding direct integral of locally Hilbert spaces.
 
\begin{definition}\label{d:bblhs} 
Let $\big ( \Lambda, \leq \big)$ be a directed set and let $\bigl(X_{\lambda}, \Sigma_{\lambda}, 
\mu_{\lambda} \bigr)_{\lambda \in \Lambda}$ be a strictly inductive system of Borel measure spaces with 
$(X, \Sigma, \mu) := \varinjlim\limits_{\lambda \in \Lambda} (X_{\lambda}, \Sigma_{\lambda}, \mu_{\lambda})$ its inductive limit. 
For each $x \in X$, let $\big \{ \mathcal{H}_{x, \lambda} \big \}_{\lambda \in \Lambda}$ be a strictly inductive system of separable 
Hilbert 
spaces and $\mathcal{H}_x := \varinjlim\limits_{\lambda \in \Lambda} \mathcal{H}_{x, \lambda}$ be the associated locally Hilbert 
space. For each $\lambda\in\Lambda$, denote $\bH_\lambda:=\{\cH_{x,\lambda}\}_{x\in X_\lambda}$. We call 
\begin{equation*}\bigl( X \ast \mathbf{H}, \pi \bigr) = \varinjlim\limits_{\lambda \in \Lambda}
 \bigl( X_\lambda \ast \mathbf{H}_\lambda, \pi_\lambda \bigr)\end{equation*} 
a \emph{Borel bundle of locally Hilbert spaces}, if the following conditions hold.
\begin{enumerate}
\item[(i)] There exists a net of Borel bundles of Hilbert spaces 
$\big ( X_\lambda \ast \mathbf{H}_\lambda, \Theta_\lambda, \pi_\lambda \big)_{\lambda \in \Lambda}$, which is a strictly 
inductive system of Borel spaces, and let, see \eqref{e:lmsa1}, 
\begin{equation*}\bigl( X \ast \mathbf{H}, \Theta, \pi \bigr) = \varinjlim\limits_{\lambda \in \Lambda} 
\bigl( X_\lambda \ast \mathbf{H}_\lambda, \Theta_\lambda, \pi_\lambda \bigr).\end{equation*}
\item[(ii)] $\Sigma_\lambda = \big \{ E \subseteq X_\lambda \mid \pi^{-1}_\lambda(E) \in \Theta_\lambda \big \}$, for all 
$\lambda \in \Lambda$.
\item[(iii)] The net $\big ( L^2_\lambda \big (X \ast  \mathbf{H}, \mu \big ) \big)_{\lambda \in \Lambda}$, see \eqref{e:celt}, which exists as a consequence of (i), forms a strictly inductive system of Hilbert spaces.
\end{enumerate}
The locally Hilbert space defined by
\begin{equation}\label{e:leloc}
L^2_{\mathrm{loc}}(X\ast\bH,\mu)=\varinjlim_{\lambda\in \Lambda} L^2_\lambda(X\ast \bH,\mu),
\end{equation}
which exists in view of assumption (iii), see \eqref{e:lmsa1} through \eqref{e:lafag},
is called the \emph{direct integral of locally Hilbert spaces} of the bundle $(X\ast \bH,\pi)$, where 
$(X,\Sigma,\mu):=\varinjlim_{\lambda\in\Lambda}(X_\lambda,\Sigma_\lambda,\mu_\lambda)$ and  
$\bH:=\{\cH_x\}_{x\in X}$. Also, we can use the more involved notation
\begin{equation}\label{e:intexo}
\int_X^{\oplus,\mathrm{loc}} \cH_x\de \mu(x):=L^2_{\mathrm{loc}}(X\ast \bH,\mu).
\end{equation}

If, in addition, for all $\lambda\in\Lambda$, the Borel space $(X_\lambda,\bH_\lambda,\Theta_\lambda)$ 
is standard (analytic Borel) then the Borel bundle of locally Hilbert spaces $(X\ast \bH,\pi)$ is called 
\emph{standard} (\emph{analytic Borel}).
 \end{definition}

Let us observe that, under the assumptions of Definition~\ref{d:bblhs}, the elements of the direct integral
of locally Hilbert spaces $L^2_{\mathrm{loc}}(X\ast \bH,\mu)$ are coherent sections, identified $\mu$-a.e.

\begin{remark}\label{r:dep}
One may note that the direct integral of locally Hilbert spaces of the bundle $\big ( X \ast \mathbf{H}, \pi \big)$, denoted by 
$L^2_{\mathrm{loc}}(X \ast \mathbf{H}, \mu)$, depends on the choice of the bundle of topologies 
$\{ \Theta_\lambda\}_{\lambda \in \Lambda}$. 
However, we will not highlight this fact in the notation of the direct integral of locally Hilbert spaces of the bundle 
$\big ( X \ast \mathbf{H}, \pi \big)$. We will mention it only when it becomes necessary.
\end{remark}

\begin{lemma}\label{l:sigae} With notation and assumptions as in Definition~\ref{d:bblhs}, we have
\begin{equation*}\Sigma=\{E\subseteq X\mid \pi^{-1}(E)\in \Theta\}.\end{equation*}
\end{lemma}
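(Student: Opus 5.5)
We prove the two inclusions by reducing everything to the level $\lambda$ through the identities $\Sigma=\bigcup_{\lambda}\Sigma_\lambda$ and $\Theta=\bigcup_\lambda\Theta_\lambda$ from \eqref{e:lms} and \eqref{e:lmsa1}, together with condition (ii) of Definition~\ref{d:bblhs}. The basic observation, used repeatedly, is that for $E\subseteq X_\lambda$ we have $\pi^{-1}(E)=\pi_\lambda^{-1}(E)$ as subsets of $X\ast\bH$. Indeed, a point $(x,h)$ of $X\ast\bH$ with $x\in E\subseteq X_\lambda$ has $h\in\cH_x$. To see that it lies in $X_\lambda\ast\bH_\lambda$ we would need $h\in\cH_{x,\lambda}$, and this is not automatic.

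\emph{Inclusion $\Sigma\subseteq\{E\mid\pi^{-1}(E)\in\Theta\}$.} Let $E\in\Sigma$, so $E\in\Sigma_\lambda$ for some $\lambda$. By (ii), $\pi_\lambda^{-1}(E)\in\Theta_\lambda\subseteq\Theta$. The issue is to identify $\pi^{-1}(E)$ with $\pi_\lambda^{-1}(E)$, or at least to show that $\pi^{-1}(E)\in\Theta$. Because $\Theta$ is only the union of the $\Theta_\nu$, we must produce a single $\nu$ with $\pi^{-1}(E)\in\Theta_\nu$. So the main obstacle is the following: the full fibre over $E$ contains vectors from $\cH_{x,\nu}$ for every $\nu$, and such a set may fail to lie in any single $X_\nu\ast\bH_\nu$.

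I would first handle this by interpreting $\pi^{-1}$ in the statement compatibly with the inductive limit, which is the reading under which (ii) is consistent with (sim2). Concretely, I would use strict inductivity of $(\Theta_\nu)$: for $\lambda\le\nu$ we have $\Theta_\lambda=\{A\cap(X_\lambda\ast\bH_\lambda)\mid A\in\Theta_\nu\}$. Combining this with (ii) at level $\nu$ and the fact that $\Sigma_\lambda$ is the trace of $\Sigma_\nu$ on $X_\lambda$, the sets $\pi_\nu^{-1}(E)$ for $\nu\ge\lambda$ form a coherent family. Their union is $\pi^{-1}(E)$, and its trace on each level is measurable. This places $\pi^{-1}(E)$ in the canonical extension $\widetilde\Theta$ of \eqref{e:tomega}, and inside $\Theta$ itself whenever the family stabilises. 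I expect the verification of membership in $\Theta$ to be the delicate point, and I would state it with respect to the traces $\pi^{-1}(E)\cap(X_\nu\ast\bH_\nu)=\pi_\nu^{-1}(E\cap X_\nu)$.

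\emph{Reverse inclusion.} Suppose $\pi^{-1}(E)\in\Theta$, so $\pi^{-1}(E)\in\Theta_\lambda$ for some $\lambda$. Then $\pi^{-1}(E)\subseteq X_\lambda\ast\bH_\lambda$, hence $E\subseteq X_\lambda$ and $\pi^{-1}(E)=\pi_\lambda^{-1}(E)$. By (ii), $E\in\Sigma_\lambda\subseteq\Sigma$. This direction is routine once the trace identity $\pi^{-1}(E)\cap(X_\nu\ast\bH_\nu)=\pi_\nu^{-1}(E\cap X_\nu)$ is recorded, which follows directly from the definitions of $\pi$ and $\pi_\nu$.
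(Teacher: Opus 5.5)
Your reverse inclusion is correct, and cleaner than the paper's. From $\pi^{-1}(E)\in\Theta_\lambda$ you get $\pi^{-1}(E)\subseteq X_\lambda\ast\bH_\lambda$. Hence $E\subseteq X_\lambda$ (every fibre contains $0$) and $\pi^{-1}(E)=\pi_\lambda^{-1}(E)$, so (ii) gives $E\in\Sigma_\lambda$. The paper instead uses ``$E\subseteq X$ implies $E\subseteq X_\mu$ for some $\mu$'', which is false for arbitrary $E$ and not needed. The forward inclusion, however, you do not prove. What you actually show is $\pi^{-1}(E)\in\widetilde\Theta$, and $\pi^{-1}(E)\in\Theta$ only when the fibres stabilise. ``Interpreting $\pi^{-1}$ compatibly with the inductive limit'' does not give membership in $\Theta=\bigcup_\nu\Theta_\nu$. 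Also delete the ``basic observation'' in your first sentence: for $E\subseteq X_\lambda$ one only has $\pi_\lambda^{-1}(E)\subseteq\pi^{-1}(E)$.

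This gap cannot be closed, because the forward inclusion is false in general. You have located exactly the step where the paper's own proof breaks. The paper writes ``since $E\subseteq X_\lambda$, we have $\pi^{-1}(E)=\pi_\lambda^{-1}(E)$'', which holds only if $\cH_x=\cH_{x,\lambda}$ for all $x\in E$. For a counterexample, take $\Lambda=\NN$, $X_\lambda=\{x_0\}$, $\Sigma_\lambda=\{\emptyset,\{x_0\}\}$, $\mu_\lambda(\{x_0\})=1$, $\cH_{x_0,\lambda}=\CC^\lambda$, and $\Theta_\lambda$ the Borel sets of $\{x_0\}\times\CC^\lambda$. All conditions of Definition~\ref{d:bblhs} hold and $\{x_0\}\in\Sigma$. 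But $\pi^{-1}(\{x_0\})=\{x_0\}\times\bigcup_\lambda\CC^\lambda$ lies in no $X_\lambda\ast\bH_\lambda$, so it is not in $\Theta$. Precisely, $\pi^{-1}(E)\in\Theta$ if and only if $E\in\Sigma$ and there is $\lambda$ with $E\subseteq X_\lambda$ and $\cH_x=\cH_{x,\lambda}$ for all $x\in E$; this is exactly your stabilisation caveat. Your ingredients do prove the correct replacement. Since $\pi^{-1}(E)\cap(X_\nu\ast\bH_\nu)=\pi_\nu^{-1}(E\cap X_\nu)$, condition (ii) gives $\widetilde\Sigma=\{E\subseteq X\mid \pi^{-1}(E)\in\widetilde\Theta\}$ directly, and this is all that item (1) of Theorem~\ref{t:dintcomp} uses. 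Alternatively, the lemma holds as stated when the fibres stabilise. They do in Theorem~\ref{t:avn2}, where $\cG_{x,\lambda}=\ell^2_n$ with $n$ independent of $\lambda$.
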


\begin{proof}
Indeed, if $E\in\Sigma$, then there exists $\lambda\in\Lambda$ such that $E\subseteq \Sigma_\lambda$ and then, by (ii),
we have $\pi_\lambda^{-1}(E)\in\Theta_\lambda$. Since $E\subseteq X_\lambda$, we have 
$\pi^{-1}(E)=\pi_\lambda^{-1}(E)\in \Theta_\lambda\subseteq \Theta$, hence $\pi^{-1}(E)\in \Theta$.

Conversely, let $E\subseteq X$ be such that $\pi^{-1}(E)\in\Theta=\bigcup_{\lambda\in\Lambda}\Theta_\lambda$, hence
there exists $\lambda\in\Lambda$ such that $\pi^{-1}(E)\in\Theta_\lambda$. But, $E\subseteq X=\bigcup_{\mu\in\Lambda} X_\mu$
implies that there exists $\mu\in\Lambda$ such that $E\subseteq X_\mu$. Since $\Lambda$ is directed, there exists 
$\nu\in\Lambda$ such that $\mu\leq\nu$ and $\lambda\leq\nu$, hence $E\subseteq X_\nu$ and 
$\pi^{-1}(E)\in\Theta_\lambda\subseteq \Theta_\nu$.
\end{proof}

After getting the concept of direct integral of locally Hilbert spaces, which is a locally Hilbert space, the first natural question to be 
answered is whether it is a representing (commutative) one.

\begin{proposition}\label{p:dintrep}
Consider the notation and assumptions as in Definition~\ref{d:bblhs}.

\nr{1} For each $\lambda \in\Lambda$, the linear operator 
\begin{equation}\label{e:pela}
L^2_{\mathrm{loc}}(X \ast \mathbf{H}, \mu)\ni 
f\mapsto  \chi_{X_\lambda}f\in L^2_\lambda(X \ast \mathbf{H}, \mu)
\end{equation} is the Hermitian projection $P_\lambda$ from 
$ L^2_{\mathrm{loc}}(X \ast \mathbf{H}, \mu)$ onto $L^2_\lambda(X \ast \mathbf{H}, \mu)$.

\nr{2} The locally Hilbert space $L^2_{\mathrm{loc}}(X \ast \mathbf{H}, \mu)$ is representing.
\end{proposition}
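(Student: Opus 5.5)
We first check that for $f\in L^2_{\mathrm{loc}}(X\ast\bH,\mu)$ the section $\chi_{X_\lambda}f$ lies in $L^2_\lambda(X\ast\bH,\mu)$, and that it is the Hermitian projection of $f$. Fix $\lambda$ and take $\epsilon\in\Lambda$ with $\lambda\leq\epsilon$ and $f\in L^2_\epsilon(X\ast\bH,\mu)$. Since $(L^2_\nu(X\ast\bH,\mu))_\nu$ is strictly inductive by (iii), everything can be computed inside the Hilbert space $L^2_\epsilon(X\ast\bH,\mu)$.

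To see that $\chi_{X_\lambda}f\in L^2_\lambda$, we argue in three parts. Its support lies in $X_\lambda$. Its norm satisfies $\int_{X_\lambda}\|f(x)\|^2\de\mu_\lambda(x)=\int_{X_\epsilon}\chi_{X_\lambda}\|f(x)\|^2\de\mu_\epsilon(x)<\infty$, which uses $X_\lambda\in\Sigma_\epsilon$ by (sim2) and the agreement of the measures (sim3). Its restriction to $X_\lambda$ is a Borel section of $X_\lambda\ast\bH_\lambda$.

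The last point is the delicate one. The fibres satisfy $\cH_{x,\lambda}\subseteq\cH_{x,\epsilon}$, yet $f(x)$ need not lie in $\cH_{x,\lambda}$ for $x\in X_\lambda$. Hence $\chi_{X_\lambda}f$ is a priori only a section with values in $\cH_{x,\epsilon}$, and pointwise restriction does not literally give an element of $L^2_\lambda$. I would avoid this by not claiming that $\chi_{X_\lambda}f$ is pointwise the projection. Instead I would use (iii) abstractly. Since $L^2_\lambda\subseteq L^2_\epsilon$ isometrically, it is a closed subspace, and the orthogonal projection $P_{\lambda,\epsilon}$ onto it exists. Every $g\in L^2_\lambda$ vanishes off $X_\lambda$, so $\langle f-\chi_{X_\lambda}f,g\rangle_{L^2_\epsilon}=\int_{X_\epsilon\setminus X_\lambda}\langle f,g\rangle\,\de\mu_\epsilon=0$. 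Thus $P_{\lambda,\epsilon}f=P_{\lambda,\epsilon}(\chi_{X_\lambda}f)$.

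It then remains to identify $\chi_{X_\lambda}f$ with an element of $L^2_\lambda$. I would read condition (iii), together with the paper's coherent-section convention that $f(x)\in\cH_{x,\lambda}$ for $x\in X_\lambda$ is built into the inclusions, as forcing the restriction of any element of $L^2_\epsilon$ to $X_\lambda$ to be a Borel section of $X_\lambda\ast\bH_\lambda$. Measurability is then inherited: $\Theta_\lambda=\{A\cap(X_\lambda\ast\bH_\lambda)\mid A\in\Theta_\epsilon\}$ by (sim2), so composing $f|_{X_\epsilon}$ with the inclusion gives a $\Theta_\lambda$-measurable map. This is the step I expect to be the main obstacle, and the plan is to settle it by this interpretation of the definition.

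Granting that $\chi_{X_\lambda}f\in L^2_\lambda$, we get $P_{\lambda,\epsilon}f=\chi_{X_\lambda}f$. This element does not depend on $\epsilon$, so it defines $P_\lambda$ on all of $L^2_{\mathrm{loc}}(X\ast\bH,\mu)$. The map is idempotent, has range $L^2_\lambda$, and is Hermitian, because $\langle\chi_{X_\lambda}f,g\rangle=\int\chi_{X_\lambda}\langle f,g\rangle=\langle f,\chi_{X_\lambda}g\rangle$. By the uniqueness of the Hermitian projection (Lemma~3.1 in \cite{Gheondea1}), this proves (1).

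For (2), the projections are now multiplication operators by indicator functions, which commute: $P_\lambda P_\nu f=\chi_{X_\lambda}\chi_{X_\nu}f=\chi_{X_\lambda\cap X_\nu}f=P_\nu P_\lambda f$. This is exactly condition (lch5), so $L^2_{\mathrm{loc}}(X\ast\bH,\mu)$ is representing. Equivalently, one can verify \eqref{e:pale} inside each $L^2_\epsilon$ and invoke Lemma~\ref{l:repof}.
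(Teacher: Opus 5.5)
Your route is the paper's route: show that multiplication by $\chi_{X_\lambda}$ is a Hermitian projection onto $L^2_\lambda(X\ast\bH,\mu)$, invoke uniqueness, and get (2) from commuting indicators. But the step you single out as the main obstacle is a genuine gap, and the reading of the definition you propose does not close it.

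Coherence of a section only requires $f(x)\in\cH_{x,\lambda}$ for \emph{some} $\lambda$ with $x\in X_\lambda$, i.e.\ $f(x)\in\cH_x$. Condition (iii) of Definition~\ref{d:bblhs} only asserts the isometric inclusions $L^2_\lambda(X\ast\bH,\mu)\subseteq L^2_\epsilon(X\ast\bH,\mu)$. That concerns sections of the small bundle viewed inside the big one. It says nothing about the values of elements of $L^2_\epsilon$ on $X_\lambda$.

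In fact the statement fails without further hypotheses. Take $\Lambda=\{\lambda,\epsilon\}$ with $\lambda\leq\epsilon$, $X_\lambda=X_\epsilon=\{x_0\}$ with the Dirac measure, and $\cH_{x_0,\lambda}=\CC\oplus 0\subsetneq\cH_{x_0,\epsilon}=\CC^2$ with their usual Borel structures. Conditions (i)--(iii) hold, with $L^2_\lambda\cong\CC\oplus 0$ and $L^2_\epsilon\cong\CC^2$. Yet $f\mapsto\chi_{X_\lambda}f$ is the identity on $L^2_\epsilon$, so it does not even map into $L^2_\lambda$. The actual $P_\lambda$ is the projection of the fibre $\CC^2$ onto $\CC\oplus 0$. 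Similarly, take $\Lambda=\{a,b,c\}$ with $a,b\leq c$, a single point $x_0$, and fibres $\CC(1,0)$, $\CC(1,1)$, $\CC^2$. The direct integral is then $\CC^2$ with the two non-commuting projections onto these lines, so (2) fails as well.

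The paper's own proof (``it is easy to see'') passes over exactly the point you isolated. So your proposal is no less complete than the paper's, but neither proves the statement as formulated. An extra hypothesis is needed. For instance, require that fibres stop growing once a point enters the base: $\cH_{x,\lambda}=\cH_{x,\epsilon}$ whenever $\lambda\leq\epsilon$ and $x\in X_\lambda$. A weaker version suffices: every $f\in L^2_\epsilon$ takes values in $\cH_{x,\lambda}$ for $\mu$-a.e.\ $x\in X_\lambda$. This holds for the bundles built in Theorem~\ref{t:avn2}, where $\cG_{x,\lambda}=\ell^2_n$ for the unique $n$ with $x\in X_{\lambda,n}$, independently of $\lambda$.

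Under that hypothesis the rest of your argument is correct and complete:
\begin{itemize}
\item the support and norm checks, via $X_\lambda\in\Sigma_\epsilon$ and (sim3);
\item measurability of $f|_{X_\lambda}$, from the trace property $\Theta_\lambda=\{A\cap(X_\lambda\ast\bH_\lambda)\mid A\in\Theta_\epsilon\}$;
\item your orthogonality computation, giving $P_{\lambda,\epsilon}f=\chi_{X_\lambda}f$;
\item commuting indicators, giving (lch5).
\end{itemize}
Without such a hypothesis, you must replace $\chi_{X_\lambda}$ by the fibrewise projections $x\mapsto\chi_{X_\lambda}(x)Q_{x,\lambda}$, where $Q_{x,\lambda}$ is the Hermitian projection of $\cH_x$ onto $\cH_{x,\lambda}$. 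Statement (2) then additionally requires, for example, that the fibres $\cH_x$ be representing for $\mu$-a.e.\ $x$.
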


\begin{proof} (1) It is easy to see that the operator defined at \eqref{e:pela} is a Hermitian projection with range 
$L^2_\lambda(X \ast \mathbf{H}, \mu)$ and, by uniqueness, it coincides with $P_\lambda$.

(2)  This is a direct consequence of the statement at item (1).
\end{proof}
 
Any locally Hilbert space has completion to a Hilbert space but, in general, this is an abstract completion.
The second question refers to the possibility to find a concrete form of such a Hilbert space completion of a direct integral locally 
Hilbert space. The answer is positive under the additional assumption that the directed set $\Lambda$ is countable and then
one can always find the Hilbert space completion a direct integral of Hilbert spaces.
 
 \begin{theorem}\label{t:dintcomp} With notation and assumptions as in Definition~\ref{d:bblhs}, consider the measure space
 $(X,\widetilde \Sigma,\widetilde \mu)$, constructed as in \eqref{e:tomega} and \eqref{e:wimu}. Similarly, let $\widetilde\Theta$
 be the $\sigma$-algebra constructed from $\Theta$ as in \eqref{e:tomega}. For each $x\in X$, let $\widetilde \cH_x$ be a Hilbert 
 space completion of the locally Hilbert space $\cH_x$ and consider the Hilbert space bundle 
 $\widetilde\bH=\{\widetilde\cH_x\}_{x\in X}$. In addition, assume that the directed set $\Lambda$ is countable.
 The following statements hold true.
 
 \nr{1} $(X\ast\widetilde \bH,\widetilde \Theta)$ is a Borel bundle of Hilbert spaces, in the sense of Definition~\ref{d:bb},
 and $\widetilde\Sigma=\{E\in X\mid \pi^{-1}(E)\in\widetilde\Theta\}$.
 
 \nr{2} The direct integral Hilbert space $L^2(X\ast\widetilde\bH,\widetilde\mu)$, which exists due to the statement at item (1),
 contains densely the direct integral locally Hilbert space $L^2_{\mathrm{loc}}(X\ast\bH,\mu)$.
 \end{theorem}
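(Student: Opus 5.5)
The plan is to use the countability of $\Lambda$ to reduce everything to a countable, increasing exhaustion. Since $\Lambda$ is countable and directed, I will first fix a cofinal sequence $\lambda_1\leq\lambda_2\leq\cdots$ in $\Lambda$. Then $X=\bigcup_n X_{\lambda_n}$, and
\[
\widetilde\Sigma=\{A\subseteq X\mid A\cap X_{\lambda_n}\in\Sigma_{\lambda_n}\mbox{ for all }n\}.
\]
Also $\widetilde\mu(A)=\lim_n\mu_{\lambda_n}(A\cap X_{\lambda_n})$. All questions of measurability and of null sets then become countably many questions, each posed on a single $X_\lambda$.

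For item (1), I will build a fundamental sequence for $X\ast\widetilde\bH$ from the levels. For each $\lambda\in\Lambda$, let $(f_{\lambda,m})_{m\in\NN}$ be a fundamental sequence of the Borel bundle $(X_\lambda\ast\bH_\lambda,\Theta_\lambda,\pi_\lambda)$, normalised by Gram--Schmidt as in Remark~\ref{r:div}(v). Extend each $f_{\lambda,m}$ by $0$ outside $X_\lambda$ and view it as a section of $X\ast\widetilde\bH$, using $\cH_{x,\lambda}\subseteq\cH_x\subseteq\widetilde\cH_x$. The family $\{f_{\lambda,m}\}_{\lambda\in\Lambda,\,m\in\NN}$ is countable.

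Condition 2.(c) of Definition~\ref{d:bb} is easy. For a fixed $x$, the vectors $f_{\lambda,m}(x)$ with $x\in X_\lambda$ are total in $\cH_{x,\lambda}$, and $\bigcup_\lambda\cH_{x,\lambda}=\cH_x$ is dense in $\widetilde\cH_x$, so the whole family is total in $\widetilde\cH_x$.

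For condition 2.(b), I must show that $x\mapsto\langle f_{\lambda,m}(x),f_{\nu,n}(x)\rangle$ is $\widetilde\Sigma$-measurable. It suffices to check this on each $X_\epsilon$ with $\epsilon\geq\lambda,\nu$. There I will argue that the zero extension of $f_{\lambda,m}$ is a Borel section of $X_\epsilon\ast\bH_\epsilon$. Since $\Theta_\lambda\subseteq\Theta_\epsilon$ and $X_\lambda\ast\bH_\lambda=\pi_\epsilon^{-1}(X_\lambda)\in\Theta_\epsilon$, the function $\tilde f_{\lambda,m}$, extended by zero, is $\Theta_\epsilon$-measurable. By Remark~\ref{r:div}(ii), the coefficient functions $\langle f_{\lambda,m}(x),f_{\epsilon,k}(x)\rangle$ are then Borel on $X_\epsilon$. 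Parseval in $\cH_{x,\epsilon}$ expresses the inner product in question as a series of products of such coefficients, which gives measurability.

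Condition (iii) of Definition~\ref{d:bblhs} should be usable as a backup here. After truncating $f_{\lambda,m}$ by indicators of sets of finite measure, the truncated sections lie in $L^2_\lambda\subseteq L^2_\epsilon$, hence are $\Theta_\epsilon$-Borel a.e. I expect this compatibility between the Borel structures at different levels to be the main obstacle.

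Next I have to say what the Borel structure on $X\ast\widetilde\bH$ is, because $\widetilde\Theta$ is a priori a $\sigma$-algebra on $X\ast\bH$. I will take the $\sigma$-algebra on $X\ast\widetilde\bH$ generated by $\pi^{-1}(\widetilde\Sigma)$ and the functions $\tilde f_{\lambda,m}$. I will then check that its trace on each $X_\lambda\ast\bH_\lambda$ is $\Theta_\lambda$, which yields the identification with $\widetilde\Theta$. The formula $\widetilde\Sigma=\{E\mid\pi^{-1}(E)\in\widetilde\Theta\}$ then follows by the same argument as in Lemma~\ref{l:sigae}, applied levelwise and intersected with each $X_{\lambda_n}$.

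For item (2), I will first map $L^2_\lambda(X\ast\bH,\mu)$ into $L^2(X\ast\widetilde\bH,\widetilde\mu)$. Sections supported in $X_\lambda$ are $\widetilde\Theta$-Borel by the above, and $\widetilde\mu$ restricts to $\mu_\lambda$ on $\Sigma_\lambda$ by Proposition~\ref{p:wimu}. So the embedding is isometric, and by condition (iii) these embeddings are compatible as $\lambda$ increases. This gives an isometric inclusion of $L^2_{\mathrm{loc}}(X\ast\bH,\mu)$.

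For density, take $g\in L^2(X\ast\widetilde\bH,\widetilde\mu)$ orthogonal to all $L^2_\lambda$. Pair $g$ with $\chi_E f_{\lambda,m}$, where $E\subseteq X_\lambda$ has finite measure and $\|f_{\lambda,m}\|$ is bounded on $E$. This shows $\langle g(x),f_{\lambda,m}(x)\rangle=0$ for $\mu_\lambda$-a.e.\ $x\in X_\lambda$, for every $m$. By totality, $g(x)\perp\cH_{x,\lambda}$ a.e.\ on $X_\lambda$. Running $\lambda$ along the cofinal sequence and using countability, $g(x)\perp\cH_x$, hence $g(x)=0$, for $\widetilde\mu$-a.e.\ $x$. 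So $g=0$ and the inclusion is dense.
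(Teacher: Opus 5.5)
Your route is the paper's. The countable union of the zero-extended level fundamental sequences serves as a fundamental sequence for $X\ast\widetilde\bH$. Density comes from showing that a vector orthogonal to every $L^2_\lambda(X\ast\bH,\mu)$ vanishes $\mu_\lambda$-a.e.\ on each $X_\lambda$, and then using countability.

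You are more careful than the paper in several places:
\begin{itemize}
\item you treat the cross inner products $\langle f_{\lambda,m}(x),f_{\nu,n}(x)\rangle$ with $\lambda\neq\nu$;
\item you notice that $\widetilde\Theta$ is a priori a $\sigma$-algebra on $X\ast\bH$, not on $X\ast\widetilde\bH$;
\item your truncation by $\chi_E$ in the density step needs no $\sigma$-finiteness, since $\{x\mid |\langle g(x),f_{\lambda,m}(x)\rangle|>1/k\}$ has finite measure.
\end{itemize}

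However, your justification of the cross-level measurability in 2.(b) does not work; this is precisely the point the paper dismisses with ``it is easy to see''. The identity $X_\lambda\ast\bH_\lambda=\pi_\epsilon^{-1}(X_\lambda)$ is false. Over $x\in X_\lambda$ the fibre of $\pi_\epsilon^{-1}(X_\lambda)$ is $\cH_{x,\epsilon}$, which in general strictly contains $\cH_{x,\lambda}$. So the zero extension of $\tilde f_{\lambda,m}$ to $X_\epsilon\ast\bH_\epsilon$ is indeed $\Theta_\epsilon$-measurable (as $\Theta_\lambda\subseteq\Theta_\epsilon$), but it is not the function $(x,h)\mapsto\langle f_{\lambda,m}(x),h\rangle_{\cH_{x,\epsilon}}$ attached to the zero-extended section. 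Take $(x,f_{\lambda,m}(x)+k)$ with $0\neq k\in\cH_{x,\epsilon}\ominus\cH_{x,\lambda}$: the former vanishes there, while the latter equals $\|f_{\lambda,m}(x)\|^2$. Hence you have not shown that the zero-extended $f_{\lambda,m}$ is a Borel section at level $\epsilon$.

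The repair is to restrict instead of extend. For $x\in X_\lambda$ you have $\langle f_{\epsilon,k}(x),f_{\lambda,m}(x)\rangle_{\cH_{x,\epsilon}}=\tilde f_{\epsilon,k}(x,f_{\lambda,m}(x))$.
\begin{itemize}
\item The restriction of $\tilde f_{\epsilon,k}$ to $X_\lambda\ast\bH_\lambda$ is $\Theta_\lambda$-measurable. This holds because, by hypothesis (i) of Definition~\ref{d:bblhs} and (sim2), $\Theta_\lambda$ is the trace of $\Theta_\epsilon$ on $X_\lambda\ast\bH_\lambda$.
\item The section map $x\mapsto(x,f_{\lambda,m}(x))$ is measurable from $(X_\lambda,\Sigma_\lambda)$ to $(X_\lambda\ast\bH_\lambda,\Theta_\lambda)$, provided $\Theta_\lambda$ is generated by $\pi_\lambda^{-1}(\Sigma_\lambda)$ and the functions $\tilde f_{\lambda,j}$. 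Check this on generators using 2.(b) at level $\lambda$. In the analytic case the generation property follows from the uniqueness in Proposition~\ref{p:bm}.
\end{itemize}
Since $X_\lambda\in\Sigma_\lambda\subseteq\Sigma_\epsilon$, the coefficient, set equal to $0$ off $X_\lambda$, is $\Sigma_\epsilon$-measurable on $X_\epsilon$. Your Parseval argument then goes through. This corrected cross-level statement, together with the same generation property, is also what you need to identify the trace of your $\sigma$-algebra on $X_\lambda\ast\bH_\lambda$ with $\Theta_\lambda$.

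Your fallback via condition (iii) cannot replace this step. Membership of $\chi_E f_{\lambda,m}$ in $L^2_\epsilon(X\ast\bH,\mu)$ only yields a $\Theta_\epsilon$-Borel representative up to a $\mu_\epsilon$-null set. Condition 2.(b) requires Borel measurability everywhere.
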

 
 \begin{proof} (1) The equality $\widetilde\Sigma=\{E\subseteq X\mid \pi^{-1}(E)\in\widetilde\Theta\}$
 is a direct consequence of Lemma~\ref{l:sigae} and the construction of $\widetilde\Sigma$ and
 $\widetilde\Theta$ as in \eqref{e:tomega}.
 
By assumption (i) in Definition~\ref{d:bblhs}, for each 
$\lambda\in\Lambda$, $\big ( X_\lambda \ast \mathbf{H}_\lambda, \Theta_\lambda, \pi_\lambda \big)$ is a Borel bundle of Hilbert 
spaces and then, in view of Definition~\ref{d:bb}, there exists a countable set 
$\cD_\lambda\subseteq \cF_{\bH_\lambda}(X_\lambda)$ subject to the following conditions.
\begin{itemize}
\item[(a)] For each $f\in \cD_\lambda$, the map $\tilde f\colon X_\lambda\ast\bH_\lambda\ra \CC$, 
defined by $\tilde f(x,h):=\langle f(x),h\rangle_{\cH_{x,\lambda}}$, is Borel.
\item[(b)] For each $f,g\in \cD_\lambda$, the map $X_\lambda \ni x\mapsto \langle f(x),g(x)\rangle_{\cH_{x,\lambda}}$ is Borel.
\item[(c)]  The set of functions $\{ \tilde f\mid f\in\cD_\lambda\}\cup \{\pi_\lambda\}$ separates the points of the bundle
$X_\lambda\ast\bH_\lambda$.
\end{itemize}

Since $\Lambda$ is countable, the set $\cD:=\bigcup_{\lambda\in\Lambda}\cD_\lambda$ is countable and, by considering trivial 
extensions  of its elements from $X_\lambda$ to $X$, it can be viewed as a subset of $\cF_{\bH}(X)$, in particular a subset
of $\cF_{\widetilde \bH}(X)$. From here and the properties (a) through (c) from above, it is easy to see that $\cD$ 
has the following properties, with respect to $\Sigma$ and $\Theta$.
\begin{itemize}
\item[($\alpha$)] For each $f\in \cD$, the map $\tilde f\colon X\ast\bH\ra \CC$, 
defined by $\tilde f(x,h):=\langle f(x),h\rangle_{\cH_{x}}$, is Borel.
\item[($\beta$)] For each $f,g\in \cD$, the map $X \ni x\mapsto \langle f(x),g(x)\rangle_{\cH_{x}}$ is 
Borel.
\item[($\gamma$)]  The set of functions $\{ \tilde f\mid f\in\cD\}\cup \{\pi\}$ separates the points of the 
bundle $X\ast\bH$.
\end{itemize}

Since pointwise limits of sequences preserve the Borel property of maps, it follows that, when viewing $\cD$ as a subset of 
$\cF_{\widetilde \bH}(X)$, it has the following properties, with respect to the $\sigma$-algebras $\widetilde \Sigma$ and
$\widetilde \Theta$.
\begin{itemize}
\item[($\alpha$)] For each $f\in \cD$, the map $\tilde f\colon X\ast\widetilde\bH\ra \CC$, 
defined by $\tilde f(x,h):=\langle f(x),h\rangle_{\widetilde\cH_{x}}$, is Borel.
\item[($\beta$)] For each $f,g\in \cD$, the map $X \ni x\mapsto \langle f(x),g(x)\rangle_{\widetilde\cH_{x}}$ is 
Borel.
\item[($\gamma$)]  The set of functions $\{ \tilde f\mid f\in\cD\}\cup \{\pi\}$ separates the points of the 
bundle $X\ast\widetilde \bH$.
\end{itemize}
In view of Definition~\ref{d:bb}, it follows that $(X\ast\widetilde \bH,\widetilde \Theta)$ is a Borel bundle of Hilbert spaces.

(2) In view of the definitions of direct integrals of Hilbert spaces, from assumptions we easily get that, for each 
$\lambda\in\Lambda$, the Hilbert space $L^2_\lambda(X\ast\bH,\mu)$ is isometrically included in the Hilbert space
$L^2(X\ast\widetilde\bH,\widetilde \mu)$, in particular the locally Hilbert space $L^2_{\mathrm{loc}}(X\ast\bH,\mu)$ is
isometrically included in $L^2(X\ast\widetilde\bH,\widetilde \mu)$. 

In order to prove that $L^2_{\mathrm{loc}}(X\ast\bH,\mu)$ is dense in $L^2(X\ast\widetilde\bH,\widetilde \mu)$, let $f\in 
L^2_{\mathrm{loc}}(X\ast\bH,\mu)$ be such that it is orthogonal to $L^2(X\ast\widetilde\bH,\widetilde \mu)$, equivalently,
$f$ is orthogonal to $L^2_\lambda(X\ast\bH,\mu)$, for all $\lambda\in\Lambda$. 
This implies that, for each $\lambda\in\Lambda$, the function
$f|_{X_\lambda}$ is null $\mu_\lambda$-a.e.\ on $X_\lambda$ and then, since $\Lambda$ is countable, it follows that $f$ is null 
$\widetilde\mu$-a.e.\ on $X$.
 \end{proof}
 
 One of the most important consequences of the previous theorem is that, for each $\lambda\in\Lambda$, 
 we can view the Hilbert space $L^2_\lambda(X\ast\bH,\mu)$ as a direct integral of Hilbert spaces.
 
 \begin{corollary}\label{c:dihal} 
 With notation and assumptions as in Theorem~\ref{t:dintcomp}, for each $\lambda \in\Lambda$, let 
 $\widetilde\mu_\lambda$ denote the measure on $(X,\widetilde \Sigma)$ defined by
 \begin{equation}
 \widetilde\mu_\lambda(A):=\mu_\lambda(X_\lambda\cap A),\quad A\in\widetilde\Sigma.
 \end{equation}
 Then the Hilbert spaces $L^2_\lambda(X\ast\bH,\mu)$ and $L^2(X\ast\widetilde\bH,\widetilde\mu_\lambda)$ coincide, 
 in particular $L^2_\lambda(X\ast\bH,\mu)$ is a direct integral of Hilbert spaces.
 \end{corollary}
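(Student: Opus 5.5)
The plan is to realise both spaces as subspaces of sections over $X$ and compare them fibrewise and measure-theoretically. First, check that $(X\ast\widetilde\bH,\widetilde\Theta)$ carries $\widetilde\mu_\lambda$ as a measure on $(X,\widetilde\Sigma)$, so that $L^2(X\ast\widetilde\bH,\widetilde\mu_\lambda)$ is defined via Theorem~\ref{t:dintcomp}(1) and Definition~\ref{d:di}. This is immediate: $A\mapsto \mu_\lambda(X_\lambda\cap A)$ is countably additive, because $X_\lambda\cap A\in\Sigma_\lambda$ for $A\in\widetilde\Sigma$ by \eqref{e:tomega}. Moreover, $\widetilde\mu_\lambda$ is concentrated on $X_\lambda$, so every element of $L^2(X\ast\widetilde\bH,\widetilde\mu_\lambda)$ may be represented by a section supported in $X_\lambda$. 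Here we identify sections modulo $\widetilde\mu_\lambda$-null sets, which are exactly sets whose trace on $X_\lambda$ is $\mu_\lambda$-null. This matches the identification used in defining $L^2_\lambda(X\ast\bH,\mu)$.

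For the inclusion $L^2_\lambda(X\ast\bH,\mu)\subseteq L^2(X\ast\widetilde\bH,\widetilde\mu_\lambda)$, take $f\in\cL^2_\lambda(X\ast\bH,\mu)$. Then $f(x)\in\cH_{x,\lambda}\subseteq\cH_x\subseteq\widetilde\cH_x$, and $f$ is a Borel section of $X\ast\widetilde\bH$. To see this, use Remark~\ref{r:div}(ii) with the fundamental sequence $\cD$ built in the proof of Theorem~\ref{t:dintcomp}. For $g\in\cD_\nu$, the function $x\mapsto\langle f(x),g(x)\rangle$ is Borel on each $X_\epsilon$ with $\epsilon\geq\lambda,\nu$, since both $f$ and $g$ are $\Theta_\epsilon$-Borel there (coherence of the strictly inductive Borel bundles). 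Hence it is $\widetilde\Sigma$-measurable. The norms agree because $\int_X\|f\|^2\de\widetilde\mu_\lambda=\int_{X_\lambda}\|f(x)\|^2_{\cH_{x,\lambda}}\de\mu_\lambda$ by the isometric inclusions (lhs4). So the inclusion is isometric.

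For the converse, take a Borel section $F$ of $X\ast\widetilde\bH$ with $\int\|F\|^2\de\widetilde\mu_\lambda<\infty$, and replace it by $\chi_{X_\lambda}F$. This is the main obstacle: we must show that $F(x)\in\cH_{x,\lambda}$ for $\mu_\lambda$-a.e.\ $x$, and that $F|_{X_\lambda}$ is $\Theta_\lambda$-Borel. Pointwise, $F(x)$ lies only in the completion $\widetilde\cH_x$, not a priori in $\cH_{x,\lambda}$.

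I would first try the density argument from the proof of Theorem~\ref{t:dintcomp}(2). Consider the orthogonal complement of $L^2_\lambda(X\ast\bH,\mu)$ inside $L^2(X\ast\widetilde\bH,\widetilde\mu_\lambda)$. The sections $\phi g$ with $g\in\cD_\lambda$ and $\phi$ a bounded $\Sigma_\lambda$-measurable function span a dense set fibrewise. Indeed, $\{g(x)\mid g\in\cD_\lambda\}$ is total in $\cH_{x,\lambda}$ by Remark~\ref{r:div}(iii). So any orthogonal $F$ satisfies $P_{x,\lambda}F(x)=0$ a.e., where $P_{x,\lambda}$ is the orthogonal projection of $\widetilde\cH_x$ onto $\cH_{x,\lambda}$.

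To conclude $F=0$, I expect the intended reading to be that, on $X_\lambda$, the relevant fibres are $\cH_{x,\lambda}$. This is what $L^2(X_\lambda\ast\bH_\lambda,\mu_\lambda)\cong L^2_\lambda(X\ast\bH,\mu)$ encodes. I would therefore identify $L^2(X\ast\widetilde\bH,\widetilde\mu_\lambda)$ with $L^2(X_\lambda\ast\bH_\lambda,\mu_\lambda)$ through the restriction map $f\mapsto f|_{X_\lambda}$ mentioned after \eqref{e:lafag}. If full fibres $\widetilde\cH_x$ are genuinely intended, I would instead prove the coincidence after compressing by the decomposable projection $x\mapsto P_{x,\lambda}$. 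That projection is Borel because $P_{x,\lambda}h=\sum_n\langle h,e_n(x)\rangle e_n(x)$, where $(e_n)$ is a Gram--Schmidt orthonormalisation of $\cD_\lambda$ as in Remark~\ref{r:div}(v). The final step is to check that measurability of $F|_{X_\lambda}$ with respect to $\Theta_\lambda$ follows from its pairings with $e_n$ being $\Sigma_\lambda$-measurable, using Remark~\ref{r:div}(ii).
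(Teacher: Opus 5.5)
Your forward inclusion is correct. For $f\in\cL^2_\lambda(X\ast\bH,\mu)$, the pairings with elements of $\cD$ have $\Sigma_\epsilon$-measurable traces on every $X_\epsilon$, so they are $\widetilde\Sigma$-measurable. Moreover $\int_X\|f\|^2\de\widetilde\mu_\lambda=\int_{X_\lambda}\|f(x)\|^2_{\cH_{x,\lambda}}\de\mu_\lambda(x)$, so $L^2_\lambda(X\ast\bH,\mu)$ embeds isometrically in $L^2(X\ast\widetilde\bH,\widetilde\mu_\lambda)$. This is all the paper's own reasoning supplies: the corollary is stated without proof as a consequence of Theorem~\ref{t:dintcomp}, and the proof of item (2) there only records this isometric inclusion.

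The gap is the reverse inclusion, which you located correctly but did not close. Your orthogonal-complement argument only gives $P_{x,\lambda}F(x)=0$ a.e., not $F=0$. Your ``intended reading'', which identifies $L^2(X\ast\widetilde\bH,\widetilde\mu_\lambda)$ with $L^2(X_\lambda\ast\bH_\lambda,\mu_\lambda)$ by restriction, assumes exactly what is at stake. The restriction of $F$ to $X_\lambda$ still takes values in $\widetilde\cH_x$, not in $\cH_{x,\lambda}$.

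No argument can close this gap, because the reverse inclusion is false as stated. Take $\Lambda=\NN$, $X_\lambda=\{x_0\}$ with $\mu_\lambda$ the unit point mass, and $\cH_{x_0,\lambda}=\CC^\lambda$ with the standard inclusions. Use the constant sections $e_1,\dots,e_\lambda$ as fundamental sequence. Every requirement of Definition~\ref{d:bblhs} and Theorem~\ref{t:dintcomp} is met, and $L^2_\lambda(X\ast\bH,\mu)=\CC^\lambda$. But $\widetilde\mu_\lambda$ is the point mass at $x_0$, so $L^2(X\ast\widetilde\bH,\widetilde\mu_\lambda)=\widetilde\cH_{x_0}=\ell^2(\NN)$. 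In general, any nonzero square integrable Borel section with values in $\widetilde\cH_x\ominus\cH_{x,\lambda}$ is orthogonal to $L^2_\lambda(X\ast\bH,\mu)$.

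What is true is the statement your second fallback actually proves, by compressing with the Borel field $x\mapsto P_{x,\lambda}$ obtained from a Gram--Schmidt orthonormalisation of $\cD_\lambda$:
\[
L^2_\lambda(X\ast\bH,\mu)=\bigl\{F\in L^2(X\ast\widetilde\bH,\widetilde\mu_\lambda)\mid F(x)\in\cH_{x,\lambda}\mbox{ for }\widetilde\mu_\lambda\mbox{-a.e.\ }x\bigr\}.
\]
That is, $L^2_\lambda(X\ast\bH,\mu)$ is the range of the decomposable projection with fibres $P_{x,\lambda}$. Equality with all of $L^2(X\ast\widetilde\bH,\widetilde\mu_\lambda)$ needs an extra hypothesis, such as $\cH_{x,\lambda}=\widetilde\cH_x$ for $\mu_\lambda$-a.e.\ $x\in X_\lambda$. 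This does hold in the setting of Theorem~\ref{t:avn2}, where the fibre over $x\in X_{\lambda,n}$ is $\ell^2_n$ for every larger index. The ``in particular'' clause survives on its own, via the restriction isomorphism $L^2_\lambda(X\ast\bH,\mu)\cong L^2(X_\lambda\ast\bH_\lambda,\mu_\lambda)$ noted after \eqref{e:lafag}.

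So you should say explicitly that the literal claim fails. Then present your argument as a proof of the corrected statement, rather than leaving both readings open.
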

 
\subsection{Locally Decomposable and Locally Diagonalisable Operators} \label{ss:ldldo}
We continue with notation and assumptions as in Definition~\ref{d:bblhs}. So, we have a direct integral of locally 
Hilbert spaces of the analytic Borel bundle $(X\ast\bH,\pi)$, for which we use the simpler notation 
$L^2_{\mathrm{loc}}(X\ast \bH,\mu)$.  In addition, we assume throughout this section that the directed set $\Lambda$ is 
countable. Since all the main results in the following subsections are proven under this assumption, this is not a restriction. In 
view of Corollary \ref{c:dihal}, the Hilbert spaces $L^2_\lambda(X\ast\bH,\mu)$ are direct integral Hilbert spaces with respect to 
the Borel bundle of Hilbert spaces $(X\ast\widetilde\bH,\widetilde\Theta)$ and the measure $\widetilde\mu_\lambda$.

\begin{definition} \label{d:locdec}
With notations and assumptions as before, let $T \in \mathcal{B}_{\text{loc}} \bigl( L^2_{\text{loc}}(X \ast \mathbf{H}, \mu) \bigr)$, 
with $T= \varprojlim\limits_{\lambda \in \Lambda} T_\lambda$, where $T_\lambda\in\cB(L^2_\lambda(X,\mu))$, for each 
$\lambda\in\Lambda$.

\nr{1} $T$ is called \emph{locally decomposable} if, for each $\lambda\in\Lambda$, the operator 
$T_\lambda$ is decomposable, that is,
there exists a family $\{T_{\lambda,x}\}_{x\in X_\lambda}$, where 
$T_{\lambda,x}\in \cB(\cH_{x,\lambda})$, for all $x\in X_\lambda$, such that, for all $f\in L^2_\lambda (X\ast\bH,\mu)$
 we have 
\begin{equation}\label{e:talafex}
(T_\lambda f)(x)=T_{\lambda,x} f(x),\quad \mu\mbox{-a.e. } x\in X_\lambda.
\end{equation}

\nr{2} $T$ is called \emph{locally diagonalisable} if, for each $\lambda\in\Lambda$, the operator 
$T_\lambda$ is diagonalisable, that is, there exists $\Phi_\lambda \in L^\infty(X,\mu)$ with
$\esssupp(\Phi_\lambda)\subseteq X_\lambda$, such 
that $T_\lambda$ is the multiplication operator with $\Phi_\lambda$, $\mu$-a.e.\ on $X_\lambda$, that is,
for all $f\in L^2_\lambda(X\ast\bH,\mu)$, we have
\begin{equation}\label{e:talafexa}
(T_\lambda f)(x)=\Phi_\lambda(x) f(x),\quad  \mu\mbox{-a.e. } x\in X.
\end{equation}

We denote by $\mathcal{B}^{\mathrm{dec}}_{\mathrm{loc}} \big ( L^2_{\mathrm{loc}}(X \ast \mathbf{H}, \mu) \big )$
the collection of all locally decomposable operators and we denote by 
$\mathcal{B}^{\mathrm{diag}}_{\mathrm{loc}} \big ( L^2_{\mathrm{loc}}(X \ast \mathbf{H}, \mu) \big )$
the collection of all locally diagonalisable operators.
\end{definition}

\begin{theorem}\label{t:locdecdeg}
With notation and assumptions as in Definition~\ref{d:bblhs}, consider the direct integral of locally 
Hilbert spaces $L^2_{\mathrm{loc}}(X\ast \bH,\mu)$ with respect to 
the analytic Borel bundle $(X\ast\bH,\pi)$ and, in addition, we assume that the directed set $\Lambda$ is 
countable.

\nr{1} $\mathcal{B}^{\mathrm{dec}}_{\mathrm{loc}} \big ( L^2_{\mathrm{loc}}(X \ast \mathbf{H}, \mu) \big )$  and
$\mathcal{B}^{\mathrm{diag}}_{\mathrm{loc}} \big ( L^2_{\mathrm{loc}}(X \ast \mathbf{H}, \mu) \big )$  are locally von 
Neumann algebras and the latter is Abelian..

\nr{2} For each $\epsilon\in\Lambda$ and for each $S\in\cB_{\mathrm{dec}}(L^2_\epsilon(X,\mu))$, 
there exists
$T=\varprojlim\limits_{\lambda \in \Lambda} T_\lambda\in \mathcal{B}^{\mathrm{dec}}_{\mathrm{loc}} \big ( L^2_{\mathrm{loc}}(X 
\ast \mathbf{H}, \mu) \big )$ such that $T_\epsilon=S$.

\nr{3} For each $\epsilon\in\Lambda$ and for each $S\in\cB_{\mathrm{diag}}(L^2_\epsilon(X,\mu))$, 
there exists
$T=\varprojlim\limits_{\lambda \in \Lambda} T_\lambda\in \mathcal{B}^{\mathrm{diag}}_{\mathrm{loc}} \big( 
L^2_{\mathrm{loc}}(X \ast \mathbf{H}, \mu) \big )$ such that $T_\epsilon=S$.

\nr{4} $\mathcal{B}^{\mathrm{dec}}_{\mathrm{loc}} \big ( L^2_{\mathrm{loc}}(X \ast \mathbf{H}, \mu) \big )=
\mathcal{B}^{\mathrm{diag}}_{\mathrm{loc}} \big ( L^2_{\mathrm{loc}}(X \ast \mathbf{H}, \mu) \big )^\prime$. 

\nr{5}
$\mathcal{B}^{\mathrm{diag}}_{\mathrm{loc}} \big ( L^2_{\mathrm{loc}}(X \ast \mathbf{H}, \mu) \big )=
\mathcal{B}^{\mathrm{dec}}_{\mathrm{loc}} \big ( L^2_{\mathrm{loc}}(X \ast \mathbf{H}, \mu) \big )^\prime$.

\end{theorem}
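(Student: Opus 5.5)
The plan is to reduce everything to the classical Theorem~\ref{t:decdeg}, applied level by level. By Corollary~\ref{c:dihal}, each $L^2_\lambda(X\ast\bH,\mu)$ is the direct integral $L^2(X\ast\widetilde\bH,\widetilde\mu_\lambda)$. Hence $\cM_\lambda:=\cB_{\mathrm{dec}}(L^2_\lambda)$ and $\cZ_\lambda:=\cB_{\mathrm{diag}}(L^2_\lambda)$ are von Neumann algebras, each the commutant of the other in $\cB(L^2_\lambda)$.

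The key structural observation is the following. Write $L^2_\nu=L^2_\lambda\oplus(L^2_\nu\ominus L^2_\lambda)$ for $\lambda\le\nu$. By Proposition~\ref{p:dintrep}, the projection $P_{\lambda,\nu}$ is multiplication by $\chi_{X_\lambda}$, which is diagonalisable. Therefore every decomposable $S\in\cM_\nu$ commutes with $P_{\lambda,\nu}$. Its compression to $L^2_\lambda$ is again decomposable, with fibres $P_{x,\lambda,\nu}S_xP_{x,\lambda,\nu}$, where $P_{x,\lambda,\nu}$ projects $\cH_{x,\nu}$ onto $\cH_{x,\lambda}$. One caveat needs checking: $L^2_\lambda$ has fibres $\cH_{x,\lambda}$ rather than $\cH_{x,\nu}$, so we need the fibrewise projections to be measurable. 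This should follow from the strict inductivity in Definition~\ref{d:bblhs}(iii).

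\emph{Items (2) and (3).} Given $S\in\cB_{\mathrm{dec}}(L^2_\epsilon)$, define $T_\lambda$ as follows.
\begin{itemize}
\item For $\lambda\le\epsilon$, let $T_\lambda$ be the compression of $S$.
\item In general, let $T_\lambda$ act by the fibres $S_x$ on $x\in X_\lambda\cap X_\epsilon$, suitably compressed or extended by $0$ on $\cH_{x,\lambda}\ominus\cH_{x,\epsilon}$, and by $0$ on $X_\lambda\setminus X_\epsilon$.
\end{itemize}
A cleaner route is to take $T:=SP_\epsilon$, viewing $S$ as an operator on $L^2_\epsilon$ extended by zero. We then check the block-diagonal condition of Proposition~\ref{p:lbo2}(2), which reduces to $S$ commuting with $P_{\lambda\wedge\cdot}$, i.e.\ with multiplication by $\chi_{X_\lambda}$ and with the fibre projections. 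For (3) this is immediate: if $S$ is multiplication by $\Phi$, take $T_\lambda=M_{\Phi\chi_{X_\epsilon}}$ on $L^2_\lambda$.

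\emph{Item (1).} Both sets are $*$-subalgebras of $\Lloc$, since composition and adjoints act levelwise by \eqref{e:setep} and \eqref{e:temuhadj}, and decomposability is preserved at each level. They are unital. For (LVN3), $P_\lambda$ is itself locally diagonalisable, being multiplication by $\chi_{X_\lambda}$. For (LVN2), I use Propositions~\ref{p:wot} and \ref{p:sot}: a WOT-convergent net in $\Lloc$ converges at each level $\lambda$, so the limit's $\lambda$-component lies in the WOT-closed $\cM_\lambda$ (resp.\ $\cZ_\lambda$). Alternatively, (1) follows from Theorem~\ref{t:lvna}, since items (2) and (3) show that the images $\pi_\lambda$ of our algebras are exactly $\cM_\lambda,\cZ_\lambda$, giving multinormed $W^*$-algebras. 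Commutativity of the diagonal algebra is clear levelwise.

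\emph{Items (4) and (5).} The inclusions $\mathcal{B}^{\mathrm{dec}}_{\mathrm{loc}}\subseteq(\mathcal{B}^{\mathrm{diag}}_{\mathrm{loc}})'$ and $\mathcal{B}^{\mathrm{diag}}_{\mathrm{loc}}\subseteq(\mathcal{B}^{\mathrm{dec}}_{\mathrm{loc}})'$ are levelwise consequences of Theorem~\ref{t:decdeg}. For the reverse inclusion, let $T\in\Lloc$ commute with all locally diagonalisable operators. Fix $\epsilon$ and $S\in\cZ_\epsilon$. By (3), pick $R\in\mathcal{B}^{\mathrm{diag}}_{\mathrm{loc}}$ with $R_\epsilon=S$. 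Then $T_\epsilon S=(TR)_\epsilon=(RT)_\epsilon=ST_\epsilon$, so $T_\epsilon\in\cZ_\epsilon'=\cM_\epsilon$. Hence $T$ is locally decomposable. Item (5) follows in the same way using (2). Alternatively, it follows from (4) and the double commutant Theorem~\ref{t:dc}, since the representing property holds by Proposition~\ref{p:dintrep}.

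The main obstacle is item (2): producing a coherent net extending a decomposable $S$ on $L^2_\epsilon$. This requires showing that the fibre operators interact correctly with the inclusions $\cH_{x,\lambda}\subseteq\cH_{x,\nu}$, and that the resulting family is measurable with respect to each $\Theta_\lambda$. I would first try $T=\varprojlim_\lambda P_\lambda S P_\epsilon|_{L^2_\lambda}$, and verify the condition of Proposition~\ref{p:lbo2} using the fact that $P_\epsilon$ and $P_\lambda$ commute (representing).
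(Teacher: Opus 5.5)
You follow the paper's strategy: reduce levelwise to Theorem~\ref{t:decdeg} through Corollary~\ref{c:dihal}. Once Proposition~\ref{p:dintrep} is granted, your arguments for (1), (3), (4) and (5) are sound, and in places more careful than the paper's. You check (LVN2) via Proposition~\ref{p:wot}. You use (3) (resp.\ (2)) to lift an arbitrary level-$\epsilon$ operator, so that $T_\epsilon$ really commutes with all of $\cZ_\epsilon$ (resp.\ $\cM_\epsilon$); the paper passes over this step. And your double-commutant route to (5) via Theorem~\ref{t:dc} does not even need (2).

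The genuine gap is item (2), which you leave open, and the obstacle you name points the wrong way. Your ``key structural observation'' already contains the tension. If $P_{\lambda,\nu}$ is multiplication by $\chi_{X_\lambda}$, then the fibre projections $P_{x,\lambda,\nu}$ are identities over $X_\lambda$. If they are not, then $P_{\lambda,\nu}$ is not diagonalisable, and decomposable operators need not commute with it. So the issue is commutation, not measurability. By Proposition~\ref{p:lbo2}, $T_\epsilon$ must commute with $P_{\lambda,\epsilon}$ for $\lambda\leq\epsilon$, and once fibres genuinely grow, (2) fails. Take $\Lambda=\{1\leq 2\}$, $X_1=X_2=\{x_0\}$ with a unit point mass, and $\cH_{x_0,1}=\CC\subset\CC^2=\cH_{x_0,2}$. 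The flip matrix is decomposable on $L^2_2=\CC^2$ but does not leave $L^2_1=\CC e_1$ invariant, so it is not the level-$2$ component of any locally bounded operator. In that example $P_1$ is not diagonalisable, so your (LVN3) argument for the diagonal algebra also fails, as does item (5). The ``compress or extend by $0$'' recipe of your first bullet is not coherent there either: compressing the flip gives $T_1=0$, while $T_2e_1=e_2$.

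The missing step is to use Proposition~\ref{p:dintrep} consistently, as the paper does. It says $P_\lambda$ is multiplication by $\chi_{X_\lambda}$. This tacitly means that, up to null sets, $\cH_{x,\lambda}=\cH_{x,\nu}$ for $x\in X_\lambda$ and $\lambda\leq\nu$, so your caveat about fibre projections is vacuous. With that, your ``cleaner route'' finishes at once. Set $T_\lambda:=P_\lambda SP_\epsilon|_{L^2_\lambda}$; it acts fibrewise by $S_x$ on $X_\lambda\cap X_\epsilon$ and by $0$ elsewhere, which is precisely the paper's construction. It maps $L^2_\lambda$ into itself and satisfies $T_\nu|_{L^2_\lambda}=T_\lambda$ for $\lambda\leq\nu$. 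Moreover, $T_\nu$ commutes with $P_{\lambda,\nu}$, which is multiplication by $\chi_{X_\lambda}$, because decomposable operators commute with diagonalisable ones. Proposition~\ref{p:lbo2} then gives $T\in\mathcal{B}^{\mathrm{dec}}_{\mathrm{loc}}$ with $T_\epsilon=S$, and (3) is the special case $S_x=\Phi(x)I$.
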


\begin{proof} (1) With respect to Definition~\ref{d:lvn},  in order to prove that
$\mathcal{B}^{\mathrm{dec}}_{\mathrm{loc}} \big ( L^2_{\mathrm{loc}}(X \ast \mathbf{H}, \mu) \big )$ is a locally von Neumann 
algebra, the only axiom that needs to be verified is (LVN3). So, 
$T= \varprojlim\limits_{\lambda \in \Lambda} T_\lambda\in 
\mathcal{B}^{\mathrm{dec}}_{\mathrm{loc}} \big ( L^2_{\mathrm{loc}}(X \ast \mathbf{H}, \mu) \big )$ and fix 
$\epsilon\in\Lambda$ arbitrary. We have to prove that $TP_\epsilon$ is decomposable as well. Since $T$ is decomposable,
for each $\lambda\in\Lambda$, there exists $\{T_{\lambda,x}\}_{x\in X_\lambda}$ such that, for each 
$f\in L_\lambda^2(X\ast\bH,\mu)$, we have 
\begin{equation}\label{e:telaf}
(T_\lambda f)(x)=T_{\lambda,x}f(x),\quad \mu\mbox{-a.e.}\ x\in X_\lambda.\end{equation}
 Also,
\begin{equation*}
(TP_\epsilon)_\lambda=P_\lambda TP_\epsilon \colon \cH_\lambda\ra \cH_\lambda,\quad \lambda\in \Lambda,
\end{equation*}
hence, for each $f\in L^2_\lambda(X\ast\bH,\mu)$, each $\lambda\in\Lambda$, and $\mu$-a.e.\ $x\in X$, in view of 
Proposition~\ref{p:dintrep}, we have
\begin{align*}
\bigl((TP_\epsilon)_\lambda f\bigr)(x) & = (P_\lambda TP_\epsilon f)(x) = (P_\lambda T)(P_\epsilon f)(x) \\
& = (P_\lambda T)\chi_{X_\epsilon}(x)f(x) = \chi_{X_\epsilon} (x) (P_\lambda T f)(x) = \chi_{X_\epsilon} (x) ( T_\lambda f)(x) \\
\intertext{which, in view of \eqref{e:telaf}, equals}
& = \chi_{X_\epsilon} (x)  T_{\lambda,x} f(x),\quad \mu\mbox{-a.e.}\ x\in X.
\end{align*}
In view of Definition~\ref{d:locdec}, this means that $TP_\epsilon$ is locally 
decomposable, and hence the axiom (LVN3) is fulfilled.

As before, in order to prove that $\mathcal{B}^{\mathrm{diag}}_{\mathrm{loc}} 
\big ( L^2_{\mathrm{loc}}(X \ast \mathbf{H}, \mu) \big )$ is a locally von Neumann algebra,
 the only axiom that should be verified is (LVN3), which is a direct consequence of the statement just proven before and 
Definition~\ref{d:locdec}. Indeed, with respect to the notation from item (1), if 
 $T= \varprojlim\limits_{\lambda \in \Lambda} T_\lambda\in 
\mathcal{B}^{\mathrm{diag}}_{\mathrm{loc}} \big ( L^2_{\mathrm{loc}}(X \ast \mathbf{H}, \mu) \big )$ then, for each 
$\lambda\in\Lambda$, there exists $\Phi_\lambda \in L^\infty(X,\mu)$ with $\esssupp(\Phi_\lambda)\subseteq X_\lambda$
and such that \eqref{e:talafexa} holds. For arbitrary but fixed 
$\epsilon\in \Lambda$, from the calculation performed before, it follows
that $T_{\lambda}P_\epsilon$ is the multiplication operator with $\chi_{X_\epsilon}\Phi_\lambda \in L^\infty(X,\mu)$
with $\esssupp(\chi_{X_\lambda}\Phi_\lambda)\subseteq X_\lambda$, hence $TP_\epsilon$ is a locally diagonalisable operator.

(2) If $S\in\cB_{\mathrm{dec}}(L^2_\epsilon(X,\mu))$, for some fixed $\epsilon\in\Lambda$, then there exists 
$\{S_x\}_{x\in X_\epsilon}$ such that, for any $f\in L^2_\epsilon(X\ast\bH,\mu)$, we have $(Sf)(x)=S_x f(x)$, 
$\mu$-a.e.\ $x\in X_\epsilon$. For arbitrary $\lambda\in\Lambda$ different than $\epsilon$, 
let $T_x=S_x$ if $x\in X_\epsilon\cap X_\lambda$,
and $T_x=0$ if $x\in X\setminus (X_\epsilon\cap X_\lambda)$. 
Then define $T_\lambda\colon L^2_\lambda(X\ast\bH,\mu)\ra L^2_\lambda(X\ast\bH,\mu)$ by $(T_\lambda f)(x)
:=T_{x} f(x)$, for all $x\in X$. Since $S$ is bounded, it follows that $T_\lambda$ is bounded in 
$L^2_\lambda(X\ast\bH,\mu)$. With this definition and in view of Proposition~\ref{p:lbo2}, it is easy to see the net $
(T_\lambda)_{\lambda \in \Lambda}$ uniquely defines a locally bounded operator 
$T=\varprojlim\limits_{\lambda \in \Lambda} T_\lambda$ which, in view of Definition~\ref{d:locdec}, is locally decomposable
and $T_\epsilon=S$.

(3) This statement, concerning diagonalisable operators, is a special case of the statement concerning decomposable operators 
just proven, in which $S$ is an operator of multiplication with some $\Phi\in L^\infty(X,\mu)$ with 
$\esssupp(\Phi)\subseteq X_\epsilon$.

(4) If $T= \varprojlim\limits_{\lambda \in \Lambda} T_\lambda\in 
\mathcal{B}^{\mathrm{dec}}_{\mathrm{loc}} \big ( L^2_{\mathrm{loc}}(X \ast \mathbf{H}, \mu) \big )$ then,
for each $\lambda\in\Lambda$, there exists $\{T_{\lambda,x}\}_{x\in X_\lambda}$ such that, for each 
$f\in L_\lambda^2(X\ast\bH,\mu)$, the identity \eqref{e:telaf} holds.
Similarly, if 
$S= \varprojlim\limits_{\lambda \in \Lambda} S_\lambda\in 
\mathcal{B}^{\mathrm{diag}}_{\mathrm{loc}} \big ( L^2_{\mathrm{loc}}(X \ast \mathbf{H}, \mu) \big )$ then,
for each $\lambda\in\Lambda$, there exists $\Phi_\lambda\in L^\infty(X,\mu)$ with 
$\esssupp(\Phi_\lambda)\subseteq X_\lambda$, such that, for each $f\in L_\lambda^2(X\ast\bH,\mu)$, we have
\begin{equation}\label{e:telafux}
(S_\lambda f)(x)=\Phi_{\lambda}(x)f(x),\quad \mu\mbox{-a.e.}\ x\in X.\end{equation}
Then, for each $\lambda\in\Lambda$ and each $f\in L_\lambda^2(X\ast\bH,\mu)$, we have
\begin{equation}\label{e:telas} 
(T_\lambda S_\lambda f)(x)=T_{\lambda,x}\Phi_\lambda(x) f(x)=\Phi_\lambda(x) T_{\lambda,x} f(x)=(S_\lambda T_\lambda f)(x),
\quad \mu\mbox{-a.e. } x\in X.
\end{equation}
This shows that $TS=ST$, see \eqref{e:setep}.

From \eqref{e:telas} we get
\begin{equation*}
\mathcal{B}^{\mathrm{dec}}_{\mathrm{loc}} \big ( L^2_{\mathrm{loc}}(X \ast \mathbf{H}, \mu) \big )\subseteq
\mathcal{B}^{\mathrm{diag}}_{\mathrm{loc}} \big ( L^2_{\mathrm{loc}}(X \ast \mathbf{H}, \mu) \big )^\prime .
\end{equation*}
In order to prove the converse inclusion, let 
$T \in \mathcal{B}^{\mathrm{diag}}_{\mathrm{loc}} \big ( L^2_{\mathrm{loc}}(X \ast \mathbf{H}, \mu) \big )^\prime$ be arbitrary.
By means of Theorem~\ref{t:decdeg}, 
for each $\lambda\in\Lambda$, the operator $T_\lambda$ is decomposable, hence $T$ is locally decomposable.

(5) From \eqref{e:telas} we get 
\begin{equation*}
\mathcal{B}^{\mathrm{diag}}_{\mathrm{loc}} \big ( L^2_{\mathrm{loc}}(X \ast \mathbf{H}, \mu) \big )\subseteq
\mathcal{B}^{\mathrm{dec}}_{\mathrm{loc}} \big ( L^2_{\mathrm{loc}}(X \ast \mathbf{H}, \mu) \big )^\prime.
\end{equation*}
In order to prove the converse inclusion, let 
$T \in \mathcal{B}^{\mathrm{dec}}_{\mathrm{loc}} \big ( L^2_{\mathrm{loc}}(X \ast \mathbf{H}, \mu) \big )^\prime$ be arbitrary.
By means of Theorem~\ref{t:decdeg}, for each $\lambda\in\Lambda$, the operator $T_\lambda$ is diagonalisable, hence $T$ is locally diagonalisable.
\end{proof}

\subsection{Functional Model and Reduction Theory of Abelian Locally von Neumann Algebras.}\label{s:fm}
First, we obtain a functional representation of Abelian locally von Neumann algebras in a form which is 
a local variant of Theorem~\ref{t:abvn} and hence it can be viewed as a reduction theory for these operator 
algebras. In particular, 
it is more general than the direct integral of locally Hilbert spaces representation since it requires no separability condition.
A directed set $\Lambda$ is called \emph{sequentially finite} if it has the following properties.
\begin{itemize}
\item[(c1)] There exists a sequence $(\epsilon_m)_{m\geq 1}$ of elements of $\Lambda$ such that for 
each $m\geq 1$ we have $\epsilon_m\leq \epsilon_{m+1}$.
\item[(c2)] For each $\lambda\in \Lambda$ there exists $m\geq 1$ such that $\lambda\leq\epsilon_m$.
\item[(c3)]  For each $m\geq 1$ the set of all $\lambda\in\Lambda$ such that $\lambda\leq\epsilon_m$ 
is finite.
\end{itemize}
Note that any sequentially finite directed set is countable.

\begin{theorem}\label{t:avn1} Assume that the directed set $\Lambda$ is sequentially finite,
consider a representing locally Hilbert space
$\cH=\varinjlim_{\lambda\in\Lambda}\cH_\lambda$, and let $\cM=\varprojlim_{\lambda\in\Lambda} \cM_\lambda$ 
be an Abelian locally von Neumann algebra in $\Lloc(\cH)$. Then, the following assertions hold true.
\begin{itemize} 
\item[(a)] For each $n\in\NN\cup\{\infty\}$,
there exists a strictly inductive system of locally finite 
measure spaces $((X_{\lambda,n},\Omega_{\lambda,n},\mu_{\lambda,n}))_{\lambda\in\Lambda}$ and let 
$(X_n,\Omega_n,\mu_n)$ denote its inductive limit.
\item[(b)] For each $\lambda\in\Lambda$ there exists a decomposition
\begin{equation}\label{e:hala}
\cH_\lambda =\cH_{\lambda,\infty}\oplus\bigoplus_{n=1}^\infty \cH_{\lambda,n},
\end{equation}
such that, for each $n\in\NN\cup\{\infty\}$ we have a strictly inductive net of Hilbert spaces 
$(\cH_{\lambda,n})_{\lambda\in\Lambda}$.
\item[(c)] For each $\lambda\in\Lambda$ and for each $n\in\NN\cup\{\infty\}$, the Hilbert space 
$\cH_{\lambda,n}$ is invariant under the von Neumann algebra $\cM_\lambda$ and, 
letting $P_{\cH_{\lambda,n}}$ 
denote the orthogonal projection of $\cH_\lambda$ onto $\cH_{\lambda,n}$, the reduced von Neumann algebra
$\cM_{\lambda,\cH_{\lambda,n}}:=P_{\cH_{\lambda,n}} \cM_\lambda|_{\cH_{\lambda,n}}$ is spatially isomorphic
to the $n$-fold amplification $I_n\otimes L^\infty(X_{\lambda,n},\mu_{\lambda,n})$, more precisely, there exists a 
unitary operator $U_{\lambda,n}\colon \cH_{\lambda,n}\ra
\ell^2_n\otimes L^2_\lambda(X_{n},\mu_{n})$ 
such that the map
\begin{equation*}
\cM_{\lambda,\cH_{\lambda,n}}\ni Z\mapsto U_{\lambda,n}ZU_{\lambda,n}^*\in I_n\otimes 
L^\infty(X_{\lambda,n},\mu_{\lambda,n})
\end{equation*}
is an isomorphism of von Neumann algebras.
\item[(d)] The decomposition \eqref{e:hala} is unique with the property (c).
\item[(e)] Letting
\begin{equation}\label{e:ulam}
U_\lambda:=U_{\lambda,\infty}\oplus\bigoplus_{n\geq 1}U_{\lambda,n}\colon
\cH_\lambda \ra \bigl(\ell^2_\infty\otimes L^2_\lambda(X_{\infty},\mu_\infty)\bigr)\oplus
\bigoplus_{n\geq 1}\bigl(\ell^2_n\otimes L^2_\lambda(X_{n},\mu_n)\bigr),\quad \lambda\in\Lambda,
\end{equation}
we get a net of unitary operators $(U_{\lambda})_{\lambda\in\Lambda}$ which yields the locally unitary 
operator
\begin{equation}\label{e:uvaram}
U=\varprojlim_{\lambda\in\Lambda}U_\lambda\colon \cH=\varinjlim_{\lambda\in\Lambda}\cH_\lambda\ra 
\varinjlim_{\lambda\in\Lambda}\biggl(\bigl(\ell^2_\infty\otimes L^2_\lambda(X_{\infty},
\mu_{\infty})\bigr)
\oplus \bigoplus_{n\geq 1} \bigl(\ell^2_n\otimes L^2_\lambda(X_{n},\mu_{n})\bigr)\biggr),
\end{equation}
such that
\begin{equation}\label{e:uulah}
U^*\cM U=\varprojlim_{\lambda\in\Lambda}\bigl((I_\infty\otimes L^\infty(X_{\lambda,\infty},\mu_{\lambda,\infty}))\oplus
\bigoplus_{n\geq 1} (I_n\otimes L^\infty(X_{\lambda,n},\mu_{\lambda,n}))\bigr).
\end{equation}
\end{itemize}
\end{theorem}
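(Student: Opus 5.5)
We build everything inductively along the cofinal sequence $(\epsilon_m)_{m\geq1}$ supplied by (c1)--(c3), using Theorem~\ref{t:abvn} at each level and then glueing. The key structural input is that $\cH$ is representing and $\cM$ satisfies (LVN3). Hence for $\lambda\leq\nu$ the projection $P_{\lambda,\nu}$ of $\cH_\nu$ onto $\cH_\lambda$ equals $\pi_\nu(P_\lambda)$, with $P_\lambda\in\cM'$ by \eqref{e:coh}. Since $\cM P_\lambda\subseteq\cM$, the von Neumann algebra $\cM_\nu$ contains elements $Z P_{\lambda,\nu}$. Moreover $\cM_\lambda=\pi_{\lambda,\nu}(\cM_\nu)$ is exactly the reduction $(\cM_\nu)_{P_{\lambda,\nu}}$. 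In the Abelian case I expect $P_{\lambda,\nu}$ to actually lie in $\cM_\nu$: take $Z=I$ in $\cM P_\lambda\subseteq\cM$ and use the unit axiom (LVN3$'$). Thus each $\cH_\lambda$ is the range of a projection in the Abelian algebra $\cM_\nu$.

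Next I apply the multiplicity-free uniqueness in Theorem~\ref{t:abvn}. For an Abelian von Neumann algebra $\cZ$ on $\cK$ and a projection $E\in\cZ$, the canonical decomposition $\cK=\bigoplus_n\cK_n$ is given by central-type projections $F_n\in\cZ'$. In fact $F_n\in\cZ$, because these are the multiplicity projections of the type I algebra $\cZ'$ and lie in its centre $\cZ$. Consequently the decomposition of $E\cK$ for $\cZ_E$ is $E F_n\cK$. Applying this with $\cZ=\cM_\nu$ and $E=P_{\lambda,\nu}$ gives $\cH_{\lambda,n}=\cH_\lambda\cap\cH_{\nu,n}$. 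Therefore, if we decompose each $\cH_\lambda$ by Theorem~\ref{t:abvn}, the nets $(\cH_{\lambda,n})_\lambda$ are automatically strictly inductive. This proves (b), (c) at the Hilbert-space level, and (d) by uniqueness in Theorem~\ref{t:abvn}.

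The measure spaces and unitaries are constructed along the sequence. Start with $\cH_{\epsilon_1,n}$ and a unitary $U_{\epsilon_1,n}$ onto $\ell^2_n\otimes L^2(Y_{1,n},\nu_{1,n})$ from Theorem~\ref{t:abvn}. Since $P_{\epsilon_1,\epsilon_2}F_n$ lies in $\cM_{\epsilon_2}F_n\cong I_n\otimes L^\infty(Y_{2,n})$, it corresponds to $I_n\otimes\chi_{A}$ for a measurable set $A\subseteq Y_{2,n}$. Composing with a measure-space isomorphism, I can arrange $A=Y_{1,n}$ as an actual subset and $U_{\epsilon_2,n}|_{\cH_{\epsilon_1,n}}=U_{\epsilon_1,n}$. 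This uses the uniqueness up to unitary equivalence of the representation of $(\cM_{\epsilon_1})_{\cH_{\epsilon_1,n}}$, and replaces $Y_{2,n}$ by the disjoint union $Y_{1,n}\sqcup(Y_{2,n}\setminus A)$, so that inclusion rather than mere embedding holds, as demanded by (sim2). For a general $\lambda\leq\epsilon_m$, define $X_{\lambda,n}\subseteq X_{\epsilon_m,n}$ as the set corresponding to $P_{\lambda,\epsilon_m}F_n$. By (c3) only finitely many $\lambda$ lie below each $\epsilon_m$, so these finitely many sets can be fixed coherently at stage $m$. The sets must be consistent with the commuting projections: since all $P_{\lambda,\epsilon_m}$ lie in one Abelian algebra, their sets satisfy $X_{\lambda,n}\cap X_{\lambda',n}$ corresponding to $P_\lambda P_{\lambda'}$, and $X_{\lambda,n}\subseteq X_{\lambda',n}$ whenever $\lambda\leq\lambda'$. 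The measures are restrictions of the measures on $X_{\epsilon_m,n}$, which yields (sim1)--(sim3) and (a). Local finiteness is inherited from Theorem~\ref{t:abvn}.

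Finally, (e) follows from Proposition~\ref{p:dirsum}, the special orthogonal direct sums, and Proposition~\ref{p:lbo2}. The compatibilities $U_\nu|_{\cH_\lambda}=U_\lambda$ and $U_\nu P_{\lambda,\nu}=P'_{\lambda,\nu}U_\nu$ hold by construction, so $U=\varprojlim U_\lambda$ is locally unitary, and \eqref{e:uulah} is checked level by level. The main obstacle is the glueing step. One must modify the measure spaces, turning measurable subsets into genuine subsets and replacing a.e.-defined sets by sets chosen consistently, so that the inclusions are literal, and this must be done uniformly in $n$. I would first handle it by making the disjoint-union replacements above at each $\epsilon_m$, and only afterwards restrict to the finitely many intermediate $\lambda$.
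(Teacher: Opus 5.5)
Your proposal is correct. Its skeleton is the same as the paper's: induction along $(\epsilon_m)$, Theorem~\ref{t:abvn} at each stage, and glueing by disjoint unions of measure spaces and direct sums of unitaries. The central step, however, is organised differently.

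The paper splits $\cH_{\epsilon_{m+1}}$ into the finitely many atoms of the Boolean algebra generated by the commuting projections $P_{\lambda,\epsilon_{m+1}}$, $\lambda\in\Lambda_{\epsilon_{m+1}}$. It applies Theorem~\ref{t:abvn} atom by atom, reusing the stage-$m$ data by restriction for atoms inside $\cH_{\epsilon_m}$. It then takes $X_{\lambda,n}$ to be the disjoint union of the atom spaces lying in $\cH_\lambda$.

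You instead isolate two structural facts:
\begin{itemize}
\item $P_\lambda=I\,P_\lambda\in\cM$, by (LVN3) and the unit axiom. This holds for any locally von Neumann algebra on a representing space, not only Abelian ones.
\item The multiplicity projections $F_{\nu,n}$ lie in $\cM_\nu$, because they are central projections of the type I algebra $\cM_\nu'$, whose centre is $\cM_\nu'\cap\cM_\nu''=\cM_\nu$.
\end{itemize}
Together these give $\cH_{\lambda,n}=\cH_\lambda\cap\cH_{\nu,n}$ at once. Hence (b), (d) and the invariance part of (c) hold for an arbitrary directed set, and sequential finiteness is needed only to glue the measure spaces and unitaries.

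Your route also makes explicit something the paper uses tacitly. The paper only records that $\cM_{\epsilon_{m+1}}$ commutes with the $P_{\lambda,\epsilon_{m+1}}$. But assembling the atom-wise models into a model of all of $\cM_{\epsilon_{m+1}}$ requires these projections to belong to $\cM_{\epsilon_{m+1}}$, which is exactly your observation.

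Conversely, the paper's partition bookkeeping keeps the inclusions among the $X_{\lambda,n}$ literal. You must instead choose representatives of a.e.-defined sets coherently. This is easy: for each $\lambda$, pick a representative $Y_\lambda\subseteq X_{\epsilon_m,n}$ for the least $m$ with $\lambda\leq\epsilon_m$, taking $Y_{\epsilon_m}=X_{\epsilon_m,n}$. Then set $X_{\lambda,n}:=\bigcup_{\lambda''\leq\lambda}Y_{\lambda''}$, which is a finite union by (c3) and is monotone in $\lambda$.

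One correction to your glueing step: you need neither a measure-space isomorphism nor uniqueness up to unitary equivalence. A spatial isomorphism between $I_n\otimes L^\infty(Y_{1,n})$ and $I_n\otimes L^\infty(A)$ need not come from a point map. Even if it did, composing with one would not by itself force $U_{\epsilon_2,n}|_{\cH_{\epsilon_1,n}}=U_{\epsilon_1,n}$. Instead, simply define the new unitary as $U_{\epsilon_1,n}\oplus U_{\epsilon_2,n}|_{(I-P_{\epsilon_1,\epsilon_2})F_n\cH_{\epsilon_2}}$, mapping onto $\ell^2_n\otimes L^2\bigl(Y_{1,n}\sqcup(Y_{2,n}\setminus A)\bigr)$. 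It implements the required isomorphism because $P_{\epsilon_1,\epsilon_2}F_n\in\cM_{\epsilon_2}$ splits $\cM_{\epsilon_2}F_n$ as the direct sum of its two reductions.
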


\begin{proof} The proof follows to a large extent the proof of Theorem~4.16 in \cite{Gheondea4} but there are 
some details that go on slightly different paths hence we provide a full proof for the convenience of the readers.
Throughout this proof, if $\Lambda$ is a directed set and $\epsilon\in\Lambda$, we use the notation
\begin{equation}\label{e:lal}
\Lambda_\epsilon:=\{\lambda\in\Lambda \mid \lambda\leq \epsilon\}.
\end{equation}

We construct inductively, following $m\in \NN$, where $(\epsilon_m)_{m\geq 1}$ 
is the sequence of elements in $\Lambda$ subject to the conditions (c1) through (c3),
the strictly inductive systems of locally finite measure spaces 
$((X_{\lambda,n},\Omega_{\lambda,n},\mu_{\lambda,n}))_{\lambda\in\Lambda}$, for all $n\in\NN\cup\{\infty\}$, 
that satisfy the following. \medskip

\textbf{Statement S.} \emph{For each $m\in \NN$ and for each $\lambda\in \Lambda_{\epsilon_m}$,
there exists a unique decomposition 
\begin{equation}\label{e:hepsi}
\cH_\lambda=\cH_{\lambda,\infty}\oplus\bigoplus_{n\geq 1}\cH_{\lambda,n}
\end{equation}  
such that, for each $n\in\NN\cup\{\infty\}$,
\begin{itemize}
\item[(i)] $\cH_{\lambda,n}$ is invariant under  the von Neumann algebra $\cM_\lambda$ and, consequently, 
the compression of $\cM_\lambda$ to $\cH_{\lambda,n}$,
\begin{equation*}
\cM_{\lambda,\cH{\lambda,_n}}:=\{P_{\cH_{\lambda,n}}T|_{\cH_{\lambda,n}}\mid T\in\cM_\lambda\}\subseteq \cB(\cH_{\lambda,n}),
\end{equation*}
is a commutative von Neumann algebra;
\item[(ii)] there exists a unitary operator $U_{\lambda,n}\colon \cH_{\lambda,n}\ra \ell^2_n\otimes 
L^2_\lambda(X_{n},\mu_{n})$ that implements the spatial isomorphism of von Neumann algebras
\begin{equation}\label{e:phie}
I_n\otimes L^\infty(X_{\lambda,n},\mu_{\lambda,n})\ni(I_n\otimes \phi)\mapsto
U_{\lambda,n}^* (I_n\otimes M_\phi)U_{\lambda,n}\in \cM_{\lambda,\cH_n}.
\end{equation}
\end{itemize}
}

We first make the observation that, without any loss of generality, 
we can 
assume that for each $m\in\NN$ we have $\epsilon_m\neq \epsilon_{m+1}$ and that $\cH_{\epsilon_m}$ is a 
proper subspace of $\cH_{\epsilon_{m+1}}$. 

For $m=1$, with notation as in \eqref{e:lal}, we consider the finite set 
$\Lambda_{\epsilon_1}=\{\lambda_{0,1},\ldots,\lambda_{0,k_0}\}$ and 
note that the commutative von Neumann algebra $\cM_{\epsilon_1}$ 
commutes with all orthogonal projections $P_{\lambda_{0,1},\epsilon_1},\ldots, P_{\lambda_{0,k_0},\epsilon_1}$. 
Recall that all these operators act in the Hilbert space $\cH_{\epsilon_1}$, more precisely, 
$P_{\lambda_{0,1},\epsilon_1}$ is 
the orthogonal projection of $\cH_{\epsilon_1}$ onto $\cH_{\lambda_{0,1}}$ and so on.
Because the projections 
$P_{\lambda_{0,1},\epsilon_1},\ldots, P_{\lambda_{0,k_0},\epsilon_1}$ mutually commute, using the ranges 
of all nonzero projections which are finite products of 
projections of type $P_{\lambda_{0,i},\epsilon_1}$ and $I_{\cH_{\epsilon_1}}-P_{\lambda_{0,j},\epsilon_1}$, for
$i,j\in\{1,\ldots,k_0\}$,
the Hilbert space $\cH_{\epsilon_1}$ 
can be represented as an orthogonal sum of subspaces $\cH_{\epsilon_1,1},\ldots,\cH_{\epsilon_1,l_1}$
\begin{equation}
\cH_{\epsilon_1}=\cH_{\epsilon_1,1}\oplus\cdots\oplus\cH_{\epsilon_1,l_1},
\end{equation} 
all of them
invariant under $\cM_{\epsilon_1}$ and such that, for each $j\in \{1,\ldots,k_0\}$, the Hilbert space 
$\cH_{\lambda_{0,j}}$ is an orthogonal sum of some of these subspaces, 
$\cH_{\epsilon_1,p_1},\ldots,\cH_{\epsilon_1,p_{s_j}}$, that is
\begin{equation}\label{e:haloj}
\cH_{\lambda_{0,j}}= \cH_{\epsilon_1,p_1}\oplus \cdots \oplus \cH_{\epsilon_1,p_{s_j}}.
\end{equation}

For each $l=1,\ldots,l_1$, we apply Theorem~\ref{t:abvn} to the commutative reduced von Neumann algebra 
${\cM_{\epsilon_1}}_{\cH_{\epsilon_1,l}}$ and get a unique decomposition
\begin{equation}
\cH_{\epsilon_1,l}= \cH_{\epsilon_1,l,\infty}\oplus \bigoplus_{n\geq 1}\cH_{\epsilon_1,l,n},
\end{equation} 
such that, for each $n\in \NN\cup\{\infty\}$,
\begin{itemize}
\item[(i)] $\cH_{\epsilon_1,l,n}$ is invariant under $\cM_{\epsilon_1}$ and, consequently, 
$\cM_{\epsilon_1,\cH_{\epsilon_1,l,n}}$, the compression of $\cM_{\epsilon_1}$ to $\cH_{\epsilon_1,l,n}$, 
is a commutative von Neumann algebra in $\cB(\cH_{\epsilon_1,l,n})$;
\item[(ii)] there exists a locally finite measure space 
$(X_{\epsilon_1,l,n},\Omega_{\epsilon_1,l,n},\mu_{\epsilon_1,l,n})$  and a unitary operator $U_{\epsilon_1,l,n}
\colon \cH_{\epsilon_1,l,n}\ra \ell^2_n\otimes L^2(X_{\epsilon_1,l,n},\mu_{\epsilon_1,l,n})$ such that, the mapping
\begin{equation*}
I_n\otimes L^\infty(X_{\epsilon_1,l,n},\mu_{\epsilon_1,l,n})\ni I_n\otimes \phi \mapsto U_{\epsilon_1,l,n}^* 
(I_n\otimes M_\phi) U_{\epsilon_1,l,n}\in \cM_{\epsilon_1,\cH_{\epsilon_1,l,n}}
\end{equation*}
is an isomorphism of von Neumann algebras.
\end{itemize}

For each $j\in \{1,\ldots,k_0\}$, correspondingly to the decomposition \eqref{e:haloj}, and for each 
$n\in\NN\cup\{\infty\}$, we define
\begin{equation}\label{e:xala}
X_{\lambda_{0,j},n}:= \bigsqcup_{s=1}^{s_j} X_{\epsilon_1,s,n},
\end{equation}
$\Omega_{\lambda_{0,j},n}$ is the $\sigma$-algebra generated by $\Omega_{\epsilon_n,s,n}$ for all
$s=1,\ldots,s_j$, and the measure $\mu_{\lambda_{0,j},n}$ is defined by 
\begin{equation*}\mu_{\lambda_{0,j},n}(A):=\sum_{s=1}^{s_j} \mu_{\epsilon_1,s,n}(A\cap X_{\epsilon_1,s,n}),
\quad A\in \Omega_{\lambda_{0,j},n}.\end{equation*}
Then, for each $n\in\NN\cup\{\infty\}$, the locally finite measure spaces 
$(X_{\lambda_{0,j},n},\Omega_{\lambda_{0,j},n},\mu_{\lambda_{0,j},n})$, for $j\in \{1,\ldots,k_0\}$, 
make a strictly inductive system. In view of \eqref{e:xala}, we define the unitary operators $U_{\lambda_{0,j},n}$
as the direct sum of the unitary operators $U_{\epsilon_1,s,n}$ for $s=1,\ldots,s_j$, for each $j=1,\ldots,k_0$. This 
concludes the proof of the first step of induction.

For the general step of induction, let us assume that, for a fixed $m\geq 1$ and for all $n\in\NN\cup\{\infty\}$, 
we have a strictly inductive system of measure spaces 
$(X_{\lambda,n},\Omega_{\lambda,n},\mu_{\lambda,n})_{\lambda \in\Lambda_{\epsilon_m}}$ as in Statement S. 
With notation as in \eqref{e:lal}, by 
assumption, $\Lambda_{\epsilon_{m+1}}$ is finite and contains $\Lambda_{\epsilon_m}$. We split
\begin{equation*}
\Lambda_{\epsilon_{m+1}}=\Lambda_{\epsilon_m}\sqcup (\Lambda_{\epsilon_{m+1}}\setminus \Lambda_{\epsilon_m}),
\end{equation*}
hence
\begin{equation*}
\Lambda_{\epsilon_m}=\{\lambda_{m,1},\ldots,\lambda_{m,k_m}\},\quad \Lambda_{\epsilon_{m+1}}\setminus 
\Lambda_{\epsilon_m}=\{\lambda_{m,k_m+1},\ldots,\lambda_{m,k_m+k_{m+1}}\},
\end{equation*}
for some natural numbers $k_m$ and $k_{m+1}$.
Then, the von Neumann algebra $\cM_{\epsilon_{m+1}}$ commutes with all  projections 
$P_{\lambda_{m,1},\epsilon_{m+1}}$ through $P_{\lambda_{m,k_m+k_{m+1}},\epsilon_{m+1}}$, more precisely
$P_{\lambda_{m,1},\epsilon_{m+1}}$
is the projection of $\cH_{\epsilon_{m+1}}$ onto $\cH_{\lambda_{m,1}}$, and so on. Recall that all these 
operators act in $\cH_{\epsilon_{m+1}}$ and mutually commute. Then, by means of the 
ranges of all nonzero projections which are finite products of 
projections $P_{\lambda_{m,i},\epsilon_{m+1}}$ and 
$I_{\cH_{\epsilon_{m+1}}}-P_{\lambda_{m,j},\epsilon_{m+1}}$, for all $i,j\in\{1,\ldots,k_m\}$,
the Hilbert space $\cH_{\epsilon_{m+1}}$ can be split into 
mutually orthogonal subspaces $\cH_{\epsilon_{m+1},1},\ldots,\cH_{\epsilon_{m+1},l_m},
\cH_{\epsilon_{m+1},l_m+1},\ldots,\cH_{\epsilon_{m+1},l_m+l_{m+1}}$, all of them invariant under 
$\cM_{\epsilon_{m+1}}$,
\begin{equation}\label{e:hememu}
\cH_{\epsilon_{m+1}}=\bigoplus_{j=1}^{l_m} \cH_{\epsilon_{m+1},j}\oplus \bigoplus_{j=1}^{l_{m+1}} 
\cH_{\epsilon_{m+1},l_m+j},
\end{equation}
such that,
\begin{equation}\label{e:hemem}
\cH_{\epsilon_m} = \bigoplus_{j=1}^{l_m} \cH_{\epsilon_{m+1},j}.
\end{equation}
For each $k=1,\ldots,k_m$, there exist indices $j_1,\ldots,j_{p_k}\in\{1,\ldots,l_m\}$, $p_k\leq l_m$, such that
\begin{equation}\label{e:celam}
\cH_{\lambda_{m,k}}=\bigoplus_{i=1}^{p_k} \cH_{\epsilon_{m+1},j_{i}},
\end{equation}
and, for each $k=k_m+1,\ldots,k_m+k_{m+1}$, there exist indices $j_1,\ldots,j_{p_k}\in \{1,\ldots,l_m+l_{m+1}\}$, 
$p_k\leq l_m+l_{m+1}$, such that \eqref{e:celam} holds.

At this step, the construction has to preserve everything was constructed at the previous steps and, because of 
that, with respect to the components $\cH_{\epsilon_{m+1},j}$, for $j=1,\ldots,l_m$, 
in the first orthogonal sum from the right hand side of
\eqref{e:hememu} and the components $\cH_{\epsilon_{m+1},l_m+j}$, for $j=1,\ldots,l_{m+1}$, in the latter 
orthogonal sum, we have to treat separately two cases: one in which we preserve the information obtained at the 
previous steps and another one in which we have to obtain new information.

\emph{First case:} $j\in \{1,\ldots,l_m\}$. 
By assumption, Statement S holds for $\lambda=\epsilon_m$ hence, there exists a unique decomposition
\begin{equation}\label{e:hepsiem}
\cH_{\epsilon_m}=\cH_{\epsilon_m,\infty}\oplus\bigoplus_{n\geq 1}\cH_{\epsilon_m,n}
\end{equation}  
and, for each $n\in\NN\cup\{\infty\}$, there exist the locally finite measure space 
$(X_{\epsilon_m,n},\Omega_{\epsilon_m,n},\mu_{\epsilon_m,n})$ and the unitary operator 
$U_{\epsilon_m,n}\colon \cH_{\epsilon_m,n}\ra \ell^2_n\otimes 
L^2(X_{\epsilon_m,n},\mu_{\epsilon_m,n})$ such that \eqref{e:phie} holds for $\lambda=\epsilon_m$.
Then, in view of \eqref{e:hemem} and the fact that all the spaces there involved are invariant under 
$\cM_{\epsilon_{m}}$, 
by considering the projection of $\cH_{\epsilon_{m+1}}$ onto $\cH_{\epsilon_{m+1},j}$,
for each $n\in\NN\cup\{\infty\}$ we can get a locally finite measure space 
$(X_{\epsilon_{m+1},j,n},\Omega_{\epsilon_{m+1},j,n},\mu_{\epsilon_{m+1},j,n})$ which is obtained by restriction 
from the locally finite measure space $(X_{\epsilon_m,n},\Omega_{\epsilon_m,n},\mu_{\epsilon_m,n})$, and a 
unitary operator $U_{\epsilon_{m+1},j,n}\colon \cH_{\epsilon_{m+1},j,n}\ra \ell^2_n\otimes 
L^2(X_{\epsilon_{m+1},j,n},\mu_{\epsilon_{m+1},j,n})$, where we have the decomposition 
\begin{equation}
\cH_{\epsilon_{m+1},j}=\cH_{\epsilon_{m+1},j,\infty}\oplus\bigoplus_{n=1}^\infty \cH_{\epsilon_{m+1},j,n},
\end{equation}
which is obtained from \eqref{e:hepsiem}, such that the unitary operator $U_{\epsilon_{m+1},j,n}$ implements 
the spatial isomorphism of the reduced von Neumann algebra $\cM_{\epsilon_{m+1},\cH_{\epsilon_{m+1},j,n}}$ 
with the $n$-times amplification von Neumann algebra 
$I_n\otimes L^\infty(X_{\epsilon_{m+1},j,n},\mu_{\epsilon_{m+1},j,n})$.

\emph{Second case:} $j\in \{l_m+1,\ldots,l_{m}+l_{m+1}\}$. In this case we have to provide new information and 
add it in an appropriate way to the information obtained in the previous steps.
For this we apply Theorem~\ref{t:abvn} and get the unique decomposition
\begin{equation}
\cH_{\epsilon_{m+1},j}=\cH_{\epsilon_{m+1},j,\infty}\oplus\bigoplus_{n=1}^\infty \cH_{\epsilon_{m+1},j,n},
\end{equation}
with all these spaces invariant under $\cM_{\epsilon_{m+1}}$, and, for each $n\in\NN\cup\{\infty\}$,
we can get a locally finite measure space 
$(X_{\epsilon_{m+1},j,n},\Omega_{\epsilon_{m+1},j,n},\mu_{\epsilon_{m+1},j,n})$
and a 
unitary operator $U_{\epsilon_{m+1},j,n}\colon \cH_{\epsilon_{m+1},j,n}\ra \ell^2_n\otimes 
L^2(X_{\epsilon_{m+1},j,n},\mu_{\epsilon_{m+1},j,n})$ which implements 
the spatial isomorphism of the reduced von Neumann algebra $\cM_{\epsilon_{m+1},\cH_{\epsilon_{m+1},j,n}}$ 
with the $n$-times amplification von Neumann algebra 
$I_n\otimes L^\infty(X_{\epsilon_{m+1},j,n},\mu_{\epsilon_{m+1},j,n})$.

Then, let $\lambda\in \Lambda_{\epsilon_{m+1}}\setminus \Lambda_{\epsilon_m}$, hence 
$\lambda=\lambda_{m,k_m+k}$ 
for some $k\in \{1,\ldots,k_{m+1}\}$ and there exist indices $j_1,\ldots,j_{p_k}\in \{1,\ldots,l_m+l_{m+1}\}$, 
$p_k\leq l_m+l_{m+1}$, such that \eqref{e:celam} holds. For each $n\in \NN\cup\{\infty\}$ 
we define  
\begin{equation}\label{e:xalam}
X_{\lambda_{m,k_m+k,n}} = \bigsqcup_{i=1}^{p_k} X_{\epsilon_{m+1},i,n}.
\end{equation}
Let $\Omega_{\lambda_{m,k_m+k,n}}$ be the $\sigma$-algebra generated by 
$\Omega_{\epsilon_{m+1},i,n}$ for $i=1,\ldots,p_k$, and define the measure 
$\mu_{\epsilon_{m+1},k_m+k,n}\colon\Omega_{\epsilon_{m+1},k_m+k,n}\ra [0,+\infty]$ by
\begin{equation}
\mu_{\epsilon_{m+1},k_m+k,n}(A):= \sum_{i=1}^{p_k} \mu_{\epsilon_{m+1},i,n}(A\cap X_{\epsilon_{m+1},p_i,n}),\quad A\in \Omega_{\epsilon_{m+1},k_m+k,n}.
\end{equation}
The unitary operators $U_{\lambda_{m,k_m+k,n}}$ are defined, in view of  \eqref{e:xalam}, as the direct sum of
operators $U_{\epsilon_{m+1},i,n}$ for $i=1,\ldots,p_k$.
Putting all these things together, it is easy 
to see that Statement S holds for $\epsilon_{m+1}$ and the general induction step is 
proven, which concludes the proof of Statement S for all $m\in\NN$.

In view of the Principle of Mathematical Induction and the technical condition (c2), Statement S proves items (a), 
(b), (c), and (d). Then, in view of Proposition~\ref{p:dirsum} and Proposition~\ref{p:tensorprod}, with the identification of the Hilbert spaces $\ell^2_n\otimes L^2(X_\lambda,n,\mu_{\lambda,n})$ and $L^2_{\ell^2_n}(X_\lambda,\mu_{\lambda,n})$ as in Remark~\ref{r:tensorprod},
we can define the representing locally Hilbert space
\begin{equation}\label{e:elal}
\varinjlim_{\lambda\in\Lambda}\biggl(\bigl(\ell^2_\infty\otimes L^2_\lambda(X_{\infty},\mu_{\infty})\bigr)\oplus
\bigoplus_{n\geq 1}\bigl(\ell^2_n\otimes L^2_\lambda(X_{n},\mu_{n})\bigr)\biggr)
\end{equation}
and, for any $\lambda\in\Lambda$ we consider the unitary operator $U_\lambda$ defined as in \eqref{e:ulam}.
By Proposition~\ref{p:lbo2}, we observe that the net $(U_\lambda)_{\lambda\in\Lambda}$ of unitary operators 
gives rise to the locally unitary operator 
\begin{equation}\label{e:uvar}
U:=\varprojlim_{\lambda\in\Lambda} U_\lambda\colon \cH=\varinjlim_{\lambda\in\Lambda} \cH_\lambda
\ra \varinjlim_{\lambda\in\Lambda} \bigl(\ell^2_\infty\otimes L^2_\lambda(X_{\infty},\mu_{\infty})\bigr)\oplus
\bigoplus_{n\geq 1}\bigl(\ell^2_n\otimes L^2_\lambda(X_{n},\mu_{n})\bigr).
\end{equation}
In view of the properties (a) through (c), we can easily see that the locally unitary operator $U$ defined at 
\eqref{e:uvar} implements the $*$-isomorphism between $\cM$ and the locally von Neumann algebra described 
at \eqref{e:elal}, as in \eqref{e:uulah}.
\end{proof}

The above theorem can be made more general by removing the assumption that the locally Hilbert space is 
representing, in which case, instead of the spatial isomorphism as in  \eqref{e:uulah}
we only obtain a $*$-isomorphism of locally von Neumann algebras that might not 
be spatial, that is, implemented by a locally unitary operator. 

\begin{corollary}
Assume that the directed set $\Lambda$ is sequentially finite, consider a locally Hilbert space
$\cH=\varinjlim_{\lambda\in\Lambda}\cH_\lambda$, and let $\cM=\varprojlim_{\lambda\in\Lambda} \cM_\lambda$ 
be an Abelian locally von Neumann algebra in $\Lloc(\cH)$. Then, 
for each $n\in\NN\cup\{\infty\}$, there exists a strictly inductive system of locally finite 
measure spaces $((X_{\lambda,n},\Omega_{\lambda,n},\mu_{\lambda,n}))_{\lambda\in\Lambda}$ such that
the locally von Neumann algebra $\cM$ is $*$-isomorphic to the locally von Neumann algebra
$\varprojlim_{\lambda\in\Lambda}\bigl((I_\infty\otimes L^\infty(X_{\lambda,\infty},\mu_{\lambda,\infty}))\oplus
\bigoplus_{n\geq 1} (I_n\otimes L^\infty(X_{\lambda,n},\mu_{\lambda,n}))\bigr)$.
\end{corollary}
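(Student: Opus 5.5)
The plan is to reduce the corollary to Theorem~\ref{t:avn1} by passing from $\cH$ to an auxiliary \emph{representing} locally Hilbert space on which $\cM$ acts faithfully with the same projective structure. We drop spatiality and keep only the $*$-isomorphism. By Theorem~\ref{t:lvna} and the discussion preceding it, $\cM=\varprojlim_{\lambda\in\Lambda}\cM_\lambda$ is a projective limit of Abelian von Neumann algebras $\cM_\lambda\subseteq\cB(\cH_\lambda)$, with weakly continuous $*$-epimorphisms $\pi_{\lambda,\nu}\colon\cM_\nu\ra\cM_\lambda$ for $\lambda\leq\nu$. The key point is that the functional model depends only on this projective system of Abelian $W^*$-algebras, not on how the $\cH_\lambda$ sit inside each other.

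For each $\lambda\leq\nu$, the kernel of $\pi_{\lambda,\nu}$ is a weakly closed ideal of $\cM_\nu$. It is therefore of the form $\cM_\nu(I-z_{\lambda,\nu})$ for a central projection $z_{\lambda,\nu}\in\cM_\nu$, so that $\cM_\lambda\cong\cM_\nu z_{\lambda,\nu}$. The second step replaces each $\cM_\lambda$ by a faithful normal representation on a Hilbert space $\cK_\lambda$ built so that $(\cK_\lambda)_{\lambda\in\Lambda}$ is a representing strictly inductive system. Concretely, I would use the sequential finiteness of $\Lambda$ and induct along $(\epsilon_m)$ exactly as in the proof of Theorem~\ref{t:avn1}, working with the finitely many mutually commuting central projections $z_{\lambda,\epsilon_{m}}$, $\lambda\in\Lambda_{\epsilon_m}$, in the Abelian algebra $\cM_{\epsilon_m}$. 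Their products and complements split $\cM_{\epsilon_m}$ into finitely many direct summands. One then realises $\cM_{\epsilon_{m+1}}$ on a space $\cK_{\epsilon_{m+1}}$ that contains $\cK_{\epsilon_m}$ as the range of the image of $z_{\epsilon_m,\epsilon_{m+1}}$. A convenient concrete choice is to take $\cK_\lambda$ to be the GNS/standard form $L^2$-space of $\cM_\lambda$ with respect to a faithful normal semifinite weight, compatible via the $z$'s. In this construction the inclusions $\cK_\lambda\subseteq\cK_\nu$ are ranges of central projections of an Abelian algebra, hence the projections commute, and the system is representing.

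Having obtained a representing locally Hilbert space $\cK=\varinjlim\cK_\lambda$ and an Abelian locally von Neumann algebra $\widehat\cM\subseteq\Lloc(\cK)$ which is $*$-isomorphic to $\cM$ as a projective limit, the third step is to apply Theorem~\ref{t:avn1} to $\widehat\cM$. This yields the strictly inductive systems $((X_{\lambda,n},\Omega_{\lambda,n},\mu_{\lambda,n}))_{\lambda\in\Lambda}$ and a spatial isomorphism of $\widehat\cM$ with the stated projective limit of amplified $L^\infty$ algebras. Composing it with $\cM\cong\widehat\cM$ gives the claim. It also has to be checked that this model is a locally von Neumann algebra, which holds since it is a projective limit of von Neumann algebras (Theorem~\ref{t:lvna}, or directly as in Theorem~\ref{t:avn1}).

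The main obstacle is the second step: arranging the new Hilbert spaces $\cK_\lambda$ to be literally included in one another, not merely embedded, while keeping the compression maps equal to the given $\pi_{\lambda,\nu}$. I would handle this by building the $\cK_\lambda$ concretely as spaces of functions on a growing disjoint union of measure spaces, in the spirit of Subsection~\ref{ss:sisms} and of the special direct sums of Subsection~\ref{ss:ods}, so that inclusions are genuine. A simpler alternative is to first apply Theorem~\ref{t:abvn} to each central summand and then glue the pieces along $\epsilon_m$ exactly as in Theorem~\ref{t:avn1}, never using the original inclusions $\cH_\lambda\subseteq\cH_\nu$. This is why spatiality is lost.
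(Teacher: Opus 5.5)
Your proposal is correct in outline but takes a different route from the paper. The paper gives no separate argument for this corollary: it only remarks that the proof of Theorem~\ref{t:avn1} goes through without the representing hypothesis, at the cost of spatiality. Your ``simpler alternative'' is essentially that route.

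Your main route instead proves a realisation lemma: the projective system $(\cM_\lambda,\pi_{\lambda,\nu})$ can be carried faithfully by a representing system $(\cK_\lambda)_{\lambda\in\Lambda}$. You then use Theorem~\ref{t:avn1} as a black box. This gains something real. For non-representing $\cH$ the projections $P_{\lambda,\epsilon}$ used in that proof need not commute, and $P_\lambda$ need not even be locally bounded, so (LVN3) is not meaningful. Your kernel projections $z_{\lambda,\nu}$ are exactly the commuting substitutes, and the construction makes transparent why spatiality is lost.

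Two verifications should be made explicit.

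First, you need $\pi_{\mu,\epsilon}(z_{\lambda,\epsilon})=z_{\lambda,\mu}$ for $\lambda\leq\mu\leq\epsilon$. This follows by applying the surjection $\pi_{\mu,\epsilon}$ to $\ker\pi_{\lambda,\epsilon}=\pi_{\mu,\epsilon}^{-1}(\ker\pi_{\lambda,\mu})$. You also need $z_{\lambda,\epsilon}\leq z_{\nu,\epsilon}$ for $\lambda\leq\nu\leq\epsilon$. Together these ensure that the spaces built at step $m$ reappear unchanged as ranges inside $\cK_{\epsilon_{m+1}}$, and that the inclusions are coherent.

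Second, $\widehat\cM$ must satisfy Definition~\ref{d:lvn}, in particular (LVN3). Here $P^{\cK}_\lambda$ is the image of the element of $\cM$ determined by the coherent family $\bigl(\pi_{\nu,\epsilon}(z_{\lambda,\epsilon})\bigr)_{\nu\in\Lambda}$, with $\epsilon\geq\lambda,\nu$. This is not cosmetic. The gluing in the proof of Theorem~\ref{t:avn1} takes disjoint unions of the measure spaces attached to the pieces cut out by the $P_{\lambda,\epsilon}$, and that is valid only because these projections lie in $\cM_\epsilon$.

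Do not invoke Theorem~\ref{t:lvna} for $\cH$, since it is stated for representing spaces. The hypothesis $\cM=\varprojlim\cM_\lambda$, with compression maps, already gives the projective system of Abelian von Neumann algebras with normal epimorphisms that you need. The literal-inclusion issue is harmless and is handled by transport of structure.

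Finally, since only a $*$-isomorphism is claimed, you could skip Theorem~\ref{t:avn1} entirely. The same induction, realising $\cM_{\epsilon_m}\cong L^\infty(Y_{\epsilon_m})$ with $Y_{\epsilon_m}\subseteq Y_{\epsilon_{m+1}}$, already yields the model with $X_{\lambda,1}=Y_\lambda$ and all other $X_{\lambda,n}$ empty.
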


Another byproduct of Theorem~\ref{t:avn1} is the representation 
of Abelian locally von Neumann algebras in terms of $L^\infty$ type algebras, without spectral multiplicity. The cost is, of course,  
more complex measure spaces.

\begin{corollary}\label{c:avn1}
Assume that the directed set $\Lambda$ is sequentially finite,
consider a representing locally Hilbert space
$\cH=\varinjlim_{\lambda\in\Lambda}\cH_\lambda$, and let $\cM=\varprojlim_{\lambda\in\Lambda} \cM_\lambda$ 
be an Abelian locally von Neumann algebra in $\Lloc(\cH)$. Then,
there exists a strictly inductive system of locally finite 
measure spaces $((X_{\lambda},\Omega_{\lambda},\mu_{\lambda}))_{\lambda\in\Lambda}$, 
and there exists a locally unitary operator
\begin{equation*}
U\colon \cH\ra \varinjlim_{\lambda\in\Lambda}L^2_\lambda(X,\mu),
\end{equation*}
such that
\begin{equation*}
U^*\cM U=\varprojlim_{\lambda\in\Lambda} L^\infty(X_{\lambda},\mu_{\lambda}).
\end{equation*}
\end{corollary}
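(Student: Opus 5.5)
The plan is to derive the statement from Theorem~\ref{t:avn1} by absorbing the multiplicity index $n$ into the underlying measure space. For each $n\in\NN\cup\{\infty\}$ let $F_n=\{1,\ldots,n\}$ (with $F_\infty=\NN$) carry counting measure $c_n$. For each $\lambda\in\Lambda$ set
\begin{equation*}
X_\lambda:=\bigsqcup_{n\in\NN\cup\{\infty\}} F_n\times X_{\lambda,n},\qquad \mu_\lambda:=\bigoplus_{n} c_n\times\mu_{\lambda,n},
\end{equation*}
with $\Omega_\lambda$ the $\sigma$-algebra generated by the sets $\{k\}\times A$, where $A\in\Omega_{\lambda,n}$. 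The first checks are that this is a strictly inductive system of locally finite measure spaces and that its inductive limit $(X,\Omega,\mu)$ is $\bigsqcup_n F_n\times X_n$. Both follow because $(X_{\lambda,n},\Omega_{\lambda,n},\mu_{\lambda,n})_{\lambda}$ is strictly inductive for each fixed $n$ (item (a) of Theorem~\ref{t:avn1}), and $F_n$ and the disjoint-union index do not depend on $\lambda$.

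Next I would build the unitaries at each level. Fix an orthonormal basis $(h_k)$ of $\ell^2_n$. Remark~\ref{r:tensorprod} identifies $\ell^2_n\otimes L^2_\lambda(X_n,\mu_n)$ with $L^2_{\ell^2_n,\lambda}(X_n,\mu_n)$. The map sending $f$ to the function $(k,x)\mapsto\langle f(x),h_k\rangle$ is then a unitary $V_{\lambda,n}$ onto $L^2_\lambda(F_n\times X_n,c_n\times\mu_n)$; it is unitary by Parseval together with Fubini for counting measure. Because $V_{\lambda,n}$ is defined by a $\lambda$-independent formula on functions living on all of $X_n$, one has $V_{\nu,n}|_{\cdots_\lambda}=V_{\lambda,n}$ for $\lambda\le\nu$, and $V_{\nu,n}$ commutes with the projections $P_{\lambda,\nu}$, which are multiplications by $\chi_{X_{\lambda,n}}$. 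So by Proposition~\ref{p:lbo2} the $V_{\lambda,n}$ form a locally unitary operator. Summing over $n$, using Proposition~\ref{p:dirsum} and the uniqueness of special orthogonal direct sums (Proposition~\ref{p:resal}), gives locally unitary maps onto $L^2_\lambda(X,\mu)$. I would then compose with $U$ from \eqref{e:uvaram} to obtain the required locally unitary operator $U\colon\cH\ra\varinjlim_\lambda L^2_\lambda(X,\mu)$.

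Under this conjugation, $I_n\otimes M_\phi$ with $\phi\in L^\infty(X_{\lambda,n},\mu_{\lambda,n})$ becomes multiplication by $(k,x)\mapsto\phi(x)$ on $F_n\times X_{\lambda,n}$. Combined with \eqref{e:uulah}, this identifies $U^*\cM U$ level by level with the multiplication algebras on $X_\lambda$ induced by $\bigoplus_n L^\infty(X_{\lambda,n},\mu_{\lambda,n})$. Coherence in $\lambda$ is inherited from Theorem~\ref{t:avn1}(e), and passing to the projective limit is as in Remark~\ref{r:linlvn}.

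The main obstacle is the final equality with the full algebra $\varprojlim_\lambda L^\infty(X_\lambda,\mu_\lambda)$. What the construction produces is exactly the multiplications by functions on $X_\lambda$ that are constant along each fibre $F_n\times\{x\}$. I do not see a way to close this gap with the present construction, and I believe the corollary as worded requires more than Theorem~\ref{t:avn1} supplies. My reason is that the full $L^\infty(X_\lambda,\mu_\lambda)$ is maximal abelian on $L^2_\lambda(X,\mu)$, while $\cM_\lambda$ need not be. If $\cH_{\lambda,n}\neq\{0\}$ for some $n\ge2$, the fibre-constant algebra is strictly smaller than $L^\infty(X_\lambda,\mu_\lambda)$. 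Moreover, no choice of a coarser $\sigma$-algebra can shrink $L^\infty$ without also shrinking $L^2_\lambda$ and losing unitarity.

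So I would first try to show that the multiplicity-one summands alone suffice, that is, that $\cH_{\lambda,n}=\{0\}$ for $n\ge2$ under the hypotheses. I expect this to fail for a general Abelian $\cM$, so the spatial equality can only be obtained in the maximal abelian case $\cM=\cM'$. In the general case the conclusion that can honestly be proved is the identification of $U^*\cM U$ with the fibre-constant subalgebra described above, and this restriction should be stated explicitly.
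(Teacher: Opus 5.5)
Your diagnosis is correct: the defect lies in the statement, not in your argument. The paper gives no proof of Corollary~\ref{c:avn1}. It only announces it as a byproduct of Theorem~\ref{t:avn1} ``without spectral multiplicity'', at the cost of ``more complex measure spaces''. The natural way to realise that remark is exactly your trade of $\ell^2_n\otimes L^2_\lambda(X_n,\mu_n)$ for $L^2_\lambda(F_n\times X_n,c_n\times\mu_n)$. Your coherence checks are fine. However, this transport sends $I_n\otimes L^\infty(X_{\lambda,n},\mu_{\lambda,n})$ to multiplications by functions that are constant on the fibres $F_n\times\{x\}$, not to all of $L^\infty(F_n\times X_{\lambda,n})$. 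So the passage from \eqref{e:uulah} to the corollary breaks exactly where you say.

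You can replace ``I expect this to fail'' by a proof that the statement is false. The key fact is that, for any measure space $(Y,\Sigma,\nu)$, the operators $M_\phi$ with $\phi\in L^\infty(Y,\nu)$ generate a maximal abelian von Neumann algebra on $L^2(Y,\nu)$. To see this, take a maximal family $\{E_i\}$ of pairwise almost disjoint sets of finite positive measure; then $L^2(Y,\nu)=\bigoplus_i L^2(E_i,\nu)$, since each $f\in L^2$ has $\sigma$-finite support. An operator commuting with all $M_\phi$ commutes with every $M_{\chi_{E_i}}$. It therefore acts on each summand as an element of $L^\infty(E_i,\nu)$, which is maximal abelian there because $\nu|_{E_i}$ is finite. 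Hence the commutant is abelian and equals the bicommutant.

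Now $U_\lambda\cM_\lambda U_\lambda^*$ is a von Neumann algebra. So the conclusion of the corollary at a single level $\lambda$ forces $\cM_\lambda$ to be maximal abelian in $\cB(\cH_\lambda)$, i.e.\ $\cH_{\lambda,n}=\{0\}$ for all $n\neq 1$ in Theorem~\ref{t:avn1}. A concrete counterexample is $\Lambda=\{\lambda_0\}$ (trivially sequentially finite), $\cH=\CC^2$, $\cM=\CC I$: it satisfies all hypotheses and violates the conclusion. For a nontrivial index set, take $\Lambda=\NN$, $\cH_m=\CC^2\otimes\CC^m$, and $\cM_m=I_2\otimes\ell^\infty_m$ (with $\ell^\infty_m$ acting diagonally). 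This algebra contains every $P_m$ and has uniform multiplicity $2$.

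Three things can honestly be salvaged.
\begin{enumerate}
\item Your spatial version with the fibre-constant subalgebra, which is a mere relabelling of \eqref{e:uulah}.
\item A non-spatial version with $X_\lambda:=\bigsqcup_n X_{\lambda,n}$, \emph{without} the factors $F_n$. Here $I_n\otimes\phi\mapsto\phi$ gives $*$-isomorphisms $\cM_\lambda\cong L^\infty(X_\lambda,\mu_\lambda)$ compatible with the compressions. Hence $\cM\cong\varprojlim_\lambda L^\infty(X_\lambda,\mu_\lambda)$ as locally von Neumann algebras, though not via a locally unitary.
\item The literal statement, which holds exactly when $\cM=\cM'$. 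For representing $\cH$ this is equivalent to every $\cM_\lambda$ being maximal abelian: an $S\in\cM_\lambda'$ extends to $SP_\lambda\in\cM'$, using $P_\nu\in\cM$ and $P_\lambda P_\nu=P_\nu P_\lambda$. In that case Theorem~\ref{t:avn1} gives the statement directly with $X_\lambda=X_{\lambda,1}$.
\end{enumerate}
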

\subsection{Direct Integral Representations of Locally von Neumann Algebras}\label{ss:dirlvna}
From Theorem~\ref{t:avn1} 
we can use the ideas and techniques as in Theorem~\ref{t:did} and, assuming in addition that all the 
Hilbert spaces $\cH_\lambda$ are separable, we can put this into a direct integral representation of an Abelian 
locally von Neumann algebra. 

Let $\big((X_{\lambda}, \Omega_{\lambda}, \mu_{\lambda})\big)_{\lambda \in \Lambda}$ be a strictly inductive system of 
measure spaces and let $(X, \Omega, \mu)$ be its inductive limit. First note that, if $E \in \Omega_{\lambda}$ then, clearly
\begin{equation*} \Omega_{\lambda, E}:= \left\{ A\cap E:\; A \in \Omega_{\lambda}\right\}
\end{equation*} 
defines a $\sigma$-algebra of subsets of $E$ and the restriction $\mu_{\lambda} \big|_E$ defines a measure on $E.$  If 
$\mathcal{G}$ is a Hilbert space and $E \in \Omega_\Lambda$, then we consider the vector space
\begin{align}\label{e:lefexet}
\cL^{2}_{\mathcal{G}, E}(X, \mu) & :=
\big\{ f \colon X \to \mathcal{G} \mid \supp(f) \subseteq E,\ f\big|_{E} \text{ is measurable w.r.t. } \Omega_{\lambda, E},\\
& \hspace{7em}\int\limits_{E} \|f(x)\|_{\mathcal{G}}^{2}\; \de\mu_{\lambda}(x) < \infty \big\},\nonumber
\end{align}
and denote by $L^{2}_{\mathcal{G}, E}(X, \mu)$ the Hilbert space obtained from $\cL^{2}_{\mathcal{G}, E}(X, \mu)$
by identification $\mu_{\lambda}$-a.e. We will use this conventional notation at item (iv) of the following theorem.

\begin{theorem}\label{t:avn2} 
Let $\big( \Lambda, \leq \big)$ be a sequentially finite directed set. Consider a representing locally Hilbert space 
$\mathcal{H} := \varinjlim\limits_{\lambda \in \Lambda} \mathcal{H}_{\lambda}$, such that $\mathcal{H}_\lambda$ is separable for 
each $\lambda \in \Lambda$, and let $\mathcal{M} = \varprojlim\limits_{\lambda \in \Lambda} \mathcal{M}_{\lambda}$ be an 
Abelian locally von Neumann algebra in $\mathcal{B}_{\mathrm{loc}}(\mathcal{H})$. Then, for every $\lambda \in \Lambda$, there 
exist a measure space $\big ( X_\lambda, \Omega_\lambda, \mu_\lambda \big)$ and a family of Hilbert spaces 
$\bG_\lambda = \{ \mathcal{G}_{x, \lambda} \}_{x \in X_\lambda}$ with the following properties.
\begin{enumerate}
\item[(i)] The net  $\big ( \big ( X_\lambda, \Omega_\lambda, \mu_\lambda \big) \big )_{\lambda \in \Lambda}$ is a strictly 
inductive system of measure spaces such that for each $\lambda \in \Lambda$, $\big ( X_\lambda, \Omega_\lambda \big ) $ is a 
standard Borel space.
\item[(ii)] For each $x \in X$, the net $\big ( \mathcal{G}_{x, \lambda}  \big )_{\lambda \in \Lambda}$ is a strictly inductive system 
of separable Hilbert spaces and let $\mathcal{G}_x := \varinjlim\limits_{\lambda \in \Lambda} \mathcal{G}_{x, \lambda}$ be the 
associated locally Hilbert space. 
\item[(iii)] $\big ( \big ( X_\lambda \ast \bG_\lambda, \pi_\lambda  \big )  \big )_{\lambda \in \Lambda}$ is a net of standard Borel 
bundles of Hilbert spaces.
\item[(iv)] If $\big ( X, \Omega, \mu \big) $ is the inductive limit of the strictly inductive system 
$\big ( \big ( X_\lambda, \Omega_\lambda, \mu_\lambda \big) \big )_{\lambda \in \Lambda}$ of measure spaces, then 
$L^2 \big ( X_{\lambda} \ast \bG_{\lambda}, \mu_{\lambda} \big ) \cong L^{2}_{\ell^2_\infty, X_{\lambda, \infty}}\big ( X, \mu \big )  
\oplus \bigoplus\limits_{n \geq 1} L^{2}_{\ell^2_n, X_{\lambda, n}}\big ( X, \mu \big )$ where, 
for each $n \in \mathbb{N} \cup \{ \infty \}$, $L^{2}_{\ell^2_n, X_{\lambda, n}}\big ( X, \mu \big )$ consists of all functions 
$f \colon X \rightarrow \ell^2_n$, such that 
\begin{equation*}
\supp(f) \subseteq X_{\lambda, n},  \;  f\big|_{X_{\lambda, n}}  \text{ is measurable w.r.t. } \Omega_{\lambda, X_{\lambda, n}},\ 
\int\limits_{X_{\lambda, n}} \|f(x)\|_{\ell^2_n}^{2}\; d\mu_{\lambda}(x) < \infty.
\end{equation*}  
\item[(v)] The net
\begin{equation*}
\big( L^{2}_{\ell^2_\infty, X_{\lambda, \infty}}\big ( X, \mu \big )  \oplus \bigoplus\limits_{n \geq 1} L^{2}_{\ell^2_n, X_{\lambda, n}}\big ( X, \mu \big )  \big)_{\lambda \in \Lambda}
\end{equation*}
forms a  strictly inductive system of Hilbert spaces. Consequently, there exists a direct integral of locally Hilbert spaces
\begin{equation*}
L^2_{\mathrm{loc}}(X \ast \bG, \mu) = \displaystyle \int_X^{\oplus,\mathrm{loc}} \mathcal{G}_x \; \mathrm{d} \mu(x) 
:= \varinjlim\limits_{\lambda \in \Lambda} L^{2}_{\ell^2_\infty, X_{\lambda, \infty}}\big ( X, \mu \big )  
\oplus \bigoplus\limits_{n \geq 1} L^{2}_{\ell^2_n, X_{\lambda, n}}\big ( X, \mu \big ),
\end{equation*}
where, $\bG := \big ( \mathcal{G}_x  \big )_{x \in X}$ is a net of locally Hilbert spaces.
\item[(vi)] There exists a locally unitary operator $U \colon \mathcal{H} \rightarrow L^2_{\mathrm{loc}}(X \ast \bG, \mu)$, such that 
\begin{equation*}
\mathcal{M} \ni Z \mapsto UZU^\ast \in  \mathcal{B}^{\mathrm{diag}}_{\mathrm{loc}} \big ( L^2_{\mathrm{loc}}(X \ast \bG, \mu)\big )
\end{equation*}
is a $\ast$-isomorphism of (Abelian) locally von Neumann algebras.
\end{enumerate}
\end{theorem}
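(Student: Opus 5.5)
The plan is to start from Theorem~\ref{t:avn1}, now using that each $\cH_\lambda$ is separable, and to repackage its output as a direct integral in the way Theorem~\ref{t:did} is derived from Theorem~\ref{t:abvn}. Theorem~\ref{t:avn1} gives, for each $n\in\NN\cup\{\infty\}$, a strictly inductive system $((X_{\lambda,n},\Omega_{\lambda,n},\mu_{\lambda,n}))_{\lambda\in\Lambda}$. It also gives decompositions \eqref{e:hala} and unitaries $U_{\lambda,n}$ forming the locally unitary $U$ of \eqref{e:uvaram}. Since each $\cH_{\lambda,n}\subseteq\cH_\lambda$ is separable, the last assertion of Theorem~\ref{t:abvn} lets us choose the measure spaces standard at every application in the induction of Statement~S. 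This holds both in the ``second case'', where new pieces appear, and in the ``first case'', where pieces are restrictions of previously constructed standard spaces to measurable subsets; these are again standard Borel.

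First I will put $X_\lambda:=X_{\lambda,\infty}\sqcup\bigsqcup_{n\ge1}X_{\lambda,n}$ with the disjoint-union $\sigma$-algebra and measure $\mu_\lambda$. A countable disjoint union of standard Borel spaces is standard. Strict inductivity of $(X_\lambda,\Omega_\lambda,\mu_\lambda)_\lambda$ follows componentwise from strict inductivity in each $n$, which gives (i). For $x\in X_{\lambda,n}$ I set $\cG_{x,\lambda}:=\ell^2_n$. Then for $x\in X$ the spaces $\cG_{x,\lambda}$ (defined for $\lambda$ with $x\in X_\lambda$) are all equal to the same $\ell^2_n$, because $x$ stays in the $n$-th component. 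Hence $(\cG_{x,\lambda})_\lambda$ is trivially strictly inductive and $\cG_x=\ell^2_n$, which gives (ii). For (iii), on each component $X_{\lambda,n}$ the bundle is trivial, and Remark~\ref{r:id} together with Proposition~\ref{p:bm} provides a standard Borel structure with a fundamental sequence. Gluing countably many such sequences (extended by zero off their component) gives a fundamental sequence on $X_\lambda\ast\bG_\lambda$. These structures are compatible under the inclusions $X_\lambda\subseteq X_\nu$ because of the uniqueness in Proposition~\ref{p:bm}.

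Item (iv) is then Remark~\ref{r:id} and Remark~\ref{r:tensorprod} applied componentwise, together with the identification of $\ell^2_n\otimes L^2_\lambda(X_n,\mu_n)$ with $L^2_{\ell^2_n,X_{\lambda,n}}(X,\mu)$. Item (v) follows from Proposition~\ref{p:tensorprod} and Proposition~\ref{p:dirsum}: each $n$-summand is strictly inductive in $\lambda$, so the special orthogonal sum is too. This verifies condition (iii) of Definition~\ref{d:bblhs}, so $L^2_{\mathrm{loc}}(X\ast\bG,\mu)$ is a direct integral of locally Hilbert spaces. For (vi) I compose the locally unitary $U$ from \eqref{e:uvaram} with these identifications. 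By \eqref{e:uulah}, $U\cM U^*$ equals the projective limit of $\bigoplus_n I_n\otimes L^\infty(X_{\lambda,n},\mu_{\lambda,n})$. By Remark~\ref{r:ampdiag}, at each level $\lambda$ this is exactly $\cB_{\mathrm{diag}}(L^2_\lambda(X\ast\bG,\mu))$, the multiplication operators by some $\Phi_\lambda\in L^\infty(X_\lambda,\mu_\lambda)$. Taking projective limits and using Definition~\ref{d:locdec}(2) gives $U\cM U^*=\mathcal{B}^{\mathrm{diag}}_{\mathrm{loc}}(L^2_{\mathrm{loc}}(X\ast\bG,\mu))$. Surjectivity uses Theorem~\ref{t:locdecdeg}(3) together with the fact that a coherent family $(\Phi_\lambda)$ defines an element of $L^\infty_{\mathrm{loc}}$, which corresponds to an element of $\cM$ since $\cM=\varprojlim\cM_\lambda$.

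The main obstacle I expect is the coherence of the Borel structures and of the identifications across $\lambda$. One must check that the standard Borel bundle structures $\Theta_\lambda$ form a strictly inductive system in the sense of (sim2), i.e.\ $\Theta_\lambda$ is the trace of $\Theta_\nu$. One must also check that the level-$\lambda$ unitaries from Remark~\ref{r:ampdiag} are restrictions of the level-$\nu$ ones. I would handle this by fixing, once and for all, the fundamental sequences $f_{p,m}$ on each newly created piece at the induction step where that piece first appears. Later levels then reuse them, and uniqueness in Proposition~\ref{p:bm} forces compatibility.
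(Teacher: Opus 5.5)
Your proposal is correct and follows essentially the same route as the paper. Both arguments apply Theorem~\ref{t:avn1} with standard measure spaces supplied by separability, take the disjoint union $X_\lambda=X_{\lambda,\infty}\sqcup\bigsqcup_{n}X_{\lambda,n}$ with fibres $\cG_{x,\lambda}=\ell^2_n$, use the levelwise identifications of Remarks~\ref{r:id}, \ref{r:tensorprod} and \ref{r:ampdiag}, and transport $\cM$ by the locally unitary $U$. The differences are minor: the paper checks strict inductivity in (v) by a direct measurability computation rather than citing Propositions~\ref{p:tensorprod} and \ref{p:dirsum}, and it does not spell out, as you do, the compatibility of the Borel structures $\Theta_\lambda$ across $\lambda$ or the surjectivity onto $\mathcal{B}^{\mathrm{diag}}_{\mathrm{loc}}(L^2_{\mathrm{loc}}(X\ast\bG,\mu))$.
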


\begin{proof} 
Under the present assumptions and in view of Theorem \ref{t:avn1}, for each 
$n \in \mathbb{N} \cup \{ \infty \}$, there exists a strictly inductive system 
$\big ( \big ( X_{\lambda, n}, \Omega_{\lambda, n}, \mu_{\lambda, n} \big) \big )_{\lambda \in \Lambda}$ of locally finite measure 
spaces. Let $( X_n, \Omega_n, \mu_n \big)$ denote its inductive limit. For each $\lambda \in \Lambda$, we get a decomposition 
of the Hilbert space $\mathcal{H}_\lambda$ as
\begin{equation} \label{e:dechal}
\mathcal{H}_\lambda = \mathcal{H}_{\lambda, \infty} \oplus \bigoplus\limits_{n \geq 1} \mathcal{H}_{\lambda, n}
\end{equation}
with the property that, for each $n \in \mathbb{N} \cup \{ \infty \}$, the net 
$\big ( \mathcal{H}_{\lambda, n}  \big )_{\lambda \in \Lambda}$ forms a strictly inductive system of Hilbert spaces. Also, for each 
$\lambda \in \Lambda$ and each 
$n \in \mathbb{N} \cup \{ \infty \}$, the Hilbert space $\mathcal{H}_{\lambda, n}$ is invariant under the 
von Neumann algebra $\mathcal{M}_\lambda$. Suppose $P_{\mathcal{H}_{\lambda, n}}$ denote the orthogonal projection of 
$\mathcal{H}_\lambda$ onto the Hilbert space $\mathcal{H}_{\lambda, n}$. Let 
$\mathcal{M}_{\lambda, \mathcal{H}_{\lambda, n}} := P_{\mathcal{H}_{\lambda, n}} 
\mathcal{M}_\lambda \big|_{{\mathcal{H}_{\lambda, n}}}$. Then, we obtain a unitary operator $U_{\lambda, n} \colon 
\mathcal{H}_{\lambda, n} \rightarrow \ell^2_n \otimes L^2_\lambda(X_n, \mu_n)$, such that
\begin{equation*}
U_{\lambda, n} \mathcal{M}_{\lambda, \mathcal{H}_{\lambda, n}} U^\ast_{\lambda, n} 
= I_n \otimes L^\infty(X_{\lambda, n}, \mu_{\lambda, n}).
\end{equation*}
The decomposition of the Hilbert space $\mathcal{H}_\lambda$ given at \eqref{e:dechal} is unique with 
the properties mentioned above. For a fixed $\lambda \in \Lambda$, using the unitary operators 
$U_{\lambda, n}$ for each $n \in \mathbb{N} \cup \{ \infty \}$, define a unitary operator
\begin{equation} \label{e:ulae}
U_\lambda := U_{\lambda, \infty} \oplus \bigoplus\limits_{n \geq 1} U_{\lambda, n} \colon 
\mathcal{H}_\lambda \rightarrow \big(\ell^2_\infty \otimes L^2_\lambda(X_\infty, \mu_\infty)\big) \oplus \bigoplus\limits_{n \geq 1} \big(\ell^2_n \otimes L^2_\lambda(X_n, \mu_n)\big).
\end{equation}
This yields a net of unitary operators $(U_\lambda)_{\lambda \in \Lambda}$ which defines a locally unitary operator 
\begin{equation}  \label{e:ulaes}
U = \varprojlim_{\lambda \in \Lambda} U_\lambda \colon \mathcal{H} = \varinjlim\limits_{\lambda \in \Lambda} 
\mathcal{H}_{\lambda} \rightarrow   \varinjlim\limits_{\lambda \in \Lambda} \big( (\ell^2_\infty \otimes L^2_\lambda(X_\infty, \mu_\infty) )
\oplus \bigoplus\limits_{n \geq 1} (\ell^2_n \otimes L^2_\lambda(X_n, \mu_n))\big),
\end{equation}
such that, 
\begin{equation}  \label{e:spiu}
U^\ast \mathcal{M} U = \varprojlim_{\lambda \in \Lambda} \Big(\big( I_\infty \otimes L^\infty(X_{\lambda, \infty}, \mu_{\lambda, \infty}) \big)\oplus \bigoplus\limits_{n \geq 1} \big(I_n \otimes L^\infty(X_{\lambda, n}, \mu_{\lambda, n}) \big )\Big).
\end{equation}

Since all Hilbert spaces $\cH_\lambda$ are separable, all locally finite measure spaces  
$(X_{\lambda,n},\Omega_{\lambda,n},\mu_{\lambda,n})$ are standard, see the last statement 
in Theorem~\ref{t:abvn}. Now, for arbitrary $\lambda\in\Lambda$, 
we construct the standard measure space $(X_\lambda,\Omega_\lambda,\mu_\lambda)$ as the disjoint union of 
all standard measure spaces $(X_{\lambda,n},\Omega_{\lambda,n},\mu_{\lambda,n})$, for 
$n\in\NN\cup\{\infty\}$. More precisely,
\begin{equation}\label{e:xelam}
X_\lambda:=X_{\lambda,\infty}\sqcup\bigsqcup_{n\in\NN} X_{\lambda,n},\quad 
\Omega_\lambda:=\{B\subseteq X\mid B\cap X_{\lambda,n}\in\Omega_{\lambda,n}\mbox{ for all }n\in\NN\cup\{\infty\}\},
\end{equation}
and the measure $\mu_\lambda$ defined by
\begin{equation}\label{e:mulam}
\mu_\lambda(B)=\mu_{\lambda,\infty}(B\cap X_{\lambda,\infty})
+\sum_{n\in\NN}\mu_{\lambda,n}(B\cap X_{\lambda,n}),\quad B\in\Omega_\lambda.
\end{equation}
Then $\big ( X_\lambda, \Omega_\lambda, \mu_\lambda \big)$ is a standard measure space.  Fix $n \in \mathbb{N}$ and, for 
each $x \in X_{\lambda, n}$, define $\mathcal{G}_{x, n} := \ell^2_n$. Then, 
$\bG_{\lambda, n} := \{ \mathcal{G}_{x, n} = \ell^2_n \}_{x \in X_{\lambda, n}}$ and we obtain a Borel bundle of Hilbert spaces 
$(X_{\lambda, n} \ast \bG_{\lambda, n}, \pi_{\lambda, n})$, see Remark \ref{r:id}. For each $\lambda \in \Lambda$ and each
$n \in \mathbb{N} \cup \{ \infty \}$,  we identify 
\begin{equation} \label{e:le2en}
L^2 \big ( X_{\lambda, n} \ast \bG_{\lambda, n}, \mu_{\lambda, n} \big ) = \int^\oplus_{X_{\lambda, n}} \mathcal{G}_{x, n} 
\ \mathrm{d} \mu_{\lambda, n} 
= \ell^2_n \otimes L^2_\lambda \big ( X_n, \mu_n \big ) \cong L^{2}_{\ell^2_n, X_{\lambda, n}}\big ( X, \mu \big ), 
\end{equation}
where, $L^{2}_{\ell^2_n, X_{\lambda, n}}\big ( X, \mu \big )$ is defined as in \eqref{e:lefexet}.

For arbitrary $\lambda \in \Lambda$ and $x \in X_\lambda$, define $\mathcal{G}_{x, \lambda} := \ell^2_n$, where 
$n \in \mathbb{N} \cup \{ \infty \}$ is the unique number  such that $x \in X_{\lambda, n}$. Consider the bundle 
$\bG_\lambda := ( \mathcal{G}_{x, \lambda} )_{x \in \mathcal{X}_\lambda}$ of Hilbert spaces and observe that for each $x \in X$ 
the net $ (\mathcal{G}_{x, \lambda} )_{\lambda \in \Lambda_\lambda}$ is a strictly inductive system of Hilbert spaces, and define 
$\mathcal{G}_x := \varinjlim\limits_{\lambda \in \Lambda_\lambda} \mathcal{G}_{x, \lambda}$ to be the associated locally Hilbert 
space. Now in view of the construction of the standard measure space $\big ( X_\lambda, \Omega_\lambda, \mu_\lambda \big)$, 
Remark \ref{r:id}, Proposition~\ref{p:di} and Definition~\ref{d:di}, we get the existence of the direct integral of Hilbert space 
$L^2 \big ( X_{\lambda} \ast \bG_{\lambda}, \mu_{\lambda} \big ) 
=  \displaystyle \int^\oplus_{X_{\lambda}} \mathcal{G}_{x, \lambda} \; \mathrm{d} \mu_{\lambda}(x)$. Suppose $\bG 
:= ( \mathcal{G}_x )_{x \in X}$. Then by following the discussion before the definition of direct integral of locally Hilbert spaces 
and by  \eqref{e:le2en}, we get
\begin{align*}
L^2_\lambda \big (X \ast  \bG, \mu \big ) \cong L^2 \big ( X_{\lambda} \ast \bG_{\lambda}, \mu_{\lambda} \big ) &=  \displaystyle \int^\oplus_{X_{\lambda}} \mathcal{G}_{x, \lambda} \; \mathrm{d} \mu_{\lambda}(x) \\
&= L^2 \big ( X_{\lambda, \infty} \ast \bG_{\lambda, \infty}, \mu_{\lambda, \infty} \big ) \oplus \bigoplus\limits_{n \geq 1} L^2 \big ( X_{\lambda, n} \ast \bG_{\lambda, n}, \mu_{\lambda, n} \big ) \\
&= \big(\ell^2_\infty \otimes L^2 \big ( X_{\lambda, \infty}, \mu_{\lambda, \infty} \big )\big) \oplus \bigoplus\limits_{n \geq 1} 
\big(\ell^2_n \otimes L^2 \big ( X_{\lambda, n}, \mu_{\lambda, n} \big )\big) \\
&\cong L^{2}_{\ell^2_\infty, X_{\lambda, \infty}}\big ( X, \mu \big )  \oplus \bigoplus\limits_{n \geq 1} L^{2}_{\ell^2_n, X_{\lambda, n}}\big ( X, \mu \big ). 
\end{align*}

We claim that the family $\big\{  L^{2}_{\ell^2_\infty, X_{\lambda, \infty}}\big ( X, \mu \big ) 
\oplus \bigoplus\limits_{n \geq 1} L^{2}_{\ell^2_n, X_{\lambda, n}}\big ( X, \mu \big )  \big \}_{\lambda \in \Lambda}$ forms a 
strictly inductive system of Hilbert spaces.  To prove this, it is enough to show that, for a fixed $n \in \mathbb{N} \cup \{ \infty \}$, 
the family $\big \{ L^{2}_{\ell^2_n, X_{\lambda, n}}\big ( X, \mu \big )  \big \}_{\lambda \in \Lambda}$ forms a strictly inductive 
system of Hilbert spaces. Let $f \in L^{2}_{\ell^2_n, X_{\lambda, n}}\big ( X, \mu \big )$. Thus, by definition, 
$f \colon X \rightarrow \ell^2_n$ is such that 
\begin{equation*}
\supp(f) \subseteq X_{\lambda, n},  \; \; f\big|_{X_{\lambda, n}} \; \text{is measurable w.r.t.}\; (X_{\lambda, n}, \Omega_{\lambda, X_{\lambda, n}}),\;\; \int_{X_{\lambda, n}} \|f(x)\|_{\ell^2_n}^{2}\; d\mu_{\lambda}(x) < \infty.
\end{equation*} 
We have  $X_{\lambda, n} \subseteq X_{\delta, n}$, whenever $\lambda \leq \delta$. This implies $\supp(f) \subseteq X_{\delta, 
n}$. Now we show that $f\big|_{X_{\delta, n}}$ is measurable with respect to  $\Omega_{\delta, X_{\delta, n}}$. To 
see this, let $F \subseteq \ell^2_n$ be any measurable subset. Then
\begin{align*}
f\big|^{-1}_{X_{\delta, n}}(F) = f\big|^{-1}_{X_{\delta, n}}(F \setminus \{0\} \cup \{ 0 \}) &= f\big|^{-1}_{X_{\delta, n}}(F \setminus \{0\}) \cup f\big|^{-1}_{X_{\delta, n}}(\{0\}) \\
&= f\big|^{-1}_{X_{\lambda, n}}(F \setminus \{0\}) \cup f\big|^{-1}_{X_{\delta, n}}(\{0\}) \\
&= f\big|^{-1}_{X_{\lambda, n}}(F \setminus \{0\}) \cup X_{\delta, n} \setminus X_{\lambda, n} \cup \{ x \in X_{\lambda, n} \mid f(x) = 0  \}
\end{align*}
Thus, $f\big|^{-1}_{X_{\delta, n}}(F)$ is a measurable set of  $X_{\delta, n}$ and hence, 
$f\big|_{X_{\delta, n}}$ is measurable with respect to  $\Omega_{\delta, X_{\delta, n}}$. Lastly, as 
$\supp(f) \subseteq X_{\lambda, n} \subseteq X_{\delta, n}$, we have
\begin{equation*}
\int_{X_{\delta, n}} \|f(x)\|_{\ell^2_n}^{2}\; d\mu_{\delta}(x) = \int\limits_{X_{\lambda, n}} \|f(x)\|_{\ell^2_n}^{2}\; d\mu_{\lambda}(x) < 
\infty,\end{equation*}
hence $f \in L^{2}_{\ell^2_n}\big ( X_{\delta, n}, \mu_{\delta, n} \big )$. This shows that 
$\big \{ L^{2}_{\ell^2_n, X_{\lambda, n}}\big ( X, \mu \big )  \big \}_{\lambda \in \Lambda}$ forms a strictly inductive system of 
Hilbert spaces. As $n \in \mathbb{N} \cup \{ \infty \}$ was arbitrarily chosen, we conclude that the family 
\begin{equation*}
\big\{  L^{2}_{\ell^2_\infty, X_{\lambda, \infty}}\big ( X, \mu \big )  \oplus \bigoplus\limits_{n \geq 1} L^{2}_{\ell^2_n, X_{\lambda, n}}\big ( X, \mu \big )  \big\}_{\lambda \in \Lambda}
\end{equation*}
forms a strictly inductive system of Hilbert spaces. Now, for $\bG = ( \mathcal{G}_x )_{x \in X}$, we get
\begin{equation*}
L^2_{\mathrm{loc}}(X \ast \bG, \mu) = \displaystyle \int_X^{\oplus_\mathrm{loc}} \mathcal{G}_x \; \mathrm{d} \mu(x) := \varinjlim\limits_{\lambda \in \Lambda} L^{2}_{\ell^2_\infty, X_{\lambda, \infty}}\big ( X, \mu \big )  \oplus \bigoplus\limits_{n \geq 1} L^{2}_{\ell^2_n, X_{\lambda, n}}\big ( X, \mu \big ).
\end{equation*}
This settles all the statement from (i) through (v).

It only remains to show the existence of the locally unitary operator 
$U \colon \mathcal{H} \rightarrow L^2_{\mathrm{loc}}(X \ast \bG, \mu)$ satisfying the condition given in statement (v). Indeed, 
the locally unitary operator 
$U \colon \mathcal{H} \rightarrow L^2_{\mathrm{loc}}(X \ast \bG, \mu)$ is constructed in a natural fashion 
by putting together $U_\lambda$ for all $\lambda \in \Lambda$ and by using the building blocks  $U_{\lambda, n}$ for all 
$n \in \mathbb{N} \cup \{ \infty \}$, see \eqref{e:ulae}. 
Finally, the locally unitary operator $U$ given at \eqref{e:spiu} satisfies
\begin{equation*}
\mathcal{M} \ni Z \mapsto UZU^\ast \in  \mathcal{B}^{\mathrm{diag}}_{\mathrm{loc}} \big ( L^2_{\mathrm{loc}}(X \ast \bG, \mu)\big ),
\end{equation*}
which can be seen from \eqref{e:spiu} and Remark \ref{r:ampdiag}.
\end{proof}
\subsection*{Acknowledgment}
 The second named author sincerely thanks the Indian Institute of Science Education and Research (IISER) Mohali, India and the 
Theoretical Statistics and Mathematics Unit of the Indian Statistical Institute, Delhi Centre, for the financial support received 
through an Institute Postdoctoral Fellowship. The third named author would like to thank the Department of Science and 
Technology (DST), India for their support to the Department of Mathematical Science, IISER Mohali  for a financial support in the 
form of the FIST grant (File No. SR/FST/MS-I/2019/46(C)). 

\subsection*{Data Availability Statement} There is no data associated with this manuscript.

\subsection*{Declaration}
The authors have no conflicts of interest to declare that are relevant to the content of this article.

\end{document}